\DeclareMathOperator{\rk}{rk}
\DeclareMathOperator{\ord}{ord}
\theoremstyle{plain}
\newtheorem{thm}{Theorem}[section]
\newtheorem{assum}[thm]{Assumption}
\newtheorem{nota}[thm]{Notation}
\theoremstyle{definition}
\newtheorem{defn}{Definition}[section]
\newtheorem{prop}{Proposition}[section]
\newtheorem{lemma}{Lemma}[section]
\newtheorem{eg}{Example}[section]
\newtheorem{rmk}{Remark}[section]
\theoremstyle{examples}
\newtheorem{egs}{Examples}[section]
\newcommand{\ndot}{\raisebox{.4ex}{.}}
\def\norm#1{\mathopen{\|}#1\mathclose{\|}}
\title{Categorification of Harder-Narasimhan Theory via Slope Functions in Totally Ordered Sets  }
\author{ Yao Li}
\affil{School of Mathematical Sciences, Peking University, China}
\begin{document}
\maketitle
\begin{abstract}
We introduce a categorical construction of Harder-Narasimhan filtration via a slope method which does not need a degree function. With a theorem of existence and uniqueness of Harder-Narasimhan filtration in our categorical setting, we give a categorical interpretation of Stuhler-Grayson filtration in the case of non-necessarily Hermitian normed lattices.
\end{abstract}

\maketitle
\section{Introduction}
The notion of Narasimhan filtration was first introduced by Harder and Narasimhan \cite{MR364254} in the setting of vector bundles on a non-singular projective curve. Let $k$ be an algebraic closed field and $C$ be a non-singular projective curve over $k$, they proved that any non-zero vector bundle $E$ on $C$ admits a canonical filtration by vector subbundles
\[0=E_0\subsetneq E_1\subsetneq\ldots\subsetneq E_n=E,\]
where the subquotients $E_i/E_{i-1}$ are semi-stable vector bundles with strictly decreasing slopes. Later Harder-Narasimhan filtration has been generalized to the setting of pure sheaves over higher dimensional polarized projective varieties, see \cite[\S1.3]{MR2665168} for more details. Curiously, analogous constructions have also been  discovered in many other branches of mathematics.
 For example, in geometry of numbers, Stuhler \cite{MR424707} proposed a construction of Harder-Narasimhan filtration for Euclidean lattices and compared it to the reduction theory of positive quadratic forms. Later Stuhler's result has been generalized in various settings of arithmetic geometry by Grayson \cite{MR780079}, Bost \cite{MR1423622}, Gaudron \cite{MR2431505} etc. Moreover, Faltings and
  W\"{u}stholz \cite{MR1253191} developed a Harder-Narasimhan theory on filtered vector spaces and applied it to diophantine approximation. Recently, their results on Harder-Narasimhan theory were expanded by Grieve \cite{G2022}. In the framework of arithmetic geometry over a function field, Lafforgue showed that a family of generalised chtoucas possesses a canonical filtration of Harder-Narasimhan type (see \cite{MR1600006} th\'eor\`eme 5). In arithmetic geometry of local fields, similar constructions exist for F-crystals (Dieudonn{\'e}-Manin filtration, see \cite{MR0157972} Chapter II, see also \cite{MR563463}), 
  filtered $\varphi$-modules (see \cite{MR1293972} and \cite[\S5.5.2.4]{MR3917141}), and also $\varphi$-modules over Robba ring \cite{MR2493220}. Moreover, Fargues \cite{MR2673421} developed a Harder-Narasimhan theory for finite flat group schemes over a valuation ring of unequal characteristic. In the theory of linear codes, a construction of Harder-Narasimhan filtration was proposed by Randriambololona \cite{MR3912956}.

These results in various context of quite different nature have motivated several categorical constructions of Harder-Narasimhan filtration. We can mention for example Lafforgue's construction (see \cite[\S II.2]{MR1600006}) and Bridgeland's construction \cite{MR2373143} on abelian categories, Chen's approach \cite{MR2559690} in the framework of exact categories equipped with a geometric structure, Andr\'e's approach \cite{MR2571693} for proto-abelian categories, and that of Cornut \cite{MR3928223} for modular lattices. All these categorical constructions have a common nature, 
which we resume as follows. One begins with a category $\mathcal C$ with a zero object on which two real valued functions $\deg(\ndot)$ and $\operatorname{rk}(\ndot)$ are given, where the function $\operatorname{rk}(\ndot)$ is assumed to take positive values on non-zero objects. These functions induce a slope function
\[\mu(\ndot)=\frac{\deg(\ndot)}{\operatorname{rk}(\ndot)}\]
on non-zero objects, which allows to define the semi-stability of objects in $\mathcal C$ : a non-zero object $Y$ of $\mathcal C$ is said to be semi-stable if, for any strict monomorphism $X\rightarrowtail Y$ with $X$ non-zero, one has $\mu(X)\leqslant\mu(Y)$. The functions $\deg(\ndot)$ and $\operatorname{rk}(\ndot)$ are supposed to be additive with respect to short exact sequences and verify a super-parallelogram condition, namely, for any object $Y$ in $\mathcal C$ and all strict monomorphisms $X_1\rightarrowtail Y$ and $X_2\rightarrowtail Y$, the following relations hold:
\begin{gather*}\deg(X_1+X_2)+\deg(X_1\cap X_2)\geqslant\deg(X_1)+\deg(X_2),\\
\operatorname{rk}(X_1+X_2)+\operatorname{rk}(X_1\cap X_2)=\operatorname{rk}(X_1)+\operatorname{rk}(X_2),\end{gather*}
where $X_1\cap X_2$ denotes the fiber product $X_1\times_YX_2$ and $X_1+X_2$ denotes the cofiber coproduct of $X_1$ and $X_2$ over $X_1\cap X_2$. Then, under these conditions, it can be shown that, for any non-zero object $Y$ of $\mathcal C$, there exists a  unique sequence of strict monomorphisms
\[\xymatrix{0=Y_0\ar@{ >->}[r]^-{f_1}&Y_1\ar@{ >->}[r]^-{f_2}&\;\cdots\;\ar@{ >->}[r]^-{f_n}&Y_n=Y},\]
such that all subquotients $Y_i/Y_{i-1}:=\operatorname{Coker}(f_i)$ are non-zero and semi-stable objects, and verify the following inequalities with respect to slopes:
\[\mu(Y_1/Y_0)>\ldots>\mu(Y_n/Y_{n-1}).\]

Curiously, in a recent work of Chen and Moriwaki \cite{Chen_Moriwaki}, an extension of semi-stability condition to the case of normed lattices (or more generally, adelic vector bundles) has been proposed so that there does exist, for each non-zero normed lattice, a Harder-Narasimhan type filtration with semi-stable subquotients and strictly decreasing slopes. 
Instead of the naive generalization  by comparing the slope of the normed lattice and those of its sublattices, they defined a non-zero normed lattice $\overline E$ to be semi-stable if and only if, for any non-zero sublattice $\overline F$ of $\overline E$, the inequality $\mu_{\min}(\overline F)\leqslant\mu_{\min}(\overline E)$ holds. Note that this condition is equivalent to the classical semi-stability condition when the norm $\|\ndot\|$ is Euclidean. 
Then they established a Harder-Narasimhan type theorem asserting that, for any normed lattice $\overline E$, there exists a unique sequence
\[0=E_0\subsetneq E_1\subsetneq\ldots\subsetneq E_n=E\]
of sublattices of $E$, such that all subquotient normed lattices $\overline{E_i/E_{i-1}}$ are semi-stable, and verify the following inequalities:
\[\mu_{\min}(\overline{E_1/E_0})>\ldots>\mu_{\min}(\overline{E_{n}/E_{n-1}}).\]
We refer the readers to \cite{Chen_Moriwaki} Theorem 4.3.58 for more details. This set-up of Harder-Narasimhan filtration is not included in any of the categorical constructions cited above. The main reason is that the degree function in the general normed lattices case is not additive with respect to short exact sequences, and hence does not verify the common requirement of these categorical models. This intriguing observation motivates the current work.

Note that the existence and uniqueness of Harder-Narasimhan filtration is  rather a statement concerning slopes of subquotients than that of the degree function. We refer the readers to \cite[Proposition 5.14]{MR2431505} for an explicit link between successive slopes and a minimax of minimal slopes of subquotients. In this paper, we try to give a conceptual explanation of this phenomenon in proposing a categorical formulation of Harder-Narasimhan theory based on slope functions. This choice of framework makes it possible to consider a general slope function taking values in an ordered set.  
Moreover, we do not require any compatibility of our slope function with respect to short exact sequence. Even the rank function is not needed (replaced by chain conditions of strict sub-object). Actually, we have the following result.
\begin{thm}{\label{Thm: in introduction}}
Let $E$ be a non-zero object of a small proto-abelian category $\mathcal{C}$ satisfying Noetherian and Artinian conditions on the chains of strict sub-objects of $E$, $\mu:S^0_E\rightarrow \Lambda$ be a slope function, where $\Lambda$ is a complete totally ordered set. There exists a unique filtration of $E$,
 \[0=E_0\rightarrow E_1\rightarrow \cdots \rightarrow E_n=E,\] such that each $E_{i+1}/E_i$ is semi-stable and  $\hat{\mu}_1> \hat{\mu}_2>\cdots >\hat{\mu}_n$, where  $\hat{\mu}_i=\mu_{\min}(E_i/E_{i-1})$.
\end{thm}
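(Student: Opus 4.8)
The strategy is the classical one: isolate the first term of the filtration — a ``maximal destabilising subobject'' of $E$ — establish its existence, uniqueness, semistability and maximality, and then iterate. The whole argument reduces to the case of $E$ itself, since every subquotient of $E$ again carries the Noetherian and Artinian conditions on its lattice of strict subobjects and, by restriction, a slope function valued in $\Lambda$.

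First I would record a few formal consequences of the axioms. Specialising the slope inequality \eqref{Equ:slope inequality} to the case where the subobject on the upper row is the fibre product $W_1\cap W_2'$ — always admissible: given a subobject $X/W'$ of a subquotient $W/W'$ of $E$, take $W_2=W$, $W_2'=W'$, $W_1=X$, so that $W_1\cap W_2'=X\cap W'=W'$ — yields the monotonicity principle \textbf{(SL)}: $\mu$ does not increase when one passes to a subobject of a fixed subquotient of $E$. For a subquotient $T=W/W'$ of $E$ I set $\mu_{\min}(T):=\inf_{W'\subseteq V\subsetneq W}\mu(W/V)$ (the infimum exists because $\Lambda$ is complete, and $\mu_{\min}(T)\leqslant\mu(T)$) and $\mu_{\max}(T):=\sup\{\mu_{\min}(S):0\neq S\subseteq T\}$, and recall that $T$ is semistable exactly when $\mu_{\min}(T)=\mu_{\max}(T)$, i.e.\ when $\mu_{\min}(S)\leqslant\mu_{\min}(T)$ for every non-zero subobject $S$ of $T$. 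Two elementary facts are needed: (i) $\mu_{\min}$ is non-decreasing along quotients (its defining infimum ranges over a smaller index set), so $\mu_{\min}(Q)\geqslant\mu_{\min}(T)$ for every non-zero quotient $Q$ of $T$; and (ii) by the remark above the problem localises, so it suffices to construct the first term $E_1$ with the right properties. Next comes the \textbf{key lemma}: if $0\to S\to\widetilde S\to S'\to 0$ is a short exact sequence of subquotients of $E$ with $\mu_{\min}(S)\geqslant\lambda$ and $\mu_{\min}(S')\geqslant\lambda$, then $\mu_{\min}(\widetilde S)\geqslant\lambda$. Indeed, a non-zero quotient $Q$ of $\widetilde S$ either receives a non-zero image of $S$ — a quotient of $S$, hence of $\mu\geqslant\lambda$, and a subobject of $Q$, so $\mu(Q)\geqslant\lambda$ by \textbf{(SL)} — or else $S$ maps to $0$ in $Q$, so $Q$ is a non-zero quotient of $S'$ and again $\mu(Q)\geqslant\lambda$; thus $\mu_{\min}(\widetilde S)\geqslant\lambda$. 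Combined with (i) and the isomorphism $(F+G)/F\cong G/(F\cap G)$, this gives the \emph{sum lemma}: for each $\lambda\in\Lambda$ the collection $\{S\subseteq E:\mu_{\min}(S)>\lambda\}\cup\{0\}$, and likewise with $\geqslant$, is stable under $F,G\mapsto F+G$ in the lattice of strict subobjects of $E$.

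The construction of $E_1$ is where I expect the \textbf{main obstacle} to lie; concretely, the one delicate point is that the supremum $\mu_{\max}(E)$ is attained. By the sum lemma together with the Noetherian condition, for each $\lambda$ the set $\{S\subseteq E:\mu_{\min}(S)>\lambda\}$, if it contains a non-zero element, has a greatest element $G_\lambda$; the $G_\lambda$ decrease as $\lambda$ grows, so by the Artinian condition the family $(G_\lambda)_{\lambda<\mu_{\max}(E)}$ is eventually constant, equal to some non-zero $G^\ast$. Since then $\mu_{\min}(G^\ast)>\lambda$ for all $\lambda$ in a final segment of $\{\lambda<\mu_{\max}(E)\}$, completeness of $\Lambda$ — with a short case split according to whether $\mu_{\max}(E)$ has an immediate predecessor — forces $\mu_{\min}(G^\ast)\geqslant\mu_{\max}(E)$, hence $\mu_{\min}(G^\ast)=\mu_{\max}(E)$, so the supremum is attained. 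A further application of the sum lemma and Noetherianity then provides the greatest subobject $E_1\subseteq E$ with $\mu_{\min}(E_1)=\mu_{\max}(E)$. It is semistable (a non-zero $S\subseteq E_1$ is a subobject of $E$, so $\mu_{\min}(S)\leqslant\mu_{\max}(E)=\mu_{\min}(E_1)$) and it contains every subobject $F$ of $E$ with $\mu_{\min}(F)\geqslant\mu_{\max}(E)$. This is the only place where both chain conditions and the completeness of $\Lambda$ are used together.

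Finally, existence and uniqueness. Iterating the construction — $E_1$ as above, then applying it to $E/E_1$, $E/E_2$, \dots\ to get $E_{i+1}/E_i$ as the maximal destabilising subobject of $E/E_i$ — gives a strictly increasing chain which terminates by the Noetherian condition, with every $E_i/E_{i-1}$ semistable and $\hat{\mu}_i:=\mu_{\min}(E_i/E_{i-1})=\mu_{\max}(E/E_{i-1})$. For strictness: if $\hat{\mu}_{i+1}\geqslant\hat{\mu}_i$, the key lemma applied to $0\to E_i/E_{i-1}\to E_{i+1}/E_{i-1}\to E_{i+1}/E_i\to 0$ with $\lambda=\hat{\mu}_i$ gives $\mu_{\min}(E_{i+1}/E_{i-1})\geqslant\hat{\mu}_i$, whence by maximality of $E_i/E_{i-1}$ inside $E/E_{i-1}$ one gets $E_{i+1}/E_{i-1}=E_i/E_{i-1}$, i.e.\ $E_{i+1}=E_i$, a contradiction; so $\hat{\mu}_1>\hat{\mu}_2>\cdots>\hat{\mu}_n$. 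For uniqueness, given any admissible filtration $0=E_0'\subsetneq\cdots\subsetneq E_m'=E$, I would first prove, by induction on $m$, that $\mu_{\min}(F)\leqslant\mu_{\min}(E_1')$ for every non-zero subobject $F\subseteq E$: if $F\subseteq E_{m-1}'$ apply the inductive hypothesis to $E_{m-1}'$, and otherwise the non-zero quotient $F/(F\cap E_{m-1}')$ embeds into the semistable $E/E_{m-1}'$, so by (i) and the strict decrease $\hat{\mu}_m'<\hat{\mu}_1'$ one gets $\mu_{\min}(F)\leqslant\hat{\mu}_m'<\hat{\mu}_1'$. Consequently $\mu_{\min}(E_1')=\mu_{\max}(E)$, so $E_1'\subseteq E_1$; conversely, if the image of $E_1$ in $E/E_1'$ were non-zero it would be at once a quotient of the semistable $E_1$ (hence of $\mu_{\min}\geqslant\hat{\mu}_1$, by (i)) and a subobject of $E/E_1'$ (hence of $\mu_{\min}\leqslant\hat{\mu}_2'<\hat{\mu}_1$, by the same estimate applied to $E/E_1'$), which is absurd; so $E_1'=E_1$, and induction on the length of the constructed filtration finishes the argument. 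Throughout, the facts used about strict monomorphisms and epimorphisms, fibre and cofibre products, and the isomorphism theorems in the lattice of strict subobjects are the standard ones for proto-abelian categories.
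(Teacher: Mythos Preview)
Your overall strategy --- establish a sum lemma for $\mu_{\min}$, construct a maximal destabilising subobject $E_1$, iterate, then prove uniqueness by identifying the first term --- is the same as the paper's. Two points deserve comment.

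First, a pair of imprecisions. Your key lemma invokes ``the image of $S$ in $Q$, a quotient of $S$'', and your sum lemma invokes the isomorphism $(F+G)/F\cong G/(F\cap G)$. Neither holds literally in a proto-abelian category: image and coimage are linked only by a bimorphism (Proposition~\ref{epimonic}), and the second isomorphism theorem fails in general. What you actually need, and what does hold, is the inequality $\mu(G/(F\cap G))\leqslant\mu((F+G)/F)$ --- this is precisely the (non-strong) slope inequality, with both the cartesian and the sum conditions met --- and its $\mu_{\min}$ analogue. The paper avoids the issue entirely by proving the sum lemma (Proposition~\ref{main}) directly in the lattice of strict subobjects, never naming an image or appealing to an isomorphism theorem.

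Second, the construction of $E_1$ and the uniqueness argument are genuinely different in detail. The paper (Lemma~\ref{main2}) builds a finite descending chain $E=W_0\supsetneq W_1\supsetneq\cdots\supsetneq W_k$ by repeatedly choosing a subobject maximal among those of strictly larger $\mu_{\min}$: Noetherianity produces each $W_i$, Artinianity terminates the chain at a semistable $W_k$, and the sum lemma then shows $W_k$ absorbs every $G$ with $\mu_{\min}(G)\geqslant\mu_{\min}(W_k)$. Your route --- the decreasing family $(G_\lambda)$, stabilised by Artinianity, with completeness of $\Lambda$ forcing $\mu_{\min}(G^\ast)=\mu_{\max}(E)$ --- is valid and leans more explicitly on the order structure of $\Lambda$. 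For uniqueness, the paper (Theorem~\ref{Uniqueness in set}) compares two Harder--Narasimhan filtrations directly at their first index of disagreement, using only the ordinary slope inequality; you instead characterise $E_1$ intrinsically as the greatest subobject with $\mu_{\min}=\mu_{\max}(E)$ and show any admissible filtration must begin with it. Both work; the paper's uniqueness is slightly sharper in that it is decoupled from the existence construction.
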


If we do not require the uniqueness, the value of the slope function could be taken in a partially ordered set. We have the following result.

\begin{thm}\label{Thm: introdution 2}
Let $E$ be a non-zero object of a small proto-abelian $\mathcal{C}$, which satisfies Noetherian and Artinian conditions on the chains of strict sub-objects of $E$, and $\mu:S^0_E\rightarrow \Lambda$ be a slope function, where $\Lambda$ is a partially ordered set which admits a greatest element and any subset of $\Lambda$ admits a infimum.
There exists a filtration
\[\xymatrix{0=E_0\ar@{ >->}[r]& E_1\ar@{ >->}[r]&\cdots\cdots\ar@{ >->}[r]& E_n=E,}\]
which satisfies the following properties:
\begin{enumerate}
\item for any $i\in\{1,\ldots,n\}$, $E_{i}/E_{i-1}$ is semi-stable,
\item for all $i\in\{1,\ldots,n-1\}$, $\mu_{\min}(E_i/E_{i-1})\not\leqslant\mu_{\min}(E_{i+1}/E_{i})$.
\end{enumerate}
\end{thm}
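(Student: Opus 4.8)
The plan is to peel off a suitable ``first piece'' $E_1$ and then induct on the length. By the chain conditions $E$ has finite length $\ell(E)$, lengths are additive along short exact sequences, and so I would induct on $\ell(E)$. If $E$ is semi-stable, take $n=1$ and (2) is vacuous. If not, it suffices to produce a nonzero strict subobject $E_1\rightarrowtail E$ which is (i) semi-stable and (ii) satisfies $\mu_{\min}(E_1)\not\leqslant\mu_{\min}(G/E_1)$ for every strict $G$ with $E_1\subsetneq G\subseteq E$. Indeed, applying the inductive statement to $E/E_1$ (whose length is $<\ell(E)$ since $E_1\neq 0$) gives a filtration $0=E_1/E_1\rightarrowtail\cdots\rightarrowtail E_n/E_1=E/E_1$ with semi-stable subquotients and the required $\not\leqslant$-relations; pulling it back along $E\twoheadrightarrow E/E_1$ and prepending $E_1$ yields a filtration of $E$ for which (i) handles semi-stability of the bottom subquotient and (ii) with $G=E_2$ supplies the one missing relation $\mu_{\min}(E_1/E_0)\not\leqslant\mu_{\min}(E_2/E_1)$.

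To build $E_1$ I would imitate the selection of the destabilizing subobject in the proof of Theorem~\ref{Thm: in introduction}, replacing ``greatest'' by ``maximal''. Let $\mathcal S$ be the set of nonzero semi-stable strict subobjects of $E$; it is non-empty, since a minimal nonzero strict subobject of $E$ (existing by the Artinian hypothesis) has no nonzero proper strict subobject and is therefore semi-stable. Choose a \emph{maximal} element $\lambda$ of the subset $\{\mu_{\min}(F):F\in\mathcal S\}\subseteq\Lambda$, and then let $E_1$ be a member of $\mathcal S$ with $\mu_{\min}(E_1)=\lambda$ that is maximal with respect to inclusion. The latter choice is possible by the Noetherian hypothesis; for the former one uses the ascending chain condition through the behaviour of $\mu_{\min}$ under the ``sum'' of strict subobjects: the parallelogram/seesaw estimate together with the fact that $(F_1+F_2)/F_1$ is a strict quotient of $F_2$ give $\mu_{\min}(F_1+F_2)\geqslant\inf\{\mu_{\min}(F_1),\mu_{\min}(F_2)\}$ (both ingredients already occur in the proof of Theorem~\ref{Thm: in introduction}), so an infinite strictly increasing chain of values $\mu_{\min}(F_k)$ would, via the partial sums $F_1+\cdots+F_k$, yield an infinite strictly ascending chain of strict subobjects of $E$.

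Property (ii) should then be extracted from the double maximality of $E_1$ by means of the ``vanishing'' principle that, as in the proof of Theorem~\ref{Thm: in introduction}, follows from \eqref{Equ:slope inequality}: a morphism $\phi\colon B\to C$ between semi-stable subquotients of $E$ factors through a strict image which is simultaneously a strict quotient of $B$ and a strict subobject of $C$, so $\mu_{\min}(B)\leqslant\mu_{\min}(C)$ unless $\phi=0$. If $\mu_{\min}(E_1)\leqslant\mu_{\min}(G/E_1)$ for some $G\supsetneq E_1$, then applying the inductive statement to $G/E_1$ produces a semi-stable subobject of $G/E_1$ whose $\mu_{\min}$ is maximal among those of semi-stable subobjects of $G/E_1$; combining this with the seesaw estimate and the vanishing principle should exhibit, inside $E$, a semi-stable strict subobject strictly containing $E_1$ whose $\mu_{\min}$ is either strictly above $\lambda$ or again equal to $\lambda$, contradicting respectively the maximality of $\lambda$ or of $E_1$. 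The recursion then terminates by the induction on $\ell(E)$.

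The two steps I expect to be the real obstacles are exactly the points where the total order of $\Lambda$ entered the proof of Theorem~\ref{Thm: in introduction}: first, the existence of a \emph{maximal} value $\lambda$ --- there one simply takes an attained supremum, whereas here the argument has to be routed through the ascending chain condition and the non-additive behaviour of $\mu$; and second, converting the double maximality of $E_1$ into the weakened relation (ii), which is precisely where both the uniqueness of the filtration and the strict decrease of successive $\mu_{\min}$ must be given up, leaving only ``$\not\leqslant$''.
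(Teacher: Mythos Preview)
Your construction of $E_1$ diverges from the paper's, and the divergence creates real gaps. The argument that a maximal $\lambda$ exists is broken: from semi-stable $F_1,F_2,\ldots$ with strictly increasing $\mu_{\min}(F_k)$ you form $G_k=F_1+\cdots+F_k$, but Proposition~\ref{main} only gives $\mu_{\min}(G_k)\geqslant\mu_{\min}(F_1)$, and nothing prevents $F_{k+1}\subseteq G_k$ (hence $G_{k+1}=G_k$); the Noetherian hypothesis constrains chains of subobjects, not chains of values in $\Lambda$. More seriously, even granting a maximal $\lambda$, your ``doubly maximal'' $E_1$ lacks the property the paper actually uses to obtain the $\not\leqslant$ relation, namely that \emph{every} nonzero $G$ with $\mu_{\min}(G)\geqslant\mu_{\min}(E_1)$ satisfies $G\subseteq E_1$ (Theorem~\ref{existence}\ref{Item: mu min is maximal element}). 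Your $E_1$ need not absorb a semi-stable $F$ whose $\mu_{\min}(F)$ is \emph{incomparable} to $\lambda$. The ``vanishing principle'' does not rescue this: in the partial-order setting semi-stability of $C$ only says $\mu_{\min}(C)\not<\mu_{\min}(F)$ for nonzero $F\subseteq C$, which does not yield $\mu_{\min}(F)\leqslant\mu_{\min}(C)$; and in a proto-abelian category image and coimage of a morphism need not coincide, so the factorisation you invoke is not available as stated.

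The paper builds $E_1$ by a different mechanism (Lemma~\ref{main2}): a \emph{descending} chain $E=W_0\supsetneq W_1\supsetneq\cdots\supsetneq W_k=E_1$, where each $W_{i+1}\subseteq W_i$ is chosen (via Noetherian) so that $\mu_{\min}(W_{i+1})>\mu_{\min}(W_i)$ and no strictly larger subobject of $W_i$ has $\mu_{\min}\geqslant\mu_{\min}(W_{i+1})$; the chain terminates by the Artinian hypothesis with $W_k$ semi-stable. An induction on $i$ using Proposition~\ref{main} then shows that every $G$ with $\mu_{\min}(G)\geqslant\mu_{\min}(W_k)$ lies in each $W_i$, hence in $E_1$: this is the absorbing property. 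With it in hand, the paper derives the $\not\leqslant$ relation by a direct computation rather than any vanishing argument: assuming $\mu_{\min}(E_1)\leqslant\mu_{\min}(E_2/E_1)$, one shows for every proper $U\subsetneq E_2$ that $\mu_{\min}(E_1)\leqslant\mu(E_2/U)$ (splitting into the cases $E_1\subseteq U$ and $E_1\not\subseteq U$, the latter using the strong slope inequality applied to $E_1/(U\cap E_1)$), whence $\mu_{\min}(E_1)\leqslant\mu_{\min}(E_2)$, contradicting absorption since $E_2\not\subseteq E_1$. The missing idea in your proposal is precisely this absorbing property and the descending-chain construction that delivers it.
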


Theorem \ref{Thm: in introduction} and \ref{Thm: introdution 2} are proposed in detail in Theorem \ref{existence} and \ref{Uniqueness}. Firstly, we explain in an example what the classical construction is and why we need to consider the minimal slope $\mu_{\min}$. Later, we will explain what we do to prove our main theorem.

Let us illustrate the classical  construction by the example of Euclidean lattices. We call \emph{Euclidean lattice} any pair $\overline E=(E,\norm{\ndot})$, where $E$ is a finite generated free abelian group and $\norm{\ndot}$ is a Euclidean norm on the real vector space $E_{\mathbb R}:=E\otimes_{\mathbb Z}\mathbb R$. The Euclidean lattices form a proto-abelian category, where a morphism from a Euclidean lattice $\overline E$ to another one $\overline F$ is by definition a group homomorphism $f:E\rightarrow F$ such that $f_{\mathbb R}:E_{\mathbb R}\rightarrow F_{\mathbb R}$ has an operator norm $\leqslant 1$. A morphism $f:\overline E\rightarrow\overline F$ is a strict monomorphism if and only if $f$ is an injective group homomorphism satisfying $F/E$ is free and the norm of $\overline E$ is the restriction of that of $\overline F$. It is a strict epimorphism if and only if $f$ is a surjective  group homomorphism and the norm of $\overline F$ is the quotient norm of that of $\overline E$.

Note that a rank function $\operatorname{rk}(\ndot)$ is naturally defined on the category of Euclidean lattices, which sends $\overline E$ to the rank of $E$ over $\mathbb Z$. Recall that the Arakelov degree of a Euclidean lattice is defined to be the opposite logarithm of the covolume of the lattice, namely
\begin{equation}\label{Equ: degree}\deg(E,\norm{\ndot}):=-\ln\norm{e_1\wedge\cdots\wedge e_r}_{\det},\end{equation}
where $(e_i)_{i=1}^r$ is a basis of $E$ over $\mathbb Z$ (the value of $\deg(E,\norm{\ndot})$ does not depend on the choice of the lattice basis), and $\norm{\ndot}_{\det}$ is the determinant norm on the exterior space $\Lambda^r(E_{\mathbb R})$, defined as
\[\|\eta\|_{\det}:=\inf_{\begin{subarray}{c}(s_i)_{i=1}^r\in E_{\mathbb R}^r\\
\eta=s_1\wedge\cdots\wedge s_r
\end{subarray}}\norm{s_1}\cdots\norm{s_r}.\]
It can be shown that the degree function $\deg(\ndot)$ is additive with respect to short exact sequences. In other words, if $\overline E$, $\overline F$ and $\overline G$ are three Euclidean lattices and if
\[\xymatrix{0\ar[r]&E\ar[r]&F\ar[r]&G\ar[r]&0}\]
is a short exact sequence of abelian groups such that the norm of $\overline E$ is the restriction of that of $\overline F$, and the norm of $\overline G$ is the quotient norm of that of $\overline F$, then the following equality holds:
\begin{equation}\label{Equ: degree exact sequence}\deg(\overline F)=\deg(\overline E)+\deg(\overline G).\end{equation}
Moreover, if $\overline E$ is a Euclidean lattice and if $E_1$ and $E_2$ are subgroups of $E$, then the inequality
\begin{equation}\label{Equ: concavity}\deg(\overline{E_1+E_2})+\deg(\overline{E_1\cap E_2})\geqslant\deg(\overline E_1)+\deg(\overline E_2)\end{equation}
holds, where on subgroups of $E$ we consider restricted norms (we refer the readers to \cite{MR424707} Proposition 2 for a proof). Joint with the classical relation
\[\operatorname{rk}(E_1+E_2)+\operatorname{rk}(E_1\cap E_2)=\operatorname{rk}(E_1)+\operatorname{rk}(E_2),\]
we may interpret this inequality in a geometric way as follows. Let
\begin{gather*}(r_0,d_0):=(\operatorname{rk}(E_1\cap E_2),\deg(\overline{E_1\cap E_2})),\\ (r_1,d_1):=(\operatorname{rk}(E_1),\deg(\overline{E_1})),\quad
(r_2,d_2):=(\operatorname{rk}(E_2),\deg(\overline{E_2})).
\end{gather*}
Let $(r_3,d_3)$ be the point such that $r_3=\operatorname{rk}(E_1+E_2)$ and that $(r_i,d_i)$ with $i\in\{0,1,2,3\}$, form the vertices of a parallelogram, then the inequality
\[\deg(\overline{E_1+E_2})\geqslant d_3\]
holds. Then it can be shown  that any Euclidean lattice $\overline E$ admits a unique non-zero sublattice $\overline E_{\operatorname{des}}$, such that, firstly,
\[\mu(\overline{E}_{\operatorname{des}})=\max_{0\neq \overline F\subseteq \overline E}\mu(\overline F),\]
where $\overline F$ runs over the set of non-zero sublattices of $\overline E$, secondly, for any non-zero sublattice $\overline{F}$ of $\overline{E}$ satisfying $\mu(\overline F)=\mu(\overline E_{\operatorname{des}})$, one has $F\subseteq E_{\operatorname{des}}$. Note that $\overline E$ is semi-stable if and only if
\[\overline{E}_{\operatorname{des}}=\overline E.\]
Therefore, $\overline{E}_{\operatorname{des}}$ is called the destabilizing sublattice of $\overline E$. The Harder-Narasimhan filtration
\[0=E_0\subsetneq E_1\subsetneq\ldots\subsetneq E_n=E\]
can be constructed in a recursive way such that
\[\overline{E_i/E_{i-1}}=(\overline{E/E_{i-1}})_{\operatorname{des}}.\]
Note that each subquotient $\overline{E_i/E_{i-1}}$ is a non-zero semi-stable Euclidean lattice. Moreover, the following inequalities hold:
\[\mu(\overline{E_1/E_0})>\ldots>\mu(\overline{E_n/E_{n-1}}).\]
The first term $\mu(\overline{E_1/E_0})$ and the last term $\mu(\overline{E_n/E_{n-1}})$ in the above inequalities are called maximal slope and minimal slope of $\overline E$, denoted by $\mu_{\max}(\overline E)$ and $\mu_{\min}(\overline E)$ respectively. These invariants have more specific interpretation: the maximal slope is the maximal value of slopes of sublattices, and the minimal slope is the minimal value of slopes of quotient lattices. By the additivity of the degree function with respect to short exact sequences, the following equality holds for any non-zero Euclidean lattices,
\[\mu_{\min}(\overline E{}^\vee)=-\mu_{\max}(\overline E).\]
Moreover, $\overline E$ is semi-stable if and only if $\mu(\overline E)=\mu_{\max}(\overline E)$, or equivalently $\mu(\overline E)=\mu_{\min}(\overline E)$.

In Arakelov geometry, the above construction was generalized to adelic vector bundles by Gaudron \cite{MR2431505}, allowing for example lattices in a general normed vector space. By using Harder-Narasimhan polygons, it is formally possible to extend the definition of successive slopes in this framework. For simplicity, we describe the construction in the setting of \emph{normed lattices}, namely a couple $\overline E=(E,\norm{\ndot})$ consisting of a finite generated free abelian group, equipped with a norm on $E_{\mathbb R}$. Note that the degree function is defined in the same way as in \eqref{Equ: degree}. 
However, in general the relations \eqref{Equ: degree exact sequence} is no longer true. We give an example. Consider subgroup $E$ of additive group $\mathbb{C}$ generated by $1$ and $i$. We canonically identify $\mathbb{C}$ with $\mathbb{R}^2$ and equip it with $\ell^1$-norm.
Consider the norm lattice $\overline E=(E,\|\|_{\ell^1})$. Let
$F$ be the subgroup of $E$ generated by $1+i$ and $\overline F$ be a sublattice of $\overline E$. Then
\[\deg(\overline F)+\deg(\overline {E/F})=-\ln2-\ln1=-\ln2.\]
However, $\deg(\overline E)=0$.

Geometrically, we could consider the convex hull of the points in the plane $\mathbb R^2$ of coordinates $(\operatorname{rk}(F),\deg(\overline F))$, where $\overline F$ runs over the set of all sublattices of $\overline E$. Then the upper boundary of this convex hull is the graph of a concave and piecewise affine function $P_{\overline E}$, called \emph{Harder-Narasimhan polygon} of $\overline E$. 
Moreover, the abscissas where the function changes slopes are integers between $0$ and $\operatorname{rk}(E)$. In the case where the norm $\norm{\ndot}$ is Euclidean, the successive slopes of the Harder-Narasimhan polygon $P_{\overline E}$ are equal to those of successive subquotients of the Harder-Narasimhan filtration of $\overline E$. However, if $\norm{\ndot}$ is not Euclidean, in general it is not possible to construct a  sequence of sublattices of $\overline E$ with semi-stable subquotients corresponding to the same successive slopes of $P_{\overline E}$, 
which makes Harder-Narasimhan filtration special in this case. Let us illustrate this phenomenon by an example as follows. Let $k$ be a real number such that $k>\sqrt{2}$. We consider the vector space $\mathbb{R}^2$ equipped with the norm $\|\ndot\|_k$ defined as follows:
 For any $(a,b)\in \mathbb{R}^2$,
 \[\|(a,b)\|_k=(\max{\{a,kb,0\}}^2+\min{\{a,kb,0\}}^2)^\frac{1}{2}.\]
Consider the lattice in $\mathbb R^2$ generated by $e_1=(1,0)$ and $e_2=(0,1)$. One has
\[\|e_1\wedge e_2\|_{k,\det}=\frac{\|ke_1+e_2\|_k\cdot\|e_1-k^{-1}e_2\|_k}{2}=\frac{k}{\sqrt{2}},\]
which shows that the Arakelov degree of $(\mathbb Z^2,\|\ndot\|_k)$ is $\frac 12\log(2)-\log(k)$. Moreover, $(\pm 1,0)$ are non-zero lattice points having the smallest norm $1$ with respect to $\|\ndot\|_k$. Therefore, $\overline{\mathbb Ze_1}$ is the only sublattice of $(\mathbb Z^2,\|\ndot\|_k)$ whose Arakelov degree (which is zero) coincides with the maximal slope of $(\mathbb Z^2,\|\ndot\|_k)$. In particular, the last slope of the Harder-Narasimhan polygon of $(\mathbb Z^2,\|\ndot\|_k)$ is $\frac 12\log(2)-\log(k)$. However, the quotient norm $\|\ndot\|_{k,\operatorname{quot}}$ on $\mathbb Z^2/\mathbb Ze_1$ satisfies
\[\|[e_2]\|_{k,\operatorname{quot}}=\inf_{\lambda\in\mathbb R}\|e_2+\lambda e_1\|=\inf_{\lambda\in\mathbb R}\Big(\max\{\lambda,k,0\}^2+\min\{\lambda,k,0\}^2\Big)^{\frac 12}=k.\]
Therefore, the Arakelov degree of $(\mathbb Z^2/\mathbb Ze_1,\|\ndot\|_{k,\operatorname{quot}})$ is $-\log(k)$, which does not coincide with the last slope of the Harder-Narasimhan polygon of $(\mathbb Z^2,\|\ndot\|_k)$.

In this article, we prove the existence and uniqueness of Harder-Narasimhan filtration by a method of slope functions. The framework of this article is the theory of proto-abelian category, 
proposed by Andr\'e \cite{MR2571693}. In section 2 and 3, we recall the concept and some results of proto-abelian category following the work of Andr\'e \cite{MR2571693}. Then we give a universal characterization (see Proposition \ref{properties}) of the sum of two strict sub-objects of an object $E$ in a proto-abelian category which means the sum of two strict sub-objects could be considered as the smallest object larger than both of the two objects. In section 4, we define the slope functions of an object $E$ (see Definition \ref{Def: slope function}) and show that the slope function of $E$ could induce a slope function of its subquotient in a canonical way (see Proposition \ref{prop:compactilblity}). 
In section 5, we prove Proposition \ref{main} and Lemma \ref{main2} which give a different method to find the destabilizing sub-object. With the help of Noetherian and Artinian conditions, we can prove the Theorem \ref{existence} and Theorem \ref{Uniqueness} which are the main theorems. It should be noted that, in Proposition \ref{main} and Lemma \ref{main2}, we ask the slope function satisfying the strong slope inequality, which means the slope function considered here is actually $\mu_{\max}$ in classical reference by Proposition \ref{prop: mu max}. However, the minimal slope is not affected in the classical case by \cite[p. 256]{Chen_Moriwaki}.

We also give a method to prove the existence and uniqueness theorem of the Harder-Narasimhan filtration given a bounded lattice $(\Gamma, \leqslant)$. This method is more general than the method in proto-abelian category and we could get Harder-Narasimhan filtration escaping from the frame of proto-abelian category.  However, the main theorem is stated in the framework of proto-abelian category because it is general enough for application.

\section{Strict monomorphisms and epimorphisms}
In this section, we recall some basic properties of strict monomorphisms and strict epimorphisms, using Andr\'e's terminology \cite{MR2571693}.
\begin{defn}
Let $\mathcal{C}$ be a category having a zero object {$\boldsymbol{0}$}. We call a morphism in $\mathcal{C}$ a \emph{strict monomorphism} if it is a kernel of some morphism. We often use a tailed arrow $\rightarrowtail$ to denote a monomorphism. By the universal property of kernel, we can show that a strict monomorphism is a monomorphism. Similarly, we call a morphism in $\mathcal{C}$ a \emph{strict epimorphism} if it is a cokernel of a morphism. We often use a two headed arrow $\twoheadrightarrow$ to denote an epimorphism. Any strict epimorphism is necessarily an epimorphism.
\end{defn}

\begin{lemma}\label{Lem: kernel cokernel}Let $\mathcal C$ be a category having a zero object $\boldsymbol{0}$.
\begin{enumerate}[label=\rm(\arabic*)]
\item\label{Item: kernel of cokernel} Let $f:M\rightarrow N$ be a strict monomorphism of $\mathcal C$. If $f$ admits a cokernel $g:N\rightarrow Q$, then it is a kernel of $g$.
\item\label{Item: cokernel of kernel} Let $f':N'\rightarrow M'$ be a strict epimorphism of $\mathcal C$. If $f'$ admits a kernel $g':Q'\rightarrow N'$, then it is a cokernel of $g'$.
\end{enumerate}
\end{lemma}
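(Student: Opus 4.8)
The plan is to establish \ref{Item: kernel of cokernel} directly from the two universal properties in play, and then to obtain \ref{Item: cokernel of kernel} for free by dualising. For \ref{Item: kernel of cokernel}, I would first unwind the hypotheses: that $f$ is a strict monomorphism means $f$ is the kernel of some morphism $h\colon N\rightarrow P$, so in particular $h\circ f=0$ and $f$ is the universal morphism killed by $h$. The hypothesis furnishes in addition a cokernel $g\colon N\rightarrow Q$ of $f$. The goal is then simply to check that $f$ satisfies the universal property of the kernel of $g$.

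The key step is to observe that $h$ factors through $g$. Indeed, since $h\circ f=0$ and $g$ is the cokernel of $f$, the universal property of the cokernel yields a unique morphism $w\colon Q\rightarrow P$ with $w\circ g=h$. With this factorisation in hand, I would verify the kernel property of $f$ relative to $g$ as follows. First, $g\circ f=0$ holds by the very definition of the cokernel. Second, given any morphism $\varphi\colon T\rightarrow N$ with $g\circ\varphi=0$, one computes $h\circ\varphi=(w\circ g)\circ\varphi=w\circ(g\circ\varphi)=0$; since $f$ is the kernel of $h$, there is a unique morphism $\psi\colon T\rightarrow M$ with $f\circ\psi=\varphi$. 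This is exactly the factorisation required for $f$ to be a kernel of $g$, and its uniqueness is guaranteed either by the kernel property of $f$ over $h$ or simply by the fact that a strict monomorphism is a monomorphism.

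Part \ref{Item: cokernel of kernel} is the exact formal dual of \ref{Item: kernel of cokernel}: a strict epimorphism $f'$ in $\mathcal C$ is by definition the cokernel of some morphism, a kernel in $\mathcal C$ becomes a cokernel in the opposite category $\mathcal C^{\mathrm{op}}$, and the zero object is self-dual. I would therefore deduce \ref{Item: cokernel of kernel} by applying \ref{Item: kernel of cokernel} to $\mathcal C^{\mathrm{op}}$, with no further computation.

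I do not anticipate a serious obstacle here; the one point that deserves care is the factorisation $w\circ g=h$, since it is the hinge that transfers the universal property from $h$ to $g$. Everything else is a routine manipulation of zero morphisms, which are well defined precisely because $\mathcal C$ has a zero object, together with the uniqueness clauses in the definitions of kernel and cokernel.
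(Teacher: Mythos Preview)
Your proposal is correct and follows essentially the same argument as the paper: factor $h$ through the cokernel $g$ via the universal property, then use this factorisation to transfer the kernel property of $f$ from $h$ to $g$, and deduce part \ref{Item: cokernel of kernel} by passing to the opposite category. The only differences are notational.
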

\begin{proof}
\ref{Item: kernel of cokernel} Let $h:N\rightarrow P$ be a morphism of $\mathcal C$ such that $f$ is a kernel of $h$. Since $h\circ f=0$, there exists a unique morphism $u:Q\rightarrow P$ such that $u\circ g=h$ by the universal property of cokernel.
\[\xymatrix{&R\ar@{.>}[ld]_-{\exists!v}\ar[d]^-j\\
M\ar[r]_-f&N\ar[r]^-g\ar[d]_-h&Q\ar[ld]^-{\exists !u}\\
&P}\]
Let $j:R\rightarrow N$ be a morphism such that $g\circ j=0$. Then one has $h\circ j=u\circ g\circ j=0$. By the universal property of kernel, there exists a unique morphism $v:R\rightarrow M$ such that $j=f\circ v$. Therefore, $f$ is a kernel of $g$.

\ref{Item: cokernel of kernel} can be deduced from  \ref{Item: kernel of cokernel} by passing to the opposite category.
\end{proof}

\begin{defn}
A \emph{short exact sequence} of $\mathcal{C}$ is by definition a diagram of morphisms of $\mathcal{C}$ of the form
\[\xymatrix{\relax 0\ar[r]&L\ar[r]^-{f}&M\ar[r]^-g&N\ar[r]&0},\]
 such that $f$ is a kernel of $g$ and $g$ is a cokernel of $f$. By definition, if the above diagram is a short exact sequence, then $f$ is a strict monomorphism and $g$ is a strict epimorphism.
\end{defn}

\begin{defn}
Let $f:M\rightarrow N$ and $g:Q\rightarrow N$ be morphisms of $\mathcal C$. We call  \emph{pull-back of $g$ by $f$} any morphism $g':P\rightarrow M$ which fits into a cartesian diagram as follows:
\[\
\xymatrix{\relax
          P\ar[r]^-{f'} \ar[d]_-{g'}     &         Q \ar[d]^-{g}    \\
  M \ar[r]_-{f} & N         }\]
Similarly, if $f:M\rightarrow N$ and $h:M\rightarrow A$ are morphisms of $\mathcal C$, we call \emph{push-forward} of $h$ by $f$ any morphism $h'': N\rightarrow B$ which fits into a cocartesian diagram.
\[\xymatrix{\relax M\ar[r]^-f\ar[d]_-{h}&N\ar[d]^-{h''}\\
A\ar[r]_-{f''}&B}\]
\end{defn}

\begin{assum}
In the rest of the section, we fix a category $\mathcal C$ having a zero object $\boldsymbol{0}$. We also assume that any morphism of $\mathcal{C}$ admits a kernel and a cokernel.
\end{assum}

\begin{prop}\label{pull-back}
Let $f:M\rightarrow N$ be a strict monomorphism of $\mathcal C$. Then, for any morphism $g:Q\rightarrow N$, the pull-back of $f$ by $g$ always exists and is a strict monomorphism.
\end{prop}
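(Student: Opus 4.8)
The plan is to realize the pull-back concretely as a kernel. Since $f$ is a strict monomorphism, by definition there is a morphism $h\colon N\to P$ of which $f$ is a kernel; in particular $h\circ f=0$ and $f$ is a monomorphism. The candidate for the pull-back of $f$ by $g$ is the morphism $g'\colon P'\to Q$ that one takes to be a kernel of the composite $h\circ g\colon Q\to P$, which exists by the standing assumption that every morphism of $\mathcal C$ admits a kernel. By construction $g'$ is a strict monomorphism, so once the relevant square is produced the only remaining work is to check that it is cartesian.

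First I would produce the fourth edge of the square. Since $g'$ is a kernel of $h\circ g$, we have $h\circ g\circ g'=0$, so the morphism $g\circ g'\colon P'\to N$ is killed by $h$; by the universal property of $f=\ker(h)$ there is a unique morphism $f'\colon P'\to M$ with $f\circ f'=g\circ g'$. This yields a commutative square with left edge $g'\colon P'\to Q$, bottom edge $g\colon Q\to N$, top edge $f'\colon P'\to M$ and right edge $f\colon M\to N$, of the shape required by the definition of pull-back of $f$ by $g$.

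Next I would verify that this square is cartesian. Given an object $T$ together with morphisms $a\colon T\to M$ and $b\colon T\to Q$ satisfying $f\circ a=g\circ b$, one has $h\circ g\circ b=h\circ f\circ a=0$, so $b$ factors through the kernel $g'$ of $h\circ g$: there is a unique $c\colon T\to P'$ with $g'\circ c=b$, uniqueness holding because $g'$, being a kernel, is a monomorphism. It then remains to check $f'\circ c=a$, which follows from $f\circ(f'\circ c)=g\circ g'\circ c=g\circ b=f\circ a$ together with the fact that $f$ is a monomorphism. Hence $c$ is the unique morphism with $g'\circ c=b$ and $f'\circ c=a$, so the square is cartesian; thus $g'$ is a pull-back of $f$ by $g$, and it is a strict monomorphism since it is a kernel of $h\circ g$.

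The argument is essentially formal, so there is no deep obstacle; the only points needing care are the bookkeeping of which leg of the square plays which role (so that the "pull-back of $f$ by $g$" is the leg $g'$ landing in $Q$) and the correct use of the two separate monomorphism cancellations, one for $g'$ to get uniqueness of $c$ and one for $f$ to get $f'\circ c=a$. One could alternatively phrase the same computation via Lemma \ref{Lem: kernel cokernel}\ref{Item: kernel of cokernel}, but invoking directly that $f$ is a kernel of $h$ is the cleanest route.
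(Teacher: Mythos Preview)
Your proof is correct and follows essentially the same approach as the paper: construct the pull-back leg landing in $Q$ as a kernel of a composite, then use the universal property of $f$ as a kernel to produce the remaining leg and verify cartesianness. The only cosmetic difference is that the paper first invokes Lemma~\ref{Lem: kernel cokernel}\ref{Item: kernel of cokernel} to replace the arbitrary morphism $h$ by the cokernel $\pi$ of $f$ (so that the kernel taken is $\ker(\pi\circ g)$), whereas you work directly with any $h$ of which $f$ is a kernel; as you note, this shortcut is harmless and slightly more direct.
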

\begin{proof}
 See \cite[p. 9]{MR2571693}.
\end{proof}

By passing to the opposite category, we deduce from Proposition \ref{pull-back}
the following result.
\begin{prop}\label{Pro: push-forward ep}
Let $f':P\rightarrow M$ be a strict epimorphism of $\mathcal C$. Then, for any morphism $g':P\rightarrow Q$, the push-forward of $f'$ by $g'$ always exists and is a strict epimorphism.
\end{prop}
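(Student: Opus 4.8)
The plan is to prove the statement purely by duality, deducing it from Proposition \ref{pull-back} applied in the opposite category $\mathcal{C}^{\mathrm{op}}$, exactly as the sentence preceding the statement indicates. The first thing I would record is that the hypotheses in force are self-dual. The existence of a zero object is preserved under $(-)^{\mathrm{op}}$, since a zero object is simultaneously initial and terminal; and the standing assumption that every morphism admits a kernel and a cokernel is also self-dual, because a kernel in $\mathcal{C}$ is precisely a cokernel in $\mathcal{C}^{\mathrm{op}}$ and conversely. Consequently $\mathcal{C}^{\mathrm{op}}$ satisfies exactly the hypotheses under which Proposition \ref{pull-back} was established.

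Next I would set up the translation dictionary explicitly. By the defining condition, a morphism is a strict monomorphism in $\mathcal{C}$ (a kernel of some morphism) if and only if the same morphism, viewed in $\mathcal{C}^{\mathrm{op}}$, is a strict epimorphism (a cokernel of a morphism); this is just the self-dual reading of the definition. Likewise, comparing the two displayed squares in the definitions of pull-back and push-forward, a square is cocartesian in $\mathcal{C}$ if and only if the reversed square is cartesian in $\mathcal{C}^{\mathrm{op}}$, so that a push-forward of a morphism along $f'$ in $\mathcal{C}$ is literally the same datum as a pull-back in $\mathcal{C}^{\mathrm{op}}$.

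With this dictionary in hand, the proof becomes a single application. Given a strict epimorphism $f':P\rightarrow M$ and an arbitrary morphism $g':P\rightarrow Q$ in $\mathcal{C}$, I would view them in $\mathcal{C}^{\mathrm{op}}$: there $f'$ becomes a strict monomorphism $M\rightarrow P$ and $g'$ becomes a morphism $Q\rightarrow P$. Proposition \ref{pull-back}, applied in $\mathcal{C}^{\mathrm{op}}$ to this strict monomorphism and this morphism, yields a pull-back of $f'$ by $g'$ that is again a strict monomorphism in $\mathcal{C}^{\mathrm{op}}$. Translating back through the dictionary, this pull-back square is a cocartesian square in $\mathcal{C}$ exhibiting a push-forward $h'':N\rightarrow B$ of $f'$ by $g'$, and the morphism so produced is a strict epimorphism in $\mathcal{C}$. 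This establishes simultaneously the existence of the push-forward and the asserted stability of strict epimorphisms under push-forward.

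Since the argument is entirely formal, there is no genuine analytic obstacle; the only point requiring care is the bookkeeping of the duality. In particular I would verify that the notion of push-forward defined by a cocartesian diagram is the exact opposite of the notion of pull-back defined by a cartesian diagram, and that every auxiliary fact invoked inside the proof of Proposition \ref{pull-back} (the use of Lemma \ref{Lem: kernel cokernel} and the universal properties of kernel and cokernel) dualizes correctly. Once this is checked, the conclusion follows immediately.
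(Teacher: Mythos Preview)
Your proposal is correct and follows exactly the approach the paper uses: the paper's entire proof is the single sentence ``By passing to the opposite category, we deduce from Proposition~\ref{pull-back} the following result,'' and you have simply unpacked that duality argument carefully. The bookkeeping you record (self-duality of the standing hypotheses, the kernel/cokernel and cartesian/cocartesian dictionaries) is precisely what makes that one-line reduction valid.
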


\begin{nota}
Let $M\stackrel{f}\longrightarrow N\stackrel{g}\longleftarrow Q$ be a pair of  morphisms in $\mathcal C$, where either $f$ or $g$ is a strict monomorphism . We often denote by $M\times_N Q$ a fiber product of $f$ and $g$, and by
\[\operatorname{pr}_1:M\times_N Q\rightarrow M\quad\text{ and }\quad\operatorname{pr}_2:M\times_NQ\rightarrow Q\]
the universal morphisms.
\end{nota}

\begin{prop} \label{strict}
Let $f:L\rightarrow M$ and $g:M\rightarrow N$ be two morphisms of $\mathcal C$. Assume that  $g\circ f$ is a strict monomorphism and $g$ is  a monomorphism, then $f$ is a strict monomorphism.
\end{prop}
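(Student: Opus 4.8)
The plan is to realize $f$ as a kernel by exploiting the fact that $g\circ f$, being a strict monomorphism, coincides with the kernel of its own cokernel. Since by the standing assumption every morphism of $\mathcal C$ admits a cokernel, let $\pi\colon N\rightarrow Q$ be a cokernel of $g\circ f$. Then Lemma \ref{Lem: kernel cokernel}\ref{Item: kernel of cokernel} guarantees that $g\circ f$ is a kernel of $\pi$. The configuration is summarized by
\[\xymatrix{
R\ar[dr]^-{h}\ar@{.>}[d]_-{\exists! v}&&&\\
L\ar[r]_-{f}&M\ar[r]_-{g}&N\ar[r]_-{\pi}&Q.
}\]

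Next I would show that $f$ is a kernel of $\pi\circ g\colon M\rightarrow Q$. That $(\pi\circ g)\circ f=\pi\circ(g\circ f)=0$ is immediate. Given any morphism $h\colon R\rightarrow M$ with $(\pi\circ g)\circ h=0$, i.e.\ $\pi\circ(g\circ h)=0$, the kernel property of $g\circ f$ with respect to $\pi$ yields a unique morphism $v\colon R\rightarrow L$ with $(g\circ f)\circ v=g\circ h$; since $g$ is a monomorphism, this rewrites as $f\circ v=h$. For uniqueness of such a $v$: if $f\circ v=f\circ v'$, then $(g\circ f)\circ v=(g\circ f)\circ v'$, and since $g\circ f$ is a strict monomorphism, hence a monomorphism, we get $v=v'$. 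Thus $f$ is a kernel of $\pi\circ g$, so $f$ is a strict monomorphism.

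The argument is short, and I do not expect a genuine obstacle; the only delicate points are bookkeeping. One must check that cancelling $g$ on the left is legitimate — which is exactly the hypothesis that $g$ is a monomorphism — and that the factorization through $f$ is unique even though we do not yet know that $f$ itself is a monomorphism, which is recovered from $g\circ f$ being a monomorphism. The one step to state carefully is the invocation of Lemma \ref{Lem: kernel cokernel}, namely that a strict monomorphism admitting a cokernel is automatically the kernel of that cokernel.
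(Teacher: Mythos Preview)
Your proof is correct and more direct than the paper's. You identify $f$ explicitly as a kernel of $\pi\circ g$, where $\pi$ is a cokernel of $g\circ f$, using only Lemma~\ref{Lem: kernel cokernel} and the monomorphism hypothesis on $g$. The paper instead invokes Proposition~\ref{pull-back} to form the fibre product $P$ of $g$ and $g\circ f$, obtains a canonical morphism $k:L\to P$ from the pair $(\operatorname{id}_L,f)$, and then argues via the universal property of the fibre product that $k$ is an isomorphism; since the projection $h':P\to M$ is a strict monomorphism (as a pull-back of $g\circ f$), so is $f=h'\circ k$. But the proof of Proposition~\ref{pull-back} constructs that very pull-back as the kernel of $\pi\circ g$, so the paper's argument is really a repackaging of yours through the fibre-product formalism. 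Your version is self-contained and shorter; the paper's version has the minor advantage of reusing a result already on the shelf and fitting into the running theme that pull-backs of strict monomorphisms behave well.
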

\begin{proof}
See \cite[p. 10]{MR2571693}.
\end{proof}

\begin{prop}\label{push-out}
Consider the following cartesian square in $\mathcal C$.\begin{equation}\label{Equ: square cartesian}\begin{gathered}\xymatrix{
          P\ar[r]^-{f'}\ar@{}[rd]|-{\square} \ar[d]_-{g'}     &         Q \ar@{->>}[d]^-{g}    \\
  M \ar[r]_-{f} & N         }\end{gathered}\end{equation}
If $g$ is a strict epimorphism, and $g'$ is an epimorphism, then the square is also cocartesian.
\end{prop}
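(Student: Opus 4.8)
The plan is to verify the universal property of a pushout directly, by a diagram chase, exploiting the fact that a strict epimorphism is the cokernel of its own kernel.

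First I would fix a kernel $k:K\to Q$ of $g$; this exists by the standing assumption that every morphism of $\mathcal C$ admits a kernel and a cokernel. Since $g$ is a strict epimorphism and admits the kernel $k$, Lemma \ref{Lem: kernel cokernel}\,\ref{Item: cokernel of kernel} shows that $g$ is a cokernel of $k$. Next, because $g\circ k=0$ and $f\circ 0=0$ (here $0:K\to M$ denotes the zero morphism), the pair $(0:K\to M,\;k:K\to Q)$ forms a compatible cone on the cospan $M\xrightarrow{f}N\xleftarrow{g}Q$, so the universal property of the fibre product $P=M\times_N Q$ produces a unique morphism $\ell:K\to P$ with $f'\circ\ell=k$ and $g'\circ\ell=0$.

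Now suppose given $u:Q\to T$ and $v:M\to T$ with $u\circ f'=v\circ g'$; I must produce a unique $w:N\to T$ with $w\circ g=u$ and $w\circ f=v$. Composing the hypothesis with $\ell$ gives $u\circ k=u\circ f'\circ\ell=v\circ g'\circ\ell=v\circ 0=0$. Since $g$ is a cokernel of $k$, there is therefore a unique $w:N\to T$ with $w\circ g=u$. To check the remaining identity, precompose with $g'$: one has $w\circ f\circ g'=w\circ g\circ f'=u\circ f'=v\circ g'$, whence $w\circ f=v$ because $g'$ is an epimorphism. Finally, uniqueness of $w$ among morphisms with $w\circ g=u$ is automatic since $g$ is an epimorphism, so it is in particular unique with both properties; this establishes that the square is cocartesian.

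I expect no serious obstacle here: the argument is essentially routine. The one load-bearing point is the reduction of ``$u$ factors through $g$'' to the vanishing $u\circ k=0$, which is exactly where strictness of $g$ is used (via Lemma \ref{Lem: kernel cokernel}\,\ref{Item: cokernel of kernel}); symmetrically, the hypothesis that $g'$ is merely an epimorphism is precisely what is needed — and all that is needed — to transport the second identity $w\circ f=v$ across the square. The only care required is bookkeeping about which leg of the cartesian square plays which role.
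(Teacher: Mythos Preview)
Your argument is correct and follows the same overall strategy as the paper: reduce to the cokernel universal property of $g$ via its kernel, then use that $g'$ is an epimorphism to transport the identity $w\circ f=v$ across the square. The one difference is in how you reach $u\circ k=0$. The paper takes kernels $g_0:Q_0\to Q$ and $g_0':P_0\to P$ of both $g$ and $g'$, proves that the induced comparison map $f_0:P_0\to Q_0$ is an isomorphism, and then deduces $u\circ g_0=0$ from $u\circ g_0\circ f_0=u\circ f'\circ g_0'=v\circ g'\circ g_0'=0$. You instead lift $k$ directly through the cartesian square to a map $\ell$ with $g'\circ\ell=0$ and $f'\circ\ell=k$, which yields $u\circ k=0$ in one line. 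Your route is the more economical of the two; the paper's detour establishes the auxiliary fact that the cartesian square induces an isomorphism on kernels, which is of some independent interest but is not actually needed for the cocartesian conclusion.
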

\begin{proof}
See \cite[p. 10]{MR2571693}.
\end{proof}

By passing to the opposite category, we deduce from Proposition \ref{push-out}
the following result.
\begin{prop}\label{prop: cartesian}
Consider the following cocartesian square in $\mathcal C$.\begin{equation}\label{Equ: square cocartesian}\begin{gathered}\xymatrix{
          P\ar[r]^-{f'} \ar[d]_-{g'}     &         Q \ar@{->>}[d]^-{g}    \\
  M \ar[r]_-{f} & N         }\end{gathered}\end{equation}
If $g'$ is a strict monomorphism, and $g$ is a monomorphism, then the square is also cartesian.
\end{prop}
\section{Proto-abelian Category}

\subsection{Generalities}

\begin{defn}\label{proto}
We call \emph{proto-abelian category} a category $\mathcal C$ having a zero object, which satisfies the following conditions:
\begin{enumerate}[label=\rm(\alph*)]
\item \label{Axiom a}any morphism of $\mathcal{C}$ has a kernel and a cokernel.
\item \label{Axiom b}any morphism with zero kernel (resp. zero cokernel) is a monomorphism (resp. an epimorphism).
\item \label{Axiom c}the pull-back of a strict epimorphism by a strict monomorphism, which exists by \ref{Axiom a} and Proposition \ref{pull-back}, is a strict epimorphism; the push-forward of a strict monomorphism by a strict epimorphism, which exists by \ref{Axiom a} and Proposition \ref{Pro: push-forward ep}, is a strict monomorphism.
\end{enumerate}
Note that the opposite category of a proto-abelian category is also a proto-abelian category.
\end{defn}

\begin{rmk}
 The notion of proto-Abelian category could be considered as a non-additive analogue of the notion of abelian category, we refer to  \cite{MR2571693} for details where this notion has been proposed. See \cite[p. 21]{MR3970975}  for a more general notion of proto-exact category.
\end{rmk}

\begin{rmk}\label{Rem: pull-back which is an iso}
Let $\mathcal C$ be a proto-abelian category. Consider the following diagram
\[\xymatrix{&E\ar@{->>}[d]^-{\pi}\\F\ar@{ >->}[r]_-{f}&G}\]
of morphisms of $\mathcal C$, where $f$ is a strict monomorphism, and $\pi$ is a strict epimorphism. By Proposition \ref{pull-back}, the fiber product of $f$ and $\pi$ exists. In other words, one can complete the above diagram to a cartesian square.
\[\xymatrix{H\ar@{ >->}[r]^-g\ar@{->>}[d]_-{p}&E\ar@{->>}[d]^-{\pi}\\F\ar@{ >->}[r]_-{f}&G}
\]
Proposition \ref{pull-back} also shows that $g$ is a strict monomorphism.
Moreover, by axiom \ref{Axiom c}, we obtain that the morphism $p$ is a strict epimorphism, and in particular an epimorphism. By Proposition \ref{push-out}, the above square is not only cartesian but also cocartesian. In particular, if $g$ is an isomorphism, so is $f$.
\end{rmk}

\begin{egs}\label{examples}
\begin{enumerate}
\item Any Abelian category is a proto-abelian category (see \cite[p. 11]{MR2571693}).\\
\item \label{Item: spectral decomposition}Let $E$ be a Hermitian space, $\mathscr{A}$ be a Hermitian transform over $E$, $h$ be the minimal polynomial of $\mathscr{A}$ and $\mathcal C_h$ be a category defined as follows:

     An object of $\mathcal C_h$ is of the form $(V, \mathscr{B})$, where $V$ is a Hermitian space, $\mathscr{B}$ is a Hermitian transform over $V$ and $h(\mathscr{B})=0$. Let $(V, \mathscr{B})$ and $(V',\mathscr{B}')$ be two objects in $\mathcal C_h$, a morphism from $(V, \mathscr{B})$
     to $(V',\mathscr{B}')$ is a linear map $f:V\rightarrow V'$ such that the operator norm of $f \leq 1$ and $f\circ \mathscr{B}=\mathscr{B}'\circ f$.

     We call $\mathcal C_h$ be the category of Hermitian spaces with $h$ structure. Assume that $f: (V,\mathscr{B})\rightarrow (V',\mathscr{B}')$ is a morphism in $\mathcal{C}_h$. Note that
     $F=\{x\in V: f(x)=0\}$ is a $\mathscr{B}$-invariant subspace of $V$, $(F,\mathscr{B}|_F)$ is an object in $\mathcal C_h$ where the Hermitian norm is induced from $V$. In fact, the inclusion map $i: (F,\mathscr{B}|_F)\rightarrow (V,\mathscr{B})$ is a kernel of $f$. Denote $V'/f(V)$ by $G$. Since $f(V)$ is a $\mathscr{B}'$-invariant subspace of $V'$, $\mathscr{B}'$ induces a linear transform $\mathscr{B'}_G$ over $G$. Thus $(G,\mathscr{B}'_G)$ is an object in $\mathcal C_h$ where the Hermitian norm is the quotient norm induced from $V'$. In fact, the quotient map $\pi: (V',\mathscr{B}')\rightarrow (G,\mathscr{B}'_G)$ is a cokernel of $f$. Therefore, any morphism in $\mathcal C_h$ has a kernel and a cokernel.

     We claim that this category is a proto-abelian category. We only need to show \ref{Axiom c}. First, we show that the pull-back of a strict epimorphism by a strict monomorphism is a strict epimorphism. Let $f:(M,\mathscr{B}_M)\rightarrow (N,\mathscr{B}_N)$ be a strict monomorphism in $\mathcal C_h$, $g:(Q,\mathscr{B}_Q)\rightarrow (N,\mathscr{B}_N)$ be a strict epimorphism in $\mathcal C_h$, $\pi:(N,\mathscr{B}_N)\rightarrow (G,\mathscr{B}_G)$ be a cokernel of $f$ and $f':(P,\mathscr{B}_P)\rightarrow (Q,\mathscr{B}_Q)$ be a kernel of $\pi\circ g$. By Proposition \ref{pull-back}, we have the following Cartesian square.
       \begin{equation}
       \begin{gathered}\xymatrix{
          P\ar[r]^-{f'}\ar@{}[rd]|-{\square} \ar[d]_-{g'}     &         Q \ar@{->>}[d]^-{g}    \\
        M \ar@{>->}[r]_-{f} & N  \ar[r]_{\pi} &G       }\end{gathered}
       \end{equation}
    To show $g'$ is a strict epimorphism, we only need to show that the norm of $M$ is the same as the quotient norm induced from $P$. Let $y\in M$, we have
     \[\|y\|_M=\|f(y)\|_N=inf\{\|x\|_Q:g(x)=f(y)\}.\]
    For any $x\in Q$ satisfying $g(x)=f(y)$, we have $\pi\circ g(x)=\pi\circ f(y)=0$. Hence, there exists a unique $x'\in P$ satisfying $x=f'(x')$ and $\|x'\|_P=\|x\|_Q$ because $f'$ is a strict monomorphism by Proposition \ref{pull-back}. Since $f$ is a monomorphism, we have
    \[\|y\|_M=\inf\{\|x'\|_P:g\circ f'(x')=f(y)\}=\inf\{\|x'\|_P:g'(x')=y\}.\]
    After that, we need to show that the pushforward of a strict monomorphism by a strict epimorphism is a strict monomorphism. Let $f':(P,\mathscr{B}_P)\rightarrow (M,\mathscr{B}_M)$ be a strict monomorphism in $\mathcal C_h$, $g':(P,\mathscr{B}_P)\rightarrow (Q,\mathscr{B}_Q)$ be a strict epimorphism in $\mathcal C_h$, $i:(F,\mathscr{B}_F)\rightarrow (P,\mathscr{B}_P)$ be a kernel of $g'$ and $g:(M,\mathscr{B}_M)\rightarrow (N,\mathscr{B}_N)$ be a cokernel of $f'\circ i$. By Proposition \ref{Pro: push-forward ep}, we have the following cocartesian square.
    \begin{equation}
       \begin{gathered}\xymatrix{
       F\ar[r]^-{i}&   P\ar@{ >->}[d]_-{f'} \ar@{->>}[r]^-{g'}     &         Q \ar[d]^-{f}    \\
    &M \ar[r]_-{g} & N        }\end{gathered}
    \end{equation}
   To show $f$ is a strict monomorphism, we only need to show that the norm of $Q$ is the same as the induced norm from $N$. Let $y\in Q$, we have
   \[\|y\|_Q=\inf\{\|x\|_P:g'(x)=y\}=\inf\{\|f'(x)\|_M: f\circ g'(x)=f(y)\}\]
    \[=\inf\{\|f'(x)\|_M:g\circ f'(x)=f(y)\}=\|f(y)\|_N.\]
\item \label{item: Counterexample slope function}
  We consider a category $\mathcal{C}$ of four distinct objects $\boldsymbol{0},L,M,N$, such that $\boldsymbol{0}$ is the zero object and $\mathcal{C}$ only contains two non-zero and non-identity morphisms $f:L\rightarrow M$ and $g:M\rightarrow N$. Clearly one has $g\circ f=0$.
    \[\xymatrix{
                &\boldsymbol{ 0} \ar[dl]\ar[d]\ar[dr] &            \\
   L  \ar[r]_-f &M\ar[r]_-g &N             }\]
  We can see that $f$ is the kernel of $g$ and $g$ is the cokernel of $f$.  We can further check that $\mathcal{C}$ is a proto-abelian category.
\item We consider the category $\mathcal C$ of finite dimensional normed linear spaces over a complete normed field $k$, where the morphisms are linear maps having operator norm $\leq 1$. Let $f:V\rightarrow W$ be a morphism in $\mathcal C$. Assume that $F=\{x\in V: f(x)=0\}$, $i:F\rightarrow V$ is the inclusion map and the norm of $F$ is the induced norm from $V$. Thus $i$ is a kernel of $f$. Assume that $G=W/f(V)$, $\pi: W\rightarrow G$ is the quotient map and the norm of $G$ is the quotient norm induced from $W$. Thus $\pi$ is a cokernel of $f$. Therefore, any morphism of $\mathcal C$ has a kernel and a cokernel. Furthermore, one can check that $\mathcal C$ is a proto-abelian category.
\end{enumerate}
\end{egs}

\begin{prop}\label{composite}
Let $\mathcal{C}$ be a proto-abelian category, and let $f: L\rightarrow M$ and $g: M\rightarrow N$ be morphisms of $\mathcal{C}$.
\begin{enumerate}[label=\rm(\arabic*)]
\item\label{Item: composition epimorphism} If $f$ and $g$ are both strict epimorphisms, so is $g\circ f$.
\item\label{Item: composition monomorphism} If $f$ and $g$ are both strict monomorphisms, so is $g\circ f$.
\end{enumerate}
\end{prop}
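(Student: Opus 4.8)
The plan is to prove part (2) directly and then deduce part (1) by passing to the opposite category, which is again proto-abelian.

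For (2), write $f\colon L\rightarrowtail M$ and $g\colon M\rightarrowtail N$ for the two strict monomorphisms, and let $\pi\colon M\twoheadrightarrow P$ be a cokernel of $f$; by Lemma \ref{Lem: kernel cokernel} one has $f=\ker(\pi)$. The first step is to form the push-forward of $g$ by $\pi$, which exists by axiom \ref{Axiom c}, producing a cocartesian square
\[\xymatrix{M\ar@{->>}[d]_-{\pi}\ar@{ >->}[r]^-{g}&N\ar[d]^-{b}\\ P\ar@{ >->}[r]_-{a}&B}\]
in which $a\colon P\rightarrowtail B$ is again a strict monomorphism. The key claim is that this square is \emph{also cartesian}. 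To see this I would pass to $\mathcal C^{\mathrm{op}}$, where the square becomes a cartesian square in which the morphisms induced by $g$ and by $a$ are both strict epimorphisms of $\mathcal C^{\mathrm{op}}$; since a strict epimorphism is an epimorphism, Proposition \ref{push-out} applies in $\mathcal C^{\mathrm{op}}$ (with these two morphisms playing the roles of $g$ and $g'$) and shows that the square is also cocartesian in $\mathcal C^{\mathrm{op}}$, i.e. cartesian in $\mathcal C$. Equivalently, this is the statement dual to Remark \ref{Rem: pull-back which is an iso}.

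Granting that the square is bicartesian, I would conclude by showing that $g\circ f$ is a kernel of $b$. First, $b\circ(g\circ f)=a\circ\pi\circ f=0$. Conversely, if $h\colon T\to N$ satisfies $b\circ h=0$, then $b\circ h=0=a\circ(0_{T\to P})$, so the cartesianness of the square yields a unique $h'\colon T\to M$ with $g\circ h'=h$ and $\pi\circ h'=0$; since $f=\ker(\pi)$, the latter equation gives a unique $u\colon T\to L$ with $f\circ u=h'$, whence $(g\circ f)\circ u=h$, and $u$ is unique because $f$ and $g$ are monomorphisms. Thus $g\circ f=\ker(b)$ is a strict monomorphism.

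Part (1) then follows from part (2) applied in the proto-abelian category $\mathcal C^{\mathrm{op}}$: if $f$ and $g$ are strict epimorphisms of $\mathcal C$, then $f^{\mathrm{op}}$ and $g^{\mathrm{op}}$ are strict monomorphisms of $\mathcal C^{\mathrm{op}}$, so $(g\circ f)^{\mathrm{op}}=f^{\mathrm{op}}\circ g^{\mathrm{op}}$ is a strict monomorphism of $\mathcal C^{\mathrm{op}}$, i.e. $g\circ f$ is a strict epimorphism of $\mathcal C$. I expect the only genuinely delicate point to be the verification that the push-out square is cartesian: since the text does not isolate the ``bicartesian square attached to a strict-monomorphism/strict-epimorphism span'' as a single lemma, one must assemble it from axiom \ref{Axiom c}, Proposition \ref{pull-back} and Proposition \ref{push-out}, being careful about which of $\mathcal C$ and $\mathcal C^{\mathrm{op}}$ each is invoked in; the remaining diagram chase is routine.
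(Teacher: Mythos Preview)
Your argument is correct and is essentially the dual of the paper's proof: the paper proves \ref{Item: composition epimorphism} first by pulling back $\ker(g)$ along $f$, using axiom \ref{Axiom c} and Proposition \ref{push-out} to get a bicartesian square, and then identifying $g\circ f$ with the cokernel of the pulled-back map (via an induced isomorphism), whereas you prove \ref{Item: composition monomorphism} first by pushing out $\operatorname{coker}(f)$ along $g$ and directly verifying that $g\circ f$ satisfies the universal property of $\ker(b)$. The only cosmetic difference is that the paper obtains bicartesianness working in $\mathcal C$ itself (axiom \ref{Axiom c} gives that the parallel arrow is a strict epimorphism, then Proposition \ref{push-out} applies), while you pass explicitly to $\mathcal C^{\mathrm{op}}$; both routes are valid and yield the same conclusion.
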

\begin{proof}
See \cite[p. 12]{MR2571693}.

\end{proof}

\begin{prop}\label{epimonic}
Let $f: M\rightarrow N$ be a morphism in a proto-abelian category $\mathcal{C}$, $f_0:N_1\rightarrow N$ be an image of $f$ and $f_1:M\rightarrow M_1$ be a coimage of $f$. Then there exists a unique morphism $g:M_1\rightarrow N_1$ such that  $f=f_0\circ g\circ f_1$. Moreover $g$ is both  an epimorphism and a monomorphism.
\end{prop}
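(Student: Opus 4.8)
The plan is to construct $g$ by two applications of universal properties, then to show that $\ker(g)$ is a zero object by a pull-back argument relying on axiom \ref{Axiom c}, and finally to obtain the epimorphism half for free by passing to the opposite category. Throughout I recall that an image of $f$ is, by definition, a kernel of a cokernel of $f$, and a coimage a cokernel of a kernel of $f$; so $f_0$ is a strict monomorphism and $f_1$ a strict epimorphism, hence $f_0$ is a monomorphism and $f_1$ is an epimorphism.

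First I would set up the factorization. Fix a cokernel $q\colon N\to Q$ of $f$ with $f_0=\ker(q)$; since $q\circ f=0$, the universal property of the kernel produces a unique $\tilde f\colon M\to N_1$ with $f_0\circ\tilde f=f$. Likewise, fix a kernel $k\colon K\to M$ of $f$ with $f_1=\operatorname{coker}(k)$. Because $f_0$ is a monomorphism, the relation $f_0\circ(\tilde f\circ k)=f\circ k=0$ forces $\tilde f\circ k=0$, so $\tilde f$ factors uniquely through $f_1$, say $\tilde f=g\circ f_1$, and then $f_0\circ g\circ f_1=f_0\circ\tilde f=f$. Uniqueness of $g$ is then a one-line cancellation: if $f_0\circ g'\circ f_1=f$, then $f_0$ monic gives $g\circ f_1=g'\circ f_1$ and $f_1$ epic gives $g=g'$.

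The substantive step is that $g$ is a monomorphism. Here I would let $k_g\colon K_g\to M_1$ be a kernel of $g$ and form the fibre product $P=M\times_{M_1}K_g$ of $f_1$ and $k_g$, which exists by Proposition \ref{pull-back}; the projection $\operatorname{pr}_2\colon P\to K_g$, being the pull-back of the strict epimorphism $f_1$ along the strict monomorphism $k_g$, is a strict epimorphism by axiom \ref{Axiom c}, hence an epimorphism. A short chase then finishes it: from $g\circ f_1\circ\operatorname{pr}_1=g\circ k_g\circ\operatorname{pr}_2=0$ we get $f\circ\operatorname{pr}_1=f_0\circ\tilde f\circ\operatorname{pr}_1=f_0\circ g\circ f_1\circ\operatorname{pr}_1=0$, so $\operatorname{pr}_1$ factors as $k\circ w$ for some $w\colon P\to K$, and therefore $f_1\circ\operatorname{pr}_1=f_1\circ k\circ w=0$ since $f_1\circ k=0$; using the commutativity of the pull-back square again, $k_g\circ\operatorname{pr}_2=f_1\circ\operatorname{pr}_1=0$, and as $\operatorname{pr}_2$ is an epimorphism, $k_g=0$. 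A monomorphism which equals the zero morphism has a zero object as its source, so $\ker(g)$ is a zero object, and axiom \ref{Axiom b} then yields that $g$ is a monomorphism. For the epimorphism half I would only observe that the whole configuration is self-dual: in the (again proto-abelian) category $\mathcal C^{\mathrm{op}}$ the image of $f$ plays the role of a coimage and conversely, the induced factorization of $f$ in $\mathcal C^{\mathrm{op}}$ is $f_1^{\mathrm{op}}\circ g^{\mathrm{op}}\circ f_0^{\mathrm{op}}$, and so the monomorphism statement applied in $\mathcal C^{\mathrm{op}}$ says precisely that $g$ is an epimorphism in $\mathcal C$.

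I expect the main obstacle to be care rather than depth. One must invoke axiom \ref{Axiom c} on the correct leg of the pull-back square — it is the strict epimorphism $f_1$ that is being pulled back, not the strict monomorphism $k_g$ — and one must not conflate the statement that $k_g$ is the zero morphism with the statement that $\ker(g)$ is a zero object, the former implying the latter only because $k_g$ is monic. Apart from that, the argument is the classical image/coimage comparison reworked in a non-additive setting, using repeatedly that strict monomorphisms (resp. epimorphisms) are monomorphisms (resp. epimorphisms).
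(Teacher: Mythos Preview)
Your argument is correct, and it takes a somewhat different route from the paper's. Both proofs construct $g$ by two universal-property factorizations (you factor through $f_0$ first, the paper through $f_1$ first; by uniqueness these coincide), and both close by passing to $\mathcal C^{\mathrm{op}}$ for the epimorphism half. The divergence is in the proof that $\ker g=0$. The paper takes the coimage $g_1\colon M_1\to M_2$ of $g$, invokes Proposition~\ref{composite} to see that $g_1\circ f_1$ is a strict epimorphism, examines its kernel $m$, shows $m$ factors through $\ker f$, and from this produces a two-sided inverse of $g_1$; hence $g_1$ is an isomorphism and $\ker g=\ker g_1=0$. Your route is more direct: you pull back $k_g=\ker g$ along the strict epimorphism $f_1$, use axiom~\ref{Axiom c} immediately to make the projection $\operatorname{pr}_2$ onto $K_g$ an epimorphism, and then a short chase through $\ker f$ gives $k_g=0$. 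Your version avoids Proposition~\ref{composite} entirely and stays closer to the axioms, while the paper's version exhibits a bit more structure, namely that the coimage of $g$ is already an isomorphism. Both arguments are valid and of comparable length.
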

\begin{proof}
See \cite[p. 12]{MR2571693}.
\end{proof}

\subsection{Sum of sub-objects}

\begin{nota}
Let $\mathcal C$ be a proto-abelian category. If $V\stackrel{f}{\rightarrow}E$ is a strict monomorphism of $\mathcal C$, we denote by $E/V$ an object of $\mathcal C$ which represents a cokernel of the morphism $f$. 
In other words, there is a universal morphism from $E$ to $E/V$  which is a cokernel of $f$.
\end{nota}

\begin{defn}\label{Def: sum of subobjects}
Let $C$ be a proto-abelian category, and \[\xymatrix{V\ar@{ >->}[r]^-{f}& E& W\ar@{ >->}[l]_-{g}}\] be a pair of strict monomorphism of $\mathcal{C}$. Let $p:E\rightarrow E/V$ be a cokernel of $f$, and $q:E\rightarrow E/W$ be a cokernel of $g$. By Proposition \ref{Pro: push-forward ep}, there exists an object $Q$ of $\mathcal C$ which represents the cofiber coproduct  of $p$ and $q$, and the universal morphisms $v:E/V\rightarrow Q$ and $w:E/W\rightarrow Q$ are strict epimorphisms.
\[\
\xymatrix{\relax V+W\ar@{ >->}[rd]|-{k}\\
          &E\ar@{->>}[r]^-{p} 
          \ar@{->>}[d]_-{q}
          &         E/V \ar@{->>}[d]^-{v}    \\
& E/W\ar@{->>}[r]_-w & Q         }\]
We denote by $V+W$ an object of $\mathcal C$ which represents the kernel of $v\circ p=w\circ q$.
\end{defn}

\begin{prop}\label{properties}We keep the notation and hypotheses of Definition \ref{Def: sum of subobjects}. Let $k:V+W\rightarrow E$ be the universal morphism.
\begin{enumerate}[label=\rm(\arabic*)]
\item\label{Item:factor through} There exists a unique morphism $u:V\rightarrow V+W$ (resp. $t:W\rightarrow V+W$), such that $f=k\circ u$ (resp. $g=k\circ t$). Moreover, both $u$ and $t$ are strict monomorphisms. In addition, the square
\[\xymatrix{V\times_EW\ar[r]\ar[d]&V\ar[d]^u\\ W\ar[r]_t&V+W}\]
is cartesian.
\item \label{Item:universal}Let $h:H\rightarrow E$ be a strict monomorphism of $\mathcal C$. If there exists $f':V\rightarrow H$ and $g':W\rightarrow H$ such that $f=h\circ f'$ and $g=h\circ g'$ hold, then there exists a unique morphism $s:V+W\rightarrow H$ such that $k=h\circ s$ and $s$ is a strict monomorphism.
\end{enumerate}
\end{prop}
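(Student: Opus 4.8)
The plan is to read off everything from the universal properties in Definition \ref{Def: sum of subobjects}, using Proposition \ref{strict} to upgrade ordinary monomorphisms to strict ones and Lemma \ref{Lem: kernel cokernel} to identify a strict monomorphism with the kernel of its cokernel; no new idea is needed beyond the kind of diagram chasing already carried out in the preceding propositions.

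For \ref{Item:factor through}, I would first note that $p\circ f=0$ because $p=\operatorname{coker}(f)$, hence $(v\circ p)\circ f=0$; since $k$ is a kernel of $v\circ p$, the universal property of the kernel gives a unique $f_V\colon V\to V+W$ with $k\circ f_V=f$, and symmetrically a unique $g_W\colon W\to V+W$ with $k\circ g_W=g$. As $k$ is a kernel it is monic, so $f=k\circ f_V$ is a strict monomorphism factoring through the monomorphism $k$, and Proposition \ref{strict} forces $f_V$ (and likewise $g_W$) to be a strict monomorphism. For the cartesian square I would use that $V\times_E W$ exists by Proposition \ref{pull-back}; commutativity of the square follows from $k\circ f_V\circ\operatorname{pr}_1=f\circ\operatorname{pr}_1=g\circ\operatorname{pr}_2=k\circ g_W\circ\operatorname{pr}_2$ together with $k$ monic, and given any cone $(a\colon T\to V,\,b\colon T\to W)$ with $f_V\circ a=g_W\circ b$, composing with $k$ yields $f\circ a=g\circ b$, so the universal property of the fiber product over $E$ supplies the required unique factorization through $V\times_E W$.

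For \ref{Item:universal}, suppose $f=h\circ f_H$ and $g=h\circ g_H$ with $h\colon H\to E$ a strict monomorphism (the factorizations being unique since $h$ is monic). I would let $\pi\colon E\to E/H$ be a cokernel of $h$, so that $h=\ker(\pi)$ by Lemma \ref{Lem: kernel cokernel}; then $\pi\circ f=\pi\circ h\circ f_H=0$ and similarly $\pi\circ g=0$. The universal properties of $p=\operatorname{coker}(f)$ and $q=\operatorname{coker}(g)$ give $\pi_V\colon E/V\to E/H$ and $\pi_W\colon E/W\to E/H$ with $\pi_V\circ p=\pi=\pi_W\circ q$, and since $Q$ is the cofiber coproduct of $p$ and $q$ there is a unique $\pi_Q\colon Q\to E/H$ with $\pi_Q\circ v=\pi_V$, $\pi_Q\circ w=\pi_W$; hence $\pi_Q\circ(v\circ p)=\pi$. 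Because $k$ is a kernel of $v\circ p$, this gives $\pi\circ k=\pi_Q\circ(v\circ p)\circ k=0$, so $k$ factors uniquely through $h=\ker(\pi)$ as $k=h\circ s$; uniqueness of $s$ comes from $h$ being monic, and Proposition \ref{strict} applied to $k=h\circ s$ shows $s$ is a strict monomorphism.

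The only step that calls for genuine care is the detour through the pushout $Q$ in \ref{Item:universal}: one has to verify that $\pi_V$ and $\pi_W$ are compatible in the precise sense ($\pi_V\circ p=\pi_W\circ q$) demanded by the universal property of the cofiber coproduct, and then that the induced map $\pi_Q$ really restricts to $\pi$ along $v\circ p$, which is what makes $\pi\circ k$ vanish. Once that is in place, the rest is routine manipulation of kernel and cokernel universal properties.
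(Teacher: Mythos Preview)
Your proposal is correct and follows essentially the same route as the paper's proof: both use the kernel property of $k$ and Proposition~\ref{strict} for part \ref{Item:factor through}, reduce the cartesian square to the fiber product over $E$ via monicity of $k$, and for part \ref{Item:universal} factor the cokernel of $h$ through the pushout $Q$ (using Lemma~\ref{Lem: kernel cokernel} to identify $h$ with the kernel of its cokernel) before applying Proposition~\ref{strict} again. Your write-up is in fact slightly more explicit than the paper's in checking commutativity of the square and the compatibility $\pi_V\circ p=\pi_W\circ q$ needed for the pushout.
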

\begin{proof}
\ref{Item:factor through} By symmetry it suffices to prove the statement for $f$. Since $v\circ p\circ f=0$ and $k$ is a kernel of $v\circ p$, there exists a unique morphism $u:V\rightarrow V+W$ such that $f=k\circ u$. By Proposition \ref{strict}, we obtain that $u$ is a strict monomorphism. For the same reason, there exists a unique morphism $t:W\rightarrow V+W$ such that $g=k\circ t$, and $t$ is a strict monomorphism.\\
\[\
\xymatrix{\relax V+W\ar@{ >->}[rd]^-{k}\\
          V\ar@{ >->}[r]^-{f}\ar[u]^-{u}&E\ar@{->>}[r]^-{p} 
          \ar@{->>}[d]_-{q}
          &         E/V \ar@{->>}[d]^-{v}    \\
& E/W\ar@{->>}[r]_-w & Q         }\]
Let $\alpha:X\rightarrow V$ and $\beta:X\rightarrow W$ be morphisms such that $u\circ \alpha=t\circ \beta$, we have $f\circ \alpha=k\circ u\circ \alpha=g\circ \beta$, then there exists a unique $\gamma:X\rightarrow V\times_E W$ such that $\alpha=t'\circ \gamma$ and  $\beta=u'\circ \gamma$.
This means that the following diagram is cartesian.
\[\xymatrix{V\times_EW\ar^{t'}[r]\ar[d]_{u'}&V\ar[d]^u\\ W\ar[r]_t&V+W}\]
\ref{Item:universal} Let $r:E\rightarrow E/H$ be a cokernel of $h$. Since $f=h\circ f'$ (resp: $g=h\circ g'$), one has $r\circ f=0$ (resp. $r\circ g=0$). Then there exists a unique morphism $p':E/V\rightarrow E/H$ (resp. $q':E/W\rightarrow E/H$) such that $r=p'\circ p$ (resp. $r=q'\circ q$). Therefore, there exists a unique morphism $\delta: Q\rightarrow E/H$ such that $p'=\delta\circ v$ and $q'=\delta\circ w$ by the universal property of the push-out square. Hence, $r\circ k=0$.  Note that  the strict monomorphism $h$ is a kernel of $r$ by Lemma \ref{Lem: kernel cokernel}. Then there exists a unique morphism $s:V+W\rightarrow H$ such that $k=h\circ s$ because of the universal property of kernel. By Proposition \ref{strict}, $s$ is a strict monomorphism.
\[\xymatrix{&W\ar@{ >->}[d]_{g'}\ar@/^/[rdd]^-g\\
V\ar@{ >->}[r]^{f'}\ar@/_/[rrd]_f&H\ar@{ >->}[rd]|-h\\
&&E\ar@{->>}[rd]_-r\ar@{->>}[dd]_-q\ar@{->>}[rr]^-p&&E/V\ar@{->>}[dd]^-v\ar[ld]_{p'}\\&&&E/H\\
&&E/W\ar[ru]^{q'}\ar@{->>}[rr]_-w&&Q\ar[lu]_{\delta}}\]

\end{proof}

\begin{rmk}\label{Rem: cocartesian somme}
If the cofibre coproduct $P$ of the universal morphisms $V\leftarrow V\times_EW\rightarrow W$ exists, and the unique morphism $\lambda:P\rightarrow E$ given by the universal property of cofibre coproduct is a strict monomorphism, 
then $P$ is canonically isomorphic to $V+W$.
\end{rmk}

\section{Slope Functions}\label{Sec: Slope Functions}

In this section, we fix a small proto-abelian category $\mathcal C$.

\subsection{Slope function}
\begin{defn}
Let $E$ be an object of $\mathcal C$. We denote by $\mathcal S_E$  the category defined as follows: The objects of $\mathcal S_E$ are diagrams of the form
\[\xymatrix{W'\ar@{ >->}[r]^-f&W\ar@{ >->}[r]^-g&E},\]
where $f$ and $g$ are strict monomorphisms.  If \[\mathcal W_1=(\xymatrix@C-.3pc{W'_1\ar@{ >->}[r]^-{f_1}&W_1\ar@{ >->}[r]^-{g_1}&E})\quad\text{ and  }\quad \mathcal W_2=(\xymatrix@C-.3pc{W'_2\ar@{ >->}[r]^-{f_2}&W_2\ar@{ >->}[r]^-{g_2}&E})\]
are two objects of $\mathcal S_E$, the morphisms from $\mathcal W_1$ to $\mathcal W_2$  are couples of morphisms $(\alpha,\beta)$ such that the following diagram commutes.

\[\xymatrix{W'_1\ar@{ >->}[r]^-{f_1}\ar[d]_-\alpha&W_1\ar@{ >->}[r]^-{g_1}\ar[d]_-{\beta}&E\ar[d]^-{\operatorname{id}_E}\\
W'_2\ar@{ >->}[r]_-{f_2}&W_2\ar@{ >->}[r]_-{g_2}&E}\]
By Proposition \ref{strict},
both $\beta$ and $\alpha$ are strict monomorphisms. Since $g_2$ and $g_2\circ f_2$ are monomorphism, $(\alpha,\beta)$ is unique.
\end{defn}

Note that $(\alpha,\beta )$ is an isomorphism in $\mathcal S_E$ if and only if both morphisms $\alpha$ and $\beta$ are isomorphisms in $\mathcal C$. We denote by $\mathcal S_E^0$ the set of objects
\[\xymatrix{W'\ar@{ >->}[r]^-f&W\ar@{ >->}[r]^-g&E},\]
in $\mathcal S_E$ such that the morphism $f$ is \emph{not} an isomorphism.

\begin{defn}\label{Def: slope function}
Let $\Lambda$ be a  partially ordered set. We call \emph{slope function of $E$ valued in $\Lambda$} any map $\mu:\mathcal S_E^0\rightarrow\Lambda$ which satisfies the following
\emph{slope inequality}: for any couple \[\mathcal W_1=(\xymatrix@C-.3pc{W'_1\ar@{ >->}[r]^-{f_1}&W_1\ar@{ >->}[r]^-{g_1}&E})\quad\text{ and  }\quad \mathcal W_2=(\xymatrix@C-.3pc{W'_2\ar@{ >->}[r]^-{f_2}&W_2\ar@{ >->}[r]^-{g_2}&E})\] \label{slope}
of elements in $\mathcal S_E^0$, and any morphism $(\alpha,\beta)$ from $\mathcal W_1$ to $\mathcal W_2$ in the category $\mathcal S_E$, if the diagram
\[\xymatrix{W'_1\ar@{ >->}[r]^-{f_1}\ar@{ >->}[d]_-\alpha&W_1\ar@{ >->}[d]^-{\beta}\\
W'_2\ar@{ >->}[r]_-{f_2}&W_2}\]
is cartesian and the induced strict monomorphism $u:W_1+W_2'\rightarrow W_2$ is an isomorphism, then $\mu(\mathcal W_1)\leqslant\mu(\mathcal W_2)$.
\end{defn}

Note that this slope inequality implies that, if $(\alpha,\beta)$ is an isomorphism from $\mathcal W_1$ to $\mathcal W_2$ in the category $\mathcal S_E$, then $\mu(\mathcal W_1)=\mu(\mathcal W_2)$. In fact, if $\alpha$ and $\beta$ are isomorphisms, then both diagrams
\[\begin{gathered}\xymatrix{W'_1\ar@{ >->}[r]^-{f_1}\ar@{ >->}[d]_-\alpha&W_1\ar@{ >->}[d]^-{\beta}\\
W'_2\ar@{ >->}[r]_-{f_2}&W_2}\end{gathered}\quad\text{ and }\quad\begin{gathered}\xymatrix{W'_2\ar@{ >->}[r]^-{f_2}\ar@{ >->}[d]_-{\alpha^{-1}}&W_2\ar@{ >->}[d]^-{\beta^{-1}}\\
W'_1\ar@{ >->}[r]_-{f_1}&W_1}\end{gathered}\]
are cartesian. Moreover, the induced morphisms $W_1+W_2'\rightarrow W_2$ and $W_2+W_1'\rightarrow W_1$ are isomorphisms. Therefore, we obtain $\mu(\mathcal W_1)\leqslant\mu(\mathcal W_2)$ and $\mu(\mathcal W_2)\leqslant\mu(\mathcal W_1)$, namely $\mu(\mathcal W_1)=\mu(\mathcal W_2)$.

\begin{rmk}
Let \[\mathcal W_1=(\xymatrix@C-.3pc{W'_1\ar@{ >->}[r]^-{f_1}&W_1\ar@{ >->}[r]^-{g_1}&E})\quad\text{ and  }\quad \mathcal W_2=(\xymatrix@C-.3pc{W'_2\ar@{ >->}[r]^-{f_2}&W_2\ar@{ >->}[r]^-{g_2}&E})\] \label{slope}
 be elements in $\mathcal S_E^0$, $(\alpha,\beta):\mathcal W_1\rightarrow \mathcal W_2$ be a morphism in the category $\mathcal S_E$.
  Suppose the diagram below is cartesian and the induced morphism $\nu:W_1/W_1'\rightarrow W_2/W_2'$ is an isomorphism.
\[\xymatrix{W'_1\ar@{ >->}[r]^-{f_1}\ar@{ >->}[d]_-\alpha&W_1\ar@{ >->}[d]^-{\beta}\\
W'_2\ar@{ >->}[r]_-{f_2}&W_2}\]
Then the universal morphism $u:W'_2+W_1\rightarrow W_2$ is an isomorphism.
\end{rmk}

\begin{nota}\label{Not: mu of subquotient}
Let
$(W'\stackrel{f}{\rightarrowtail}W\stackrel{g}{\rightarrowtail}E)$
 be an element in $\mathcal S_E^0$,
 if there is no ambiguity on the morphisms $f$ and $g$, we denote by $\mu(W/W')$ the element \[\mu(\xymatrix{W'\ar@{ >->}[r]^-{f}&W\ar@{ >->}[r]^-{g}&E})\in\Lambda.\] If in addition $W'$ is a zero object, then $\mu(W/W')$ is also denoted by $\mu(W)$ for simplicity.
\end{nota}

\begin{defn}\label{Def: strong slope inequality}
Let $E$ be an object in $\mathcal C$, $\Lambda$ be a  partially ordered set and $\mu:\mathcal S_E^0\rightarrow\Lambda$ be a map. We say that $\mu$ \emph{satisfies the strong slope inequality} if for any couple \[\mathcal W_1=(\xymatrix@C-.3pc{W'_1\ar@{ >->}[r]^-{f_1}&W_1\ar@{ >->}[r]^-{g_1}&E})\quad\text{ and  }\quad \mathcal W_2=(\xymatrix@C-.3pc{W'_2\ar@{ >->}[r]^-{f_2}&W_2\ar@{ >->}[r]^-{g_2}&E})\]  of elements in $\mathcal S_E^0$, and any morphism $(\alpha,\beta)$ from $\mathcal W_1$ to $\mathcal W_2$ in the category $\mathcal S_E$, if the diagram \[\xymatrix{W'_1\ar@{ >->}[r]^-{f_1}\ar@{ >->}[d]_-\alpha&W_1\ar@{ >->}[d]^-{\beta}\\
W'_2\ar@{ >->}[r]_-{f_2}&W_2}\]
is cartesian, then the inequality $\mu(\mathcal W_1)\leqslant\mu(\mathcal W_2)$ always holds. Note that, if $\mu$ satisfies the strong slope inequality, then it is necessarily a slope function of $E$ valued in $\Lambda$.
\end{defn}

\begin{defn}
Let $(\Lambda,\leqslant)$ be a partially ordered set and $A$ be a subset of $\Lambda$. We say that $A$ admits a \emph{supremum} if there is an element of $\Lambda$, denoted by $\sup(A)$, which satisfies the following conditions:
\begin{enumerate}[label=\rm(\arabic*)]
\item $\sup(A)$ is an upper bound of $A$, namely for any $a\in A$ one has $a\leqslant\sup(A)$.
\item $\sup(A)$ is the least upper bound of $A$, namely any upper bound $b$ of $A$ satisfies $\sup(A)\leqslant b$.
\end{enumerate}
\end{defn}

\begin{prop}\label{prop: mu max}
We keep the notation of Definition \ref{Def: slope function}. Assume that any non-empty subset of $\Lambda$ admits a supremum. Let $\mu_{\max}:\mathcal S_E^0\rightarrow\Lambda$ be the map sending
$(\xymatrix@C-.3pc{W'\ar@{ >->}[r]^-{f}&W\ar@{ >->}[r]^-{g}&E})\in\mathcal S_E^0$
to 
\begin{equation*}
\begin{aligned}
&\sup\big\{\mu(\xymatrix@C-.3pc{W'\ar@{ >->}[r]^-{\widetilde f}&\widetilde W\ar@{ >->}[r]^-{\widetilde g}&E})\,:\,(\xymatrix@C-.3pc{W'\ar@ { >->}[r]^-{\widetilde f}&\widetilde W\ar@{ >->}[r]^-{\widetilde g}&E})\in \mathcal S_E^0,\\&
\text{\and there exists $h:\widetilde W\rightarrow W$ such that
$f=h\circ \widetilde f$ and $\widetilde g=g\circ h$
}\big\}.\\
\end{aligned}
\end{equation*}
 Then $\mu_{\max}$ is a slope function of $E$ which satisfies the strong slope inequality.
\end{prop}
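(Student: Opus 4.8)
The plan is to establish the stronger statement that $\mu_{\max}$ satisfies the \emph{strong slope inequality} of Definition~\ref{Def: strong slope inequality}; as noted there, any map satisfying the strong slope inequality is automatically a slope function, so this settles the proposition. First I would record that $\mu_{\max}$ is well defined: for $(\xymatrix@C-.3pc{W'\ar@{ >->}[r]^-{f}&W\ar@{ >->}[r]^-{g}&E})\in\mathcal S_E^0$ the set over which the supremum is taken contains $\mu(W/W')$ (take $\widetilde W=W$ and $h=\operatorname{id}_W$), hence is non-empty, and therefore has a supremum in $\Lambda$ by hypothesis. Moreover, in that defining set the morphism $h\colon\widetilde W\to W$ is automatically a strict monomorphism by Proposition~\ref{strict} (since $\widetilde g=g\circ h$ is one and $g$ is a monomorphism), so $\mu_{\max}(W/W')$ is simply the supremum of $\mu(\widetilde W/W')$ over all strict sub-objects $\widetilde W$ of $W$ containing $W'$ with $\widetilde W/W'$ non-zero.

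Next I fix objects $\mathcal W_i=(W_i'\rightarrowtail W_i\rightarrowtail E)$ of $\mathcal S_E^0$ together with a morphism $(\alpha,\beta)\colon\mathcal W_1\to\mathcal W_2$ in $\mathcal S_E$ whose left square is cartesian, and I must prove $\mu_{\max}(\mathcal W_1)\leqslant\mu_{\max}(\mathcal W_2)$. Since an inequality between two suprema follows once every member of the first family is dominated by some member of the second, it suffices to attach to each strict sub-object $\widetilde W_1$ of $W_1$ containing $W_1'$ with $\widetilde W_1/W_1'$ non-zero a strict sub-object $\widetilde W_2$ of $W_2$ containing $W_2'$ with $\widetilde W_2/W_2'$ non-zero such that $\mu(\widetilde W_1/W_1')\leqslant\mu(\widetilde W_2/W_2')$. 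The natural choice is $\widetilde W_2:=\widetilde W_1+W_2'$, the sum of sub-objects formed \emph{inside} $W_2$, where $\widetilde W_1$ is regarded as a strict sub-object of $W_2$ via the strict monomorphism $\widetilde W_1\rightarrowtail W_1\xrightarrow{\beta}W_2$ (it is a strict monomorphism by Proposition~\ref{composite}); existence and the relevant universal properties are supplied by Definition~\ref{Def: sum of subobjects} and Proposition~\ref{properties}. Write $\widetilde\beta\colon\widetilde W_1\rightarrowtail\widetilde W_2$ for the factorization of $\widetilde W_1\rightarrowtail W_2$ through $\widetilde W_2$.

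Three facts then have to be checked, after which the slope inequality for $\mu$ applied to the morphism $(\alpha,\widetilde\beta)$ from $(W_1'\rightarrowtail\widetilde W_1\rightarrowtail E)$ to $(W_2'\rightarrowtail\widetilde W_2\rightarrowtail E)$ gives $\mu(\widetilde W_1/W_1')\leqslant\mu(\widetilde W_2/W_2')$, and taking the supremum over $\widetilde W_1$ finishes the proof. (i) $W_2'\rightarrowtail\widetilde W_2$ is not an isomorphism: if it were, then $\widetilde W_1\rightarrowtail W_2$ would factor through $f_2$, and pulling this back through the original cartesian square would make $\widetilde W_1\rightarrowtail W_1$ factor through $f_1$, forcing $W_1'\rightarrowtail\widetilde W_1$ to be an isomorphism --- a contradiction; hence $(W_2'\rightarrowtail\widetilde W_2\rightarrowtail E)\in\mathcal S_E^0$ (its composite into $E$ is a strict monomorphism by Proposition~\ref{composite}). (ii) The square formed by $\alpha\colon W_1'\rightarrowtail W_2'$, $\widetilde\beta\colon\widetilde W_1\rightarrowtail\widetilde W_2$, the inclusion $W_1'\rightarrowtail\widetilde W_1$ and the inclusion $W_2'\rightarrowtail\widetilde W_2$ is cartesian: composing with the monomorphism $\widetilde W_2\rightarrowtail W_2$ and pasting pullbacks, the pullback of $f_2$ along $\widetilde W_1\rightarrowtail W_1\xrightarrow{\beta}W_2$ is the intersection of $W_1'$ and $\widetilde W_1$ inside $W_1$, which is $W_1'$ since $W_1'\subseteq\widetilde W_1$. (iii) The canonical strict monomorphism $u\colon\widetilde W_1+W_2'\to\widetilde W_2$, with the sum now formed \emph{inside} $\widetilde W_2$, is an isomorphism: composing $u$ with $\widetilde W_2\rightarrowtail W_2$ exhibits a strict sub-object of $W_2$ containing both $\widetilde W_1$ and $W_2'$, so the universal property of $\widetilde W_2=\widetilde W_1+W_2'$ in $W_2$ (Proposition~\ref{properties}) factors the inclusion $\widetilde W_2\rightarrowtail W_2$ through it, and cancelling the monomorphisms involved produces a two-sided inverse to $u$.

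The only part demanding genuine care is the interaction between the operation of forming sums of sub-objects and the choice of ambient object: in (iii) one must see that $\widetilde W_1+W_2'$ computed inside $\widetilde W_2$ is all of $\widetilde W_2$, and in (ii) that cartesianness is preserved when one passes between $\widetilde W_2$ and $W_2$. Both are formal consequences of the universal properties established in Section~3, but they are exactly where a careful diagram chase is actually needed; everything else reduces to bookkeeping and to the slope inequality already available for $\mu$.
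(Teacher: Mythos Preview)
Your proof is correct and follows essentially the same strategy as the paper's: given a factorization $W_1'\rightarrowtail\widetilde W_1\rightarrowtail W_1$, set $\widetilde W_2:=\widetilde W_1+W_2'$, verify that the resulting square is cartesian and that the sum condition holds, and apply the (ordinary) slope inequality for $\mu$ to bound $\mu(\widetilde W_1/W_1')$ by $\mu(\widetilde W_2/W_2')\leqslant\mu_{\max}(\mathcal W_2)$. The only cosmetic difference is that the paper forms the sum $\widetilde W_1+W_2'$ directly in $E$ (so your point (iii) is automatic and only cartesianness, obtained from Proposition~\ref{properties}, is checked), whereas you form it inside $W_2$ and then verify (iii) separately; you are also more careful than the paper about why $(W_2'\rightarrowtail\widetilde W_2\rightarrowtail E)$ lies in $\mathcal S_E^0$.
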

\begin{proof}
Let \[\mathcal W_1=(\xymatrix@C-.3pc{W'_1\ar@{ >->}[r]^-{f_1}&W_1\ar@{ >->}[r]^-{g_1}&E})\quad\text{ and  }\quad \mathcal W_2=(\xymatrix@C-.3pc{W'_2\ar@{ >->}[r]^-{f_2}&W_2\ar@{ >->}[r]^-{g_2}&E})\]  be elements in $\mathcal S_E^0$, and $(\alpha,\beta)$ be a morphism from $\mathcal W_1$ to $\mathcal W_2$ in the category $\mathcal S_E$, such that the diagram \[\xymatrix{W'_1\ar@{ >->}[r]^-{f_1}\ar@{ >->}[d]_-\alpha&W_1\ar@{ >->}[d]^-{\beta}\\
W'_2\ar@{ >->}[r]_-{f_2}&W_2}\]
is cartesian. Let
\[\xymatrix{W_1'\ar@{ >->}[r]^{\varphi_1}&\widetilde W_1\ar@{ >->}[r]^-{h_1}&W_1}\]
be a decomposition of $f_1$, where $\varphi_1$ and $h_1$ are strict monomorphisms. Let $\widetilde W_2=\widetilde W_1+W_2'$, $k:\widetilde W_2=\widetilde W_1+W_2'\longrightarrow E$ be the universal morphism and $\gamma:\widetilde W_1\rightarrow\widetilde W_2$ be the unique morphism such that $\gamma\circ k=g_1\circ h_1$ (see Proposition \ref{properties}). By Proposition \ref{properties}, the diagram
\[\xymatrix{W_1'\ar@{ >->}[d]_\alpha\ar@{ >->}[r]^-{\varphi_1}&\widetilde W_1\ar@{ >->}[d]^-{\gamma}\\
W_2'\ar@{ >->}[r]_{\varphi_2}&\widetilde{W}_2}\]
is cartesian.  Therefore, the slope inequality leads to
\[\mu(\xymatrix@C-.3pc{W'_1\ar@{ >->}[r]^-{\varphi_1}&\widetilde W_1\ar@{ >->}[r]^-{g_1\circ h_1}&E})\leqslant\mu(\xymatrix@C-.3pc{W'_2\ar@{ >->}[r]^-{\varphi_2}&\widetilde W_2\ar@{ >->}[r]^-{k}&E})\leqslant\mu_{\max}(\mathcal W_2).\]
By taking the supremum with respect to the decompositions of $f_1$, we obtain that $\mu_{\max}(\mathcal W_1)\leqslant\mu_{\max}(\mathcal W_2)$.
\end{proof}

\begin{rmk}
In Propsition \ref{prop: mu max}, if $\mu:\mathcal S_E^0\rightarrow\Lambda$ satisfies the strong slope inequality, then $\mu_{\max}=\mu$.
\end{rmk}

\subsection{Induced slope functions}

Let $\mathcal C$ be a  small  proto-abelian category, $E$ be an object of $\mathcal C$, $\Lambda$ be a partially ordered set and $\mu:\mathcal S_E^0\rightarrow\Lambda$ be a slope function of $E$ valued in $\Lambda$.\\

Let $\eta:F\rightarrow E$ be a strict monomorphism of  $\mathcal C$. We define a function $\mu_\eta:\mathcal S_F^0\rightarrow\Lambda$ as follows.
For any
\[\mathcal Y=(\xymatrix{Y'\ar@{ >->}[r]^-{u}&Y\ar@{ >->}[r]^-{v}&F})\in\mathcal S_F^0,\]
we let
\[\mu_\eta(\mathcal Y):=\mu(\xymatrix{Y'\ar@{ >->}[r]^-{u}&Y\ar@{ >->}[r]^-{\eta\circ v}&E}).\]
This is actually a slope function of $F$ valued in $\Lambda$. If $\mu$ satisfies the strong slope inequality, so does $\mu_{\eta}$.
If $\eta_1:F_1\rightarrow E$ and $\eta_2:F_2\rightarrow F_1$ are strict monomorphisms of $\mathcal C$, then $\eta_1\circ\eta_2$ is a strict monomorphism of $\mathcal C$ (by Proposition \ref{composite}), and one has
\[\mu_{\eta_1\circ\eta_2}=(\mu_{\eta_1})_{\eta_2}.\]

Let $\pi:E\rightarrow G$ be a strict epimorphism. We define a function $\mu^\pi:\mathcal S_{G}^0\rightarrow\Lambda$ as follows. For any object
 \[\mathcal Z=(\xymatrix{Z'\ar@{ >->}[r]^-{u}&Z\ar@{ >->}[r]^-{v}&G})\]
of $\mathcal S_{G}$, we denote by $\mathcal Z\times_{G}E$ an arbitrary object
\[\xymatrix{W'\ar@{ >->}[r]^-f&W\ar@{ >->}[r]^-g&E}\] of $\mathcal S_E$ which fits into a diagram
\[\xymatrix{W'\ar@{}[rd]|-{\square}\ar@{ >->}[r]^-f\ar@{->>}[d]&W\ar@{}[rd]|-{\square}\ar@{ >->}[r]^-g\ar@{->>}[d]&E\ar@{->>}[d]^-{\pi}\\
Z'\ar@{ >->}[r]_-{u}&Z\ar@{ >->}[r]_-{v}&G}\]
in which all squares are cartesian. If $\mathcal Z$ belongs to $\mathcal S_{G}^0$, then necessarily $\mathcal Z\times_GE$ belongs to $\mathcal S_E^0$ (see Remark \ref{Rem: pull-back which is an iso}). We then define \[\mu^\pi(\mathcal Z):=\mu(\mathcal Z\times_GE)=\mu(W/W')\in\Lambda.\]
Note that the value of $\mu^\pi(\mathcal Z)$ does not depend on the choice of the object $\mathcal Z\times_GE$, thanks to the slope inequality in  Definition \ref{Def: slope function}.

\begin{prop}\label{prop: quotient solpe}
 The function $\mu^\pi:\mathcal S_G^0\rightarrow\Lambda$ is a slope function of $G$ valued in $\Lambda$. Moreover, if $\mu$ satisfies the strong slope inequality, so does $\mu^\pi$.
\end{prop}
\begin{proof}
It suffices to check that $\mu^\pi$ satisfies the slope inequality.  Let
\[\mathcal Z_1=(\xymatrix@C-.3pc{Z_1'\ar@{ >->}[r]^-{u_1}&Z_1\ar@{ >->}[r]^-{v_1}&G})\quad\text{ and  }\quad \mathcal Z_2=(\xymatrix@C-.3pc{Z'_2\ar@{ >->}[r]^-{u_2}&Z_2\ar@{ >->}[r]^-{v_2}&G})\]
be elements of $\mathcal S_G^0$, $(\alpha,\beta)$ be a morphism from $\mathcal Z_1$ to $\mathcal Z_2$ in the category $\mathcal S_G$, such that the diagram
\begin{equation}\label{Equ: square u1u2}\begin{gathered}\xymatrix{Z_1'\ar@{}[rd]|-{\square}\ar@{ >->}[r]^{u_1}\ar@{ >->}_{\alpha}[d]&Z_1\ar@{ >->}^{\beta}[d]\\
Z'_2\ar@{ >->}_{u_2}[r]&Z_2}\end{gathered}\end{equation}
is cartesian and induces an isomorphism from $Z_1+Z_2'$ to $Z_2$.

Let \[\mathcal W_1=\mathcal Z_1\times_E G=(\xymatrix@C-.3pc{W_1'\ar@{ >->}[r]^-{f_1}&W_1\ar@{ >->}[r]^-{g_1}&E}),\mathcal W_2=\mathcal Z_2\times_E G=(\xymatrix@C-.3pc{W'_2\ar@{ >->}[r]^-{f_2}&W_2\ar@{ >->}[r]^-{g_2}&E}),\] where $W_1'= Z_1'\times_G E$,  $W_1=Z_1\times_G E$, $W_2'=Z'_2\times_G E$ and $W_2=Z_2\times_G E$. Consider the following cubic commutative diagram below,
\begin{equation}\label{Equ: cube diagram}\begin{gathered}\xymatrix@C+,2pc@R+,2pc@!0{\relax
  & W_1' \ar@{ >->}[rr]\ar@{>->}[dl] \ar@{->>}'[d][dd]
     &  &  W_1\ar@{->>}[dd] \ar@{>->}[dl]       \\
   \ar@{ >->}[rr]\ar@{->>}[dd]
   W'_2   & &  W_2\ar@{->>}[dd] &\\
  & Z_1' \ar@{ >->}'[r][rr]\ar@{>->}[dl]
      &  & Z_1\ar@{>->}[dl]                \\
  Z'_2 \ar@{ >->}[rr]
      & & Z_2  &      }\end{gathered}\end{equation}
where all vertical arrows and arrows in the upper square are universal morphisms.
The morphisms in the upper square are strict monomorphisms because of Proposition \ref{strict}, and the vertical morphisms are strict epimorphisms because of the Axiom\ref{Axiom c} of  proto-abelian category. Moreover, since the pull-back preserves projective limits (see \cite{MR2182076} Proposition 2.1.7), we obtain that
the diagram
\begin{equation}\label{Equ:W catesian}\begin{gathered}\xymatrix{W_1'\ar@{ >->}[r]^{f_1}\ar@{ >->}_{a}[d]&W_1\ar@{ >->}^{b}[d]\\
W'_2\ar@{ >->}_{f_2}[r]&W_2}\end{gathered}
\end{equation}
forming the upper square of the above cubic diagram is cartesian. Now, we want to show $W_2$ is canonically isomorphic to $W_1+W_2'$. Assume that $h:H\rightarrow E$,  $s:W_1\rightarrow H$, $t:W_2'\rightarrow H$ are strict monomorphisms where both $g_1=h\circ s$ and $g_2\circ f_2=h\circ t$ hold.
Note that $p_1:W_1\rightarrow Z_1$ is a strict epimorphism, by Proposition \ref{Pro: push-forward ep}, there exists a strict epimorphism $\pi':H\rightarrow F$ and a strict monomorphism $\varphi:Z_1\rightarrow F$ making the following diagram cocartesian.
\[\xymatrix{W_1\ar@{ >->}[r]^{s}\ar@{ ->>}_{p_1}[d]&H\ar@{ ->>}^{\pi'}[d]\\
	Z_1\ar@{ >->}_{\varphi}[r]&F}\] Since the following diagram
\[\xymatrix{W_1\ar@{ >->}[r]^{g_1}\ar@{ ->>}_{p_1}[d]&E\ar@{ ->>}^{\pi}[d]\\
	Z_1\ar@{ >->}_{v_1}[r]&G}\]
is also cocartesian, by the universal property of cocartesian diagram, there exists a unique morphism $\lambda: F\rightarrow G$ such that $v_1=\lambda\circ \varphi$ and all squares in the following diagram are cocartesian.
\[\xymatrix{W_1\ar@{ >->}[r]^-{s}\ar@{ ->>}[d]_-{p_1}& H\ar@{ >->}[r]^{h}\ar@{ ->>}_-{\pi'}[d]&E\ar@{ ->>}^{\pi}[d]\\
	Z_1\ar@{ >->}[r]_-{\varphi} & F\ar@{ >->}_{\lambda}[r]&G}\]Because $h$ is a strct monomorphism, one has that $\lambda$ is a strict monomorphism(See Definition \ref{proto} \ref{Axiom c}). Therefore, all squares in the above diagram are cartesian (See Proposition \ref{prop: cartesian}). Assume that $q_0:W_0\rightarrow W_2'$ is a kernel of $q_2:W_2'\rightarrow Z_2'$. Notice that \[\lambda\circ \pi'\circ t\circ q_0=\pi\circ g_2\circ f_2\circ q_0=v_2\circ u_2\circ q_2\circ q_0=0,\] $\pi'\circ t\circ q_0=0$ since $\lambda$ is a monomorphism. Since $q_2$ is a cokernel of $q_0$ (See Lemma \ref{Lem: kernel cokernel}), there exists a unique morphism $\psi: Z_2'\rightarrow F$ such that $\pi'\circ t=\psi\circ q_2$. Notice that $\lambda\circ\psi\circ q_2=v_2\circ u_2\circ q_2$,  $v_2\circ u_2=\lambda\circ \psi$ since $q_2$ is an epimorphism. By Proposition \ref{strict}, $\psi$ is a strict monomorphism.
 \[\xymatrix{W_0\ar[d]_-{q_0}&&\\
		W_2'\ar@{ >->}[r]^-{t}\ar@{ ->>}[d]_-{q_2}& H\ar@{ >->}[r]^{h}\ar@{ ->>}_-{\pi'}[d]&E\ar@{ ->>}^{\pi}[d]\\
	Z_2'\ar@{ >->}[r]_-{\psi} & F\ar@{ >->}_{\lambda}[r]&G}\]
Notice that \[\lambda\circ \varphi\circ u_1=v_1\circ u_1=v_2\circ u_2\circ \alpha=\lambda\circ \psi\circ \alpha,\]one has that $\varphi\circ u_1=\psi\circ \alpha$ since $\lambda$ is a monomorphism.
By Proposition \ref{properties} \ref{Item:universal}, there exists a unique morphism $\gamma:Z_2\rightarrow F$ such that $v_2=\lambda\circ\gamma$.
\[
		\xymatrix{
		  Z_1' \ar[r]^-{u_1} \ar[d]_-{\alpha}     &         Z_1 \ar[d]^-{\beta}  \ar@/^/[ddr]^-{\varphi} & \\
		  Z_2' \ar[r]_-{u_2} \ar@/_/[drr]_-{\psi} & Z_2\ar[dr]^-{\gamma}     &\\
		  & & F   }\]
Notice that the following diagrams \[\begin{gathered}\xymatrix{W_2\ar@{}[rd]|-{\square}\ar@{ >->}[r]^{g_2}\ar@{ ->>}_{p_2}[d]&E\ar@{ ->>}^{\pi}[d]\\
	Z_2\ar@{ >->}_{v_2}[r]&G}\end{gathered} \quad\text{ and }\quad
	\begin{gathered}\xymatrix{H\ar@{}[rd]|-{\square}\ar@{ >->}[r]^{h}\ar@{ ->>}_{\pi'}[d]&E\ar@{ ->>}^{\pi}[d]\\
	F\ar@{ >->}_{\lambda}[r]&G}\end{gathered}\]
are cartesian, there exists a unique morphism $k:W_2\rightarrow H$ such that $g_2=h\circ k$ and all squares in the following diagram are cartesian.
 \[\xymatrix{W_2\ar@{}[rd]|-{\square}\ar@{ >->}[r]^-{k}\ar@{ ->>}[d]_-{p_2}& H\ar@{}[rd]|-{\square}\ar@{ >->}[r]^{h}\ar@{ ->>}_-{\pi'}[d]&E\ar@{ ->>}^{\pi}[d]\\
	Z_2\ar@{ >->}[r]_-{\gamma} & F\ar@{ >->}_{\lambda}[r]&G}\]Notice that
\[h\circ t=g_2\circ f_2=h\circ k\circ f_2, \quad h\circ s=g_1=g_2\circ b=h\circ k\circ b.\]
 Therefore, $t=k\circ f_2$ and $s=k\circ b$ since $h$ is a monomorphism. Since $H$ is arbitrary, take $H=W_2'+W_1$ and in this situation $k$ is a strict monomorphism from $W_2$ to $W_1+W_2'$ such that $g_2=h\circ k$. Moreover, by Proposition \ref{properties} \ref{Item:universal}, there exists a unique morphism $k':W_1+W_2'\rightarrow W_2$ such that $h=g_2\circ k'$. One can check that $k$ and $k'$ are inverse to each other. 

Since $\mu$ is a slope function, we have $\mu(W_1/W_1')\leqslant\mu(W_2/W_2')$, namely $\mu^\pi(\mathcal Z_1)\leqslant\mu^\pi(\mathcal Z_2)$. Hence $\mu^\pi$ is also a slope function. The same argument shows that, if $\mu$ satisfies the strong slope inequality, so does $\mu^{\pi}$.
\end{proof}

If $\pi_1:E\rightarrow G_1$ and $\pi_2:G_1\rightarrow G_2$ are strict epimorphisms of $\mathcal C$, by Proposition \ref{composite} the composed morphism $\pi_2\circ\pi_1:E\rightarrow G_2$ is also a strict epimorphism. Moreover, one has $\mu^{\pi_2\circ\pi_1}=(\mu^{\pi_1})^{\pi_2}$.

\begin{prop}\label{prop:compactilblity}
Let
\[\xymatrix{F\ar@{}[rd]|-{\square}\ar@{ >->}[r]^-{\eta}\ar@{->>}_-{\pi}[d]&E\ar@{->>}^-{\pi'}[d]\\
H\ar@{ >->}_-{\eta'}[r]&G}\]
be a commutative diagram of  morphisms of $\mathcal C$. Assume that the horizontal arrows of the diagram are strict monomorphisms, the vertical arrows are strict epimorphisms and that the square is cartesian. If $\mu:\mathcal S_E^0\rightarrow\Lambda$ is a slope function, then
the following equality holds:
\[(\mu_\eta)^\pi=(\mu^{\pi'})_{\eta'}.\]
\end{prop}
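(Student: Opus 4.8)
The plan is to unwind both sides of the claimed identity $(\mu_\eta)^\pi = (\mu^{\pi'})_{\eta'}$ directly on an arbitrary object $\mathcal Z = (Z' \rightarrowtail Z \rightarrowtail H) \in \mathcal S_H^0$, and to show that the two subquotients of $E$ that compute the respective values are isomorphic as objects of $\mathcal S_E$, so that the slope inequality in Definition \ref{Def: slope function} forces equality of the $\mu$-values. First I would fix $\mathcal Z$ and form $\mathcal Z \times_H F$ using the strict epimorphism $\pi : F \twoheadrightarrow H$; call the resulting object $(Y' \rightarrowtail Y \rightarrowtail F) \in \mathcal S_F^0$. Applying the definition of $\mu^{\pi'}$ followed by that of $(\cdot)_{\eta'}$ shows that $(\mu^{\pi'})_{\eta'}(\mathcal Z)$ equals $\mu$ evaluated on $(\mathcal Z \times_H F)$ composed into $E$ via $\eta$, i.e. on the object $(Y' \rightarrowtail Y \stackrel{\eta \circ v}{\rightarrowtail} E)$ where $v : Y \rightarrowtail F$. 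On the other side, $(\mu_\eta)^\pi(\mathcal Z)$ is $\mu_\eta$ evaluated on $(\mathcal Z \times_H F)$, which by the definition of $\mu_\eta$ is again $\mu$ evaluated on the same object $(Y' \rightarrowtail Y \stackrel{\eta \circ v}{\rightarrowtail} E)$. Thus at the level of this bookkeeping the two sides are literally the same value of $\mu$, and the proposition follows once I check that the pullback constructions line up.

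The key geometric input is that the cartesian square in the hypothesis lets one build the pullback of $\mathcal Z$ to $E$ in two equivalent ways. Concretely, I would form the chain of cartesian squares defining $\mathcal Z \times_H F$ (pulling $Z' \rightarrowtail Z \rightarrowtail H$ back along $\pi : F \twoheadrightarrow H$), and then observe that composing each such square with the given cartesian square $F \times_G E = $ (top face) yields, by the pasting lemma for cartesian squares, exactly a tower of cartesian squares exhibiting $(\mathcal Z \times_H F) \hookrightarrow E$ as a pullback of $(\eta' \text{-image of } \mathcal Z) \hookrightarrow G$ along $\pi'$. More precisely, $\eta' \circ v' : Z \rightarrowtail G$ (where $v' : Z \rightarrowtail H$) together with $\eta' : H \rightarrowtail G$ lets one compute $\mu^{\pi'}$ of the pushed-forward object $(Z' \rightarrowtail Z \stackrel{\eta' \circ v'}{\rightarrowtail} G)$ by pulling back along $\pi'$, and pasting shows this pullback is canonically the same $(Y' \rightarrowtail Y \rightarrowtail E)$. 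The existence and strictness of all the relevant pullbacks is guaranteed by Proposition \ref{pull-back} and Remark \ref{Rem: pull-back which is an iso}, and the independence of $\mu^\pi$, $\mu^{\pi'}$ on the choice of pullback representative (noted in Definition \ref{def: quotient mu}) means I only need the objects to agree up to the isomorphisms in $\mathcal S_E$.

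I expect the main obstacle to be purely notational: keeping track of which strict monomorphism is being composed with which, and verifying that the two a priori different towers of cartesian squares produce objects of $\mathcal S_E$ that are isomorphic (not merely "morally equal"). The substantive lemma underneath is the pasting law for cartesian squares — if the right square and the outer rectangle are cartesian then so is the left, and conversely two stacked cartesian squares compose to a cartesian rectangle — which I would either cite from a standard reference (as the paper already does for limits, \cite{MR1712872}) or verify in a line from the universal property. Once the pasting is in place, the identification $(\mu_\eta)^\pi(\mathcal Z) = \mu(Y/Y') = (\mu^{\pi'})_{\eta'}(\mathcal Z)$ holds for every $\mathcal Z \in \mathcal S_H^0$, which is exactly the asserted equality of functions. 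No appeal to chain conditions or to the Harder--Narasimhan machinery is needed here; the statement is a compatibility of the two induction operations on slope functions.
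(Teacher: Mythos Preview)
Your proposal is correct and follows essentially the same approach as the paper's proof: pick an object of $\mathcal S_H^0$, pull it back to $F$ along $\pi$, and observe that the resulting object of $\mathcal S_E^0$ (after composing with $\eta$) computes both sides. The paper compresses your pasting-lemma discussion into a single diagram with three adjacent cartesian squares and reads off $(\mu_\eta)^\pi(\mathcal Y)=\mu(Z/Z')=(\mu^{\pi'})_{\eta'}(\mathcal Y)$ directly; your more explicit invocation of the pasting law for cartesian squares is exactly what underlies that diagram, so there is no genuine difference in strategy (aside from your swapping the names $Y$ and $Z$ relative to the paper).
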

\begin{proof}
Assume that \[\mathcal Y=(\xymatrix{Y'\ar@{ >->}[r]^-{u}&Y\ar@{ >->}[r]^-{v}&H})\in\mathcal S_H^0.\]
Let
$\mathcal Z=\mathcal Y\times_{H} F$, and thus it is an element in $\mathcal S_F^0$. We denote $\mathcal Z$ by \[(\xymatrix{Z'\ar@{ >->}[r]^-{u}&Z\ar@{ >->}[r]^-{v}&F})\] and it fits in the diagram below,
\[\xymatrix{Z'\ar@{}[rd]|-{\square}\ar@{ >->}[r]^-f\ar@{->>}[d]&Z\ar@{}[rd]|-{\square}\ar@{ >->}[r]^-g\ar@{->>}[d]&F\ar@{}[rd]|-{\square}\ar@{ >->}[r]^-{\eta}\ar@{->>}[d]_-{\pi}&E\ar@{->>}[d]^-{\pi'}\\
Y'\ar@{ >->}[r]_-{u}&Y\ar@{ >->}[r]_-{v}&H\ar@{ >->}[r]_-{\eta'}&G}\]
where each square is cartesian. By Proposition \ref{pull-back}, both $f$ and $g$ are strict monomorphism.
Hence,
\[(\mu_\eta)^\pi(\mathcal{Y})=\mu_\eta(Z/Z')=\mu(Z/Z')\]and
\[(\mu^{\pi'})_{\eta'}(\mathcal Y)=\mu^{\pi'}(Y/Y')=\mu(Z/Z').\]
Therefore,
 \[(\mu_\eta)^\pi(\mathcal Y)=(\mu^{\pi'})_{\eta'}(\mathcal Y).\]

\end{proof}

\begin{rmk}\label{Rem: induced slope}
The above compatibility result shows that a slope function $\mu$ of $E$ induces in a canonical way, for each strict subquotient $H$ of $E$, a slope function of $H$. By abuse of notation, in the case where there is no ambiguity on the underlying subquotient diagram of morphisms linking $H$ and $E$, we often denote by $\mu$ the induced slope function of $H$ for simplicity. Note that this convention fits well with Notation \ref{Not: mu of subquotient}.
\end{rmk}

\subsection{Slope functions over bounded lattices}
In this subsection, we give an alternative definition of slope functions which is more general. We fix a partially ordered set having at least two elements $(\Gamma,\leqslant)$. The following definition comes from order theory, and to make the notation look consistent with the notation of proto-abelian category, we use $E$ to represent maximal element in $\Gamma$ instead of $1$.
 \begin{defn}
 Let $E_1,E_2$ be elements in $\Gamma$. An element $A\in \Gamma$ is called the infimum of $E_1,E_2$ in $\Gamma$ if it satisfies the following conditions:
 \begin{enumerate}
 \item $A\leqslant E_1$ and $A\leqslant E_2$.
 \item For any $B\in \Gamma$ satisfying both $B\leqslant E_1$ and $B\leqslant E_2$, we have $B\leqslant A$.
 \end{enumerate}
 If the infimum of $E_1,E_2$ exists, it is unique. We denote it by $E_1\wedge E_2$.\\
 An element $A\in \Gamma$ is called the supremum of $E_1,E_2$ in $\Gamma$ if it satisfies the following conditions:
 \begin{enumerate}
 \item $A\geqslant E_1$ and $A\geqslant E_2$.
 \item For any $B\in \Gamma$ satisfying both $B\geqslant E_1$ and $B\geqslant E_2$, we have $B\geqslant A$.
 \end{enumerate}
 If the supremum of $E_1,E_2$ exists, it is unique. We denote it by $E_1\vee E_2$.\\
 We say $(\Gamma,\leqslant)$ is a bounded lattice if it satisfies the following conditions:
\begin{enumerate}
\item there exists a largest element $E$ in $\Gamma$, such that for any $A\in \Gamma$, $A\leqslant E$.
\item there exists a smallest element $\boldsymbol{0}$ in $\Gamma$, such that for any $A\in \Gamma$, $\boldsymbol{0}\leqslant A$ and  $\boldsymbol{0}\neq E$.
 \item
 For any $E_1,E_2\in \Gamma$,  both $E_1\wedge E_2$ and $E_1\vee E_2$ exist in $\Gamma$.
\end{enumerate}

 \end{defn}

\begin{defn}
Let $E$ be an object of a proto-abelian category $\mathcal C$. We call \emph{strict sub-object} of $E$ any strict monomorphism in the category $\mathcal C$, whose target object identifies with $E$. If a strict sub-object of $E$ is not an isomorphism, we say that it is \emph{proper}.

If $f:V\rightarrow E$ and $g:W\rightarrow E$ are two strict sub-objects of $E$, we call morphism from $f$ to $g$ any morphism $u:V\rightarrow W$ in $\mathcal C$ such that the following diagram commutes.
\[\xymatrix{V\ar@{ >->}[rd]_-{f}\ar[rr]^-{u}&&W\ar@{ >->}[ld]^-{g}\\&E}\]
By Proposition \ref{strict}, $u$ is necessarily a strict monomorphism. Moreover, $u$ is the unique morphism satisfying $g=f\circ u$ because $g$ is a monomorphism. If $u$ is an isomorphism, we say $V$ and $W$ are isomorphic as strict sub-objects of $E$.
\end{defn}

Let $f:V\rightarrow E$ be a strict monomorphism. If there is no ambiguity on the morphism $f$, for convenience we also say that $V$ is a strict sub-object of $E$. We emphasize that an underlying strict monomorphism is always considered when we talk about a strict sub-object.

 \begin{egs}\label{eg.admissible}
 \begin{enumerate}
 \item Let $X$ be a non-empty set,
 \[\Gamma=\{F : \text{$F$ is a subset of $X$}\}.\]
 Then, $(\Gamma,\subseteq )$ is a bounded lattice.
 \item \label{Item. proto}
 Let $E$ be a non-zero object in a small proto-abelian category $\mathcal{C}$. The equivalent classes of isomorphic strict sub-objects of $E$ form a set $\Gamma$. Let $V,W$ be two elements of $\Gamma$, $E_1$ (resp. $E_2$) be a representative of $V$ (resp. $W$). We denote $V$ by $[E_1]$ and $W$ by $[E_2]$. We say $V\leqslant W$ if there is a morphism from $E_1\rightarrowtail E $ to $E_2\rightarrowtail E$.
  One can check that this definition is independent of the choice of representatives and it defines a partial order relation over the set $\Gamma$.
 Moreover, $(\Gamma, \leqslant)$ is a bounded lattice since $[E]$ is the largest element of $\Gamma$, $[\boldsymbol{0}]$ is the smallest element of $\Gamma$, $[E_1\cap E_2]=[E_1]\wedge [E_2]$ and
     $[E_1+E_2]=[E_1]\vee [E_2]$ by Proposition \ref{properties} \ref{Item:universal}, where $E_1,E_2$ are strict sub-objects of $E$.
 \end{enumerate}
 \end{egs}

The following definitions could be viewed as the parallel version of what we established in proto-abelian category with regard to bounded lattices.

\begin{defn}
Let $\Gamma$ be a bounded lattice, $E$ be the largest element of $\Gamma$. We say $(E_1,E_2)$ be an admissible pair of $\Gamma$ if $E_1,E_2$ are elements in $\Gamma$ and $E_1\leqslant E_2$. We define a partially ordered set denoted by $\mathcal S'_\Gamma$, the underlying set is defined as follows:
\[\{(E_1,E_2): \text{$(E_1,E_2)$ is an admissible pair of $\Gamma$} \}.\]
If $\mathcal W_1=(W_1',W_1)$ , $\mathcal W_2=(W_2',W_2)$ are two elements in $\mathcal S'_\Gamma$,
 we say $\mathcal W_1 \leqslant \mathcal W_2$ if both $W_1'\leqslant W_2'$ and $W_1\leqslant W_2$ hold.
We denote by $\mathcal S_\Gamma^{'0}$ the set
 \[\{(E_1,E_2): \text{$(E_1,E_2)$ is an admissible pair of $\Gamma$ and $E_1\neq E_2$} \}.\] Moreover, the partial order relation is inherited from $\mathcal S'_\Gamma$.
\end{defn}

Let $E$ be a nonzero object of a small proto-abelian category $\mathcal C$, $\Gamma$ be the bounded lattice defined in Examples \ref{eg.admissible} (\ref{Item. proto}). The partially ordered set $\mathcal S'_\Gamma$ viewed as a category is equivalent to the category $\mathcal S_E$.

\begin{defn}\label{defn:slope inequality set}
Let $\Lambda$ be a  partially ordered set, $\Gamma$ be a bounded lattice and $E$ be the largest element of $\Gamma$. We call \emph{slope function of $\Gamma$ valued in $\Lambda$} any map $\mu:\mathcal S_\Gamma^{'0}\rightarrow\Lambda$ which satisfies the following
slope inequality: \\
 Let $(W'_1,W_1)$  and   $(W'_2,W_2)$
 be elements in $\mathcal S_\Gamma^{'0}$ satisfying $W'_1=W'_2\wedge W_1$ and $W_2=W_1\vee W_2'$,
then $\mu(W'_1,W_1)\leqslant\mu(W'_2, W_2)$. \\
If furthermore, we delete the condition $W_2=W_1\vee W_2'$, we also have $\mu(W'_1,W_1)\leqslant\mu(W'_2, W_2)$. We say the slope function satisfies the strong slope inequality.\\
We denote by $\mu(V/W)$ the element $\mu(W,V)$ to make the notation consistent. It should be emphasised that $V/W$ is just a notation which does not have a quotient meaning.
\end{defn}

\begin{rmk}
Let $E$ be an object in a small proto-abelian category $\mathcal C$, $\Lambda$ be a partially ordered set. Assume that $\mu$ is a slope function defined on $\mathcal S^0_E$, then it induces a slope function over $\mathcal S^{'0}_\Gamma$ and vice versa.
\end{rmk}

\section{Semi-stability and Harder-Narasimhan filtration}

In this section, we fix a small proto-abelian category $\mathcal C$ and a partially ordered set  $(\Lambda,\leqslant)$. As usual, for any $(x,y)\in \Lambda^2$,
\begin{enumerate}[label=\rm(\arabic*)]
\item $x\geqslant y$ denotes the condition $y\leqslant x$.
\item  $x<y$ denotes the condition $x\leqslant y$ but $x\neq y$.
\item $x>y$ denotes the condition $y<x$.
\end{enumerate}

\begin{assum}\label {assumption}
We assume that $\Lambda$ admits  a greatest element $+\infty$, and that any subset $A$ of $\Lambda$ admits an infimum, which we denote by $\inf A$. Recall that $\inf A$ is by definition an element of $\Lambda$ which satisfies the following properties:
\begin{enumerate}[label=\rm(\alph*)]
\item $\inf A$ is a lower bound of $A$, namely for any $a\in A$ one has $\inf A\leqslant a$.
\item if $m$ is an element of $\Lambda$ such that $\forall\,a\in A$, $m\leqslant a$, then $m\leqslant\inf A$.
\end{enumerate}
\end{assum}

\subsection{Minimal slope}
In this subsection, we give a definition of the minimal slope and give a property of it (see Proposition \ref{Pro: minimal slope inequality}).
\begin{defn}{\label{defn:slopemin}}
Given a \emph{non-zero} object $E$ of $\mathcal C$ and a slope function $\mu:\mathcal S_E^0\rightarrow\Lambda$, we call \emph{minimal slope of $E$ and} we denote by $\mu_{\min}(E)$ the value
\[\inf_{\pi:E\twoheadrightarrow G}\mu^{\pi}(\xymatrix{0\ar[r]&G\ar[r]^-{\operatorname{id}_G}&G}),\]
 where $\pi:E\twoheadrightarrow G$ runs over the set of all non-zero strict quotient objects of $E$. If $E$ is a zero object, by convention the minimal slope of $E$ is defined to be $+\infty$.\end{defn}

\begin{rmk}\label{slopemin}
Let $\pi:E\rightarrow G$ be a strict epimorphism, and $\eta:F\rightarrow E$ be a kernel of $\pi$. By definition one has $\mu^\pi(G/\boldsymbol{0})=\mu(E/F)$ (see Notation \ref{Not: mu of subquotient}). Hence,
\[\mu_{\min}(E)=\inf_{F\rightarrowtail E}\mu(E/F)\]
where $F\rightarrowtail E$ runs over the set of proper strict sub-objects of $E$.
\end{rmk}

We could define $\mu_{\min}$ for slope functions of bounded lattices $\Gamma$.
\begin{defn}\label{defn:slope min set}
Given a bounded lattice $\Gamma$ and a slope function $\mu:\mathcal S_\Gamma^{'0}\rightarrow\Lambda$. Assume that $E_1,E_2\in \Gamma$ satisfying $E_2< E_1$, we call \emph{minimal slope of $E_1/E_2$} and we denote by $\mu_{\min}(E_1/E_2)$ the value
\[\inf_{F\in \Gamma, E_2\leqslant F< E_1}\mu(E_1/F).\]
By Remark \ref{slopemin}, Definition \ref{defn:slope min set} is compatible with Definition \ref{defn:slopemin}.
\end{defn}

\begin{prop}[Minimal slope inequality]\label{Pro: minimal slope inequality}
Let $\Gamma$ be a bounded lattice, $E_1,E_2,H$ be elements in $\Gamma$ satisfying $E_2\leqslant H< E_1$, then for any slope function $\mu:\mathcal S_E^{'0}\rightarrow\Lambda$ one has
\[\mu_{\min}(E_1/E_2)\leqslant\mu_{\min}(E_1/H).\]
\end{prop}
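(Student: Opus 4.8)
The statement is purely a monotonicity property of infima, so the plan is to unwind the definition of $\mu_{\min}$ and compare index sets. By Definition~\ref{defn:slope min set} (whose hypotheses are met, since $E_2\subseteq H\subsetneq E_1$ forces both $E_2\subsetneq E_1$ and $H\subsetneq E_1$), one has
\[
\mu_{\min}(E_1/E_2)=\inf_{F\in A}\mu(E_1/F),\qquad \mu_{\min}(E_1/H)=\inf_{F\in B}\mu(E_1/F),
\]
where
\[
A:=\{F\in\Gamma : E_2\subseteq F\subsetneq E_1\},\qquad B:=\{F\in\Gamma : H\subseteq F\subsetneq E_1\}.
\]
Both infima exist by Assumption~\ref{assumption}, and for every $F$ occurring above the pair $(F,E_1)$ is indeed an object of $\mathcal S_E^{'0}$, so that $\mu(E_1/F)$ is defined.

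First I would verify the inclusion $B\subseteq A$: if $F\in B$, then $H\subseteq F$, and combining with the hypothesis $E_2\subseteq H$ yields $E_2\subseteq F$ by transitivity; the remaining constraints $F\in\Gamma$ and $F\subsetneq E_1$ are the same in the two definitions, so $F\in A$. Next I would invoke the elementary fact that a larger index set has a smaller infimum: since $\inf_{F\in A}\mu(E_1/F)$ is a lower bound for $\{\mu(E_1/F):F\in A\}$, it is \emph{a fortiori} a lower bound for its subset $\{\mu(E_1/F):F\in B\}$, whence, by the defining property of the infimum recalled in Assumption~\ref{assumption}, $\inf_{F\in A}\mu(E_1/F)\leqslant\inf_{F\in B}\mu(E_1/F)$. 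This is exactly the asserted inequality $\mu_{\min}(E_1/E_2)\leqslant\mu_{\min}(E_1/H)$.

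There is no real obstacle here; the argument uses only set-theoretic monotonicity of infima together with the trivial chain $E_2\subseteq H\subseteq F$. The one point perhaps worth a remark is that the index sets are nonempty — indeed $H\in B\subseteq A$, since $H\in\Gamma$ and $H\subsetneq E_1$ — but this is not even needed for the monotonicity step, which goes through verbatim for empty index sets under the convention $\inf\emptyset=+\infty$ forced by Assumption~\ref{assumption}.
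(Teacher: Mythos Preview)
Your proof is correct and follows essentially the same approach as the paper's: both arguments reduce to the inclusion of index sets $\{F\in\Gamma:H\subseteq F\subsetneq E_1\}\subseteq\{F\in\Gamma:E_2\subseteq F\subsetneq E_1\}$ and the resulting monotonicity of infima. The paper compresses this into a single displayed chain of inequalities, whereas you spell out the inclusion and the infimum comparison explicitly.
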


\begin{proof}
By Definition \ref{defn:slope min set},
\[\mu_{\min}(E_1/H)=\inf_{H\leqslant H'< E_1}\mu(E_1/H')\geqslant \]
\[\inf_{E_2\leqslant H'< E_1}\mu(E_1/H')=\mu_{\min}(E_1/E_2).\]
Therefore, $\mu_{\min}(E_1/H)\geqslant \mu_{\min}(E_1/E_2)$.
\end{proof}

\subsection{Harder-Narasimhan filtration}

\begin{prop}\label{main}
Let $\Gamma$ be a bounded lattice, $E$ be the largest element of $\Gamma$ and $\mu$ be a slope function of $\Gamma$ satisfying strong slope inequality.
If $H$, $V$ and $W$ are elements of $\Gamma$ satisfying $H< V$ and $H< W$, then the following inequality holds:
\[\mu_{\min}((V\vee W)/H)\geqslant \inf\{\mu_{\min}(V/H),\mu_{\min}(W/H)\}.\]
\end{prop}
\begin{proof}
By definition,
\[\mu_{\min}((V\vee W)/H)=\inf_{H\leqslant U< (V\vee W)}\mu( (V\vee W)/U).\]
We claim that, either $U\wedge V< V$ or $U\wedge W< W$ holds. In fact, otherwise both $V\leqslant U$ and $W\leqslant U$ hold.  Therefore, $V\vee W\leqslant U$. However, by assumption $U<V\vee W$ which leads to a contradiction.
Without loss of generality, assume that $U\wedge V< V$.
Since both $U\leqslant V\vee W$ and $V\leqslant V\vee W$ hold,
by the strong slope inequality of Definition \ref{defn:slope inequality set}, one has
 \[\mu( (V\vee W)/U)\geqslant \mu(V/(U\wedge V)).\]
Moreover, since $U\wedge V\geqslant H$,
 \[\mu( V/(U\wedge V))\geqslant \mu_{\min}(V/H)\geqslant\inf\{\mu_{\min}(V/H),\mu_{\min}(W/H)\}.\]
Hence we obtain that \[\mu((V\vee W)/U)\geqslant\inf\{\mu_{\min}(V/H),\mu_{\min}(W/H)\}.\]
Taking the infimum with respect to $U$, we obtain the required inequality.
\end{proof}

The following Noetherian and Artinian conditions could be viewed as a generalization of existence of a rank function in the classical Harder-Narasimhan theory.

\begin{defn}{\label{defn:NT and AT}}Let $E$ be an object of $\mathcal C$. We say that $E$ satisfies the \emph{Noetherian condition} if for any chain
\[\xymatrix{E_0\ar[r]^-{f_0}&E_1\ar[r]^-{f_1}&E_2\ar[r]&\cdots\cdots\ar[r]&E_n\ar[r]^-{f_n}&E_{n+1}\ar[r]&\cdots\cdots}\]
of morphisms of strict sub-objects of $E$, $f_n$ is an isomorphism for all sufficiently large $n$. We say that $E$ satisfies the \emph{Artinian condition} if for any chain
\[\xymatrix{\cdots\cdots\ar[r]&F_{n+1}\ar[r]^-{g_n}&F_n\ar[r]&\cdots\cdots\ar[r]&F_2\ar[r]^-{g_1}&F_1\ar[r]&F_0}\]
of morphisms of strict sub-objects of $E$, $g_n$ is an isomorphism for all sufficiently large $n$.
\end{defn}

\begin{rmk}\label{rem: noetherian artinian}
Let $\pi:E\rightarrow G$ be a strict epimorphism in $\mathcal C$. By Remark \ref{Rem: pull-back which is an iso}, we obtain that, if $E$ satisfies the Noetherian condition (resp. Artinian condition), so does $G$.
\end{rmk}

\begin{eg}
Let $M$ be a Noetherian (resp. Artinian) module over a commutative ring $R$. If we regard $M$ as an object of the category of $R$-modules, it satisfies the Noetherian condition (resp. Artinian condition). Note that, if $R$ is a field, then any finite-dimensional vector space over $R$ satisfies both Noetherian and Artinian conditions.\\
$\boldsymbol{NB}$: The Noetherian condition (resp. Artinian condition) in Definition \ref{defn:NT and AT} is a Noetherian condition (resp. Artinian condition) for strict monomorphisms. In proto-abelian category, a monomorphism may not be a strict monomorphism.
\end{eg}

\begin{defn}
Let $\Gamma$ be a bounded lattice. We say that $\Gamma$ satisfies the \emph{Noetherian condition} if for any chain
\[E_0\leqslant E_1\leqslant E_2\leqslant \cdots\cdots \leqslant E_n\leqslant E_{n+1}\leqslant\cdots\cdots\]
of elements in $\Gamma$, $E_{n}=E_{n+1}=\cdots$ for sufficiently large $n$. We say that $\Gamma$ satisfies the \emph{Artinian condition} if for any chain
\[\cdots\cdots\leqslant F_{n+1}\leqslant F_n\leqslant\cdots\cdots\leqslant F_2\leqslant F_1\leqslant F_0\]
 of elements in $\Gamma$, $F_n=F_{n+1}=\cdots$ for sufficiently large $n$.
\end{defn}

Now, we give a definition of semi-stability.

\begin{defn}\label{def: semi-stability}
Let $E$ be a non-zero object of $\mathcal C$ and $\mu:\mathcal S_E^0\rightarrow\Lambda$ be a slope function. We say that $E$ is \emph{semi-stable} with respect to the slope function $\mu$ if, for any strict monomorphism $\eta:F\rightarrowtail E$ such that $F$ is non-zero,
$\mu_{\min}(F)$ is not strictly larger than $\mu_{\min}(E)$, namely the condition
\[\neg(\mu_{\min}(E)<\mu_{\min}(F))\]
holds.
Note that, for each strict subquotient $H$ of $E$, there is a slope function of $H$ induced by $\mu$ as explained in Remark \ref{Rem: induced slope}. For simplicity we say that $H$ is \emph{semi-stable} with respect to $\mu$ if it is semi-stable with respect to the induced slope function.
\end{defn}

 Assume that $\Lambda$ is a totally ordered set, $E$ is semi-stable with respect to the slope function $\mu$ if and only if for any strict monomorphism $\eta:F\rightarrowtail E$ such that $F$ is non-zero, one has
\[\mu_{\min}(F)\leqslant \mu_{\min}(E).\]

\begin{defn}
Let $\Gamma$ be a bounded lattice, $E$ be the largest element of $\Gamma$ and $\mu:\mathcal S_\Gamma^{'0}\rightarrow\Lambda$ be a slope function. We say that an admissible pair $E_1/E_2$ is \emph{semi-stable} with respect to the slope function $\mu$ if, for any $F\in \Gamma$  satisfying $E_2< F\leqslant E_1$,
$\mu_{\min}(F/E_2)$ is not strictly larger than $\mu_{\min}(E_1/E_2)$, namely the condition
\[\neg(\mu_{\min}(E_1/E_2)<\mu_{\min}(F/E_2))\]
holds. The definition is consistent with Definition \ref{def: semi-stability} because of Proposition \ref{prop:compactilblity}.
\end{defn}

\begin{lemma}\label{main2}
Let $\Gamma$ be a bounded lattice, $E$ be the largest element of $\Gamma$ and 
$\mu:\mathcal S^{'0}_\Gamma\rightarrow \Lambda$ be a slope function satisfying strong slope inequality. 
Assume that $\Gamma$ satisfies Noetherian and Artinian conditions. Then for any $(N,M)\in \mathcal S_\Gamma^{'0}$, there exists an element $E_1\in \Gamma$ such that $N< E_1\leqslant M$ and $E_1$ satisfies the following conditions:
\begin{enumerate}[label=\rm(\arabic*)]
\item $E_1/N$ is semi-stable.
\item If $G\in \Gamma$ satisfies both $N< G\leqslant M$ and $\mu_{\min}(G/N)\geqslant \mu_{\min}(E_1/N)$, then $G\leqslant E_1$.
\end{enumerate}
In particular, $\mu_{\min}(E_1/N)$ is maximal in the set \begin{equation*}\{\mu_{\min}(W/N): N< W\leqslant M\; \}.\end{equation*}

\end{lemma}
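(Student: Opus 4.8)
The plan is to build $E_1$ by a descending induction that starts at $M$ and, as long as the current term fails to be semi-stable over $N$, replaces it by a strictly smaller term whose minimal slope over $N$ is strictly larger; the \emph{Artinian} condition will force the induction to terminate, and the stopping term will be $E_1$. The engine of the induction is the following observation, which I would establish first. For $F\in\Gamma$ with $N\subsetneq F\subseteq M$, consider
\[\mathcal{G}(F):=\{\,V\in\Gamma\;:\;N\subsetneq V\subseteq M,\ \mu_{\min}(V/N)\geqslant\mu_{\min}(F/N)\,\}.\]
This collection is non-empty (it contains $F$) and closed under $\sup\{\cdot,\cdot\}$: if $V,V'\in\mathcal{G}(F)$, then $\sup\{V,V'\}\subseteq M$ because $M$ is an upper bound of $V\cup V'$ in $\Gamma$, one has $N\subsetneq\sup\{V,V'\}$, and Proposition \ref{main} gives $\mu_{\min}(\sup\{V,V'\}/N)\geqslant\inf\{\mu_{\min}(V/N),\mu_{\min}(V'/N)\}\geqslant\mu_{\min}(F/N)$. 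Since the \emph{Noetherian} condition guarantees that $\mathcal{G}(F)$ has a maximal element, and closure under binary $\sup$ promotes a maximal element to a greatest one, $\mathcal{G}(F)$ has a maximum $W(F)$; it satisfies $\mu_{\min}(W(F)/N)\geqslant\mu_{\min}(F/N)$ and contains every $V\in\Gamma$ with $N\subsetneq V\subseteq M$ and $\mu_{\min}(V/N)\geqslant\mu_{\min}(F/N)$.

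Next I would run the induction. Put $W_0:=M$. Given $W_i$ with $N\subsetneq W_i\subseteq M$: if $W_i/N$ is semi-stable, stop and set $E_1:=W_i$; otherwise choose a witness $F_{i+1}\in\Gamma$ with $N\subsetneq F_{i+1}\subseteq W_i$ and $\mu_{\min}(F_{i+1}/N)>\mu_{\min}(W_i/N)$ (so $F_{i+1}\neq W_i$), and set $W_{i+1}:=W(F_{i+1})$, so that $\mu_{\min}(W_{i+1}/N)\geqslant\mu_{\min}(F_{i+1}/N)>\mu_{\min}(W_i/N)$. The key point is that $W_{i+1}\subsetneq W_i$: for $i=0$ this holds since $W_1\subseteq M=W_0$ while the minimal slopes differ; for $i\geqslant 1$ one has $W_i=W(F_i)$, hence $\mu_{\min}(W_i/N)\geqslant\mu_{\min}(F_i/N)$, so $\mu_{\min}(W_{i+1}/N)>\mu_{\min}(W_i/N)\geqslant\mu_{\min}(F_i/N)$, whence $W_{i+1}\in\mathcal{G}(F_i)$ and $W_{i+1}\subseteq W(F_i)=W_i$, again with $W_{i+1}\neq W_i$. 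An unending induction would then yield an infinite strictly decreasing chain $M=W_0\supsetneq W_1\supsetneq\cdots$ in $\Gamma$, contradicting the Artinian condition; so the induction stops, say at $E_1=W_k$ with $k\geqslant 0$ and $N\subsetneq E_1\subseteq M$.

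It then remains to check the two stated properties. Property (1) is the stopping criterion. For property (2), take $G\in\Gamma$ with $N\subsetneq G\subseteq M$ and $\mu_{\min}(G/N)\geqslant\mu_{\min}(E_1/N)$: if $k=0$ then $E_1=M\supseteq G$; if $k\geqslant 1$ then $E_1=W(F_k)$, so $\mu_{\min}(E_1/N)\geqslant\mu_{\min}(F_k/N)$, hence $\mu_{\min}(G/N)\geqslant\mu_{\min}(F_k/N)$, so $G\in\mathcal{G}(F_k)$ and $G\subseteq W(F_k)=E_1$. The final ``in particular'' follows formally: were there some $W\in\Gamma$ with $N\subsetneq W\subseteq M$ and $\mu_{\min}(W/N)>\mu_{\min}(E_1/N)$, property (2) would give $W\subseteq E_1$, and semi-stability of $E_1/N$ applied to $W$ would yield $\neg(\mu_{\min}(E_1/N)<\mu_{\min}(W/N))$, a contradiction.

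The step I expect to be the main obstacle is the construction of the \emph{maximum} $W(F)$: the Noetherian condition by itself yields only a maximal element of $\mathcal{G}(F)$, and it is the super-additivity of $\mu_{\min}$ in Proposition \ref{main} --- which itself rests on the strong slope inequality --- that upgrades maximality to a greatest element. This greatest-element property is used twice: it is what makes the chain $W_0\supsetneq W_1\supsetneq\cdots$ strictly decreasing, so that the Artinian condition can be invoked, and it is also exactly what yields the universal property (2). Everything else is bookkeeping with the partial order on $\Lambda$ and with the two chain conditions.
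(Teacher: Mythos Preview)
Your proof is correct and follows the same overall strategy as the paper: build a strictly decreasing chain $M=W_0\supsetneq W_1\supsetneq\cdots$ in $\Gamma$ with strictly increasing $\mu_{\min}(W_i/N)$, use the Noetherian condition to select each term and the Artinian condition to force termination at a semi-stable $E_1$. The one organizational difference is that you invoke Proposition~\ref{main} up front, to show each $\mathcal{G}(F)$ is $\sup$-closed and hence has a genuine greatest element $W(F)$; this makes both the descent $W_{i+1}\subseteq W_i$ and property~(2) immediate. The paper instead chooses each $W_i$ by a weaker local maximality (maximal inside $W_{i-1}$ among sets of at least the same minimal slope, no $\sup$-closure needed at this stage) and defers the use of Proposition~\ref{main} to a final short induction proving $G\subseteq W_i$ for every $i$. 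The ingredients and the logical skeleton are the same; your packaging via $W(F)$ is a clean variant.
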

\begin{proof}
The statement is trivial when $M/N$ is semi-stable with respect to $\mu$ (it suffices to take $E_1=M$). In the following, we suppose that $M/N$ is not semi-stable, which means that there exists at least one element $F_1\in \Gamma$ such that $N< F_1\leqslant M$ and  $\mu_{\min}(F_1/N)>\mu_{\min}(M/N)$. We claim that there exists an element $W_1\in \Gamma$ which satisfies the following properties:
\begin{enumerate}[label=\rm(\arabic*)]
\item $N< W_1$.
\item $\mu_{\min}(M/N)<\mu_{\min}(W_1/N)$.
\item If $W_1'\in \Gamma$ satisfies both $N< W_1'\leqslant M$ and $\mu_{\min}(W_1/N)\leqslant\mu_{\min}(W_1'/N)$, then $W_1$ is not strictly less than $W_1'$.
\end{enumerate} Otherwise we can construct a sequence
\[F_1< F_2<\cdots\cdots< F_n< F_{n+1}<\cdots,\]
   such that each $F_i\leqslant M$ and \[\mu_{\min}(F_1/N)\leqslant\mu_{\min}(F_2/N)\leqslant\cdots\leqslant\mu_{\min}(F_n/N)\leqslant \cdots.\]
This contradicts the Noetherian condition on $\Gamma$.

We now replace $M/N$ by $W_1/N $ and iterate the above construction. By the Artinian condition on $\Gamma$ we obtain a finite sequence of elements in $\Gamma$,
\[W_k\leqslant W_{k-1}\leqslant \cdots\cdots\leqslant W_1\leqslant W_0=M,\]
such that $W_k/N$ is semi-stable,
\[\mu_{\min}(W_k/N)>\ldots>\mu_{\min}(W_1/N)>\mu_{\min}(M/N),\]
and for any $i\in\{1,\ldots,k\}$, there is no $W'_{i}$ satisfying both $W_i< W'_i\leqslant W_{i-1}$ and $\mu_{\min}(W_i/N)\leqslant \mu_{\min}(W_i'/N)$.

  Assume that $G\in \Gamma$ satisfies both $N< G\leqslant M$ and $\mu_{\min}(G/N)\geqslant \mu_{\min}(W_k/N)$. We prove by induction that $G\leqslant W_i$, where $0\leqslant i\leqslant k$.  Clearly the statement is true when $i=0$. Assume that $G\leqslant W_{i-1}$ holds. By Proposition  \ref{main} one has  \[\mu_{\min}((W_{i}\vee G)/N)\geqslant\inf\{\mu_{\min}(W_{i}/N),\mu_{\min}(G/N)\}=\mu_{\min}(W_{i}/N).\]
 Since $W_i\leqslant W_{i}\vee G \leqslant W_{i-1}$, one has $W_{i}= W_{i}\vee G$. In other words, $G\leqslant W_{i}$. Take $i=k$, one has $G\leqslant W_k$. Hence $W_k$ satisfies the required conditions.
 Moreover, $\mu_{\min}(G/N)=\mu_{\min}(W_k/N)$ since $W_k/N$ is semi-stable. Therefore, $\mu_{\min}(W_k/N)$ is a maximal element in the set $\{\mu_{\min}(W/N): N< W\leqslant M \}$ . \\
\end{proof}

\begin{defn}Let $E$ be a non-zero object of $\mathcal C$. We call a \emph{filtration} of $E$ a finite sequence of morphisms in $\mathcal{C}$, \[\xymatrix{\boldsymbol{0}=E_0\ar@{ >->}[r]& E_1\ar@{ >->}[r]&\cdots\cdots\ar@{ >->}[r]& E_n=E,}\] such that $E_i$ is a proper strict sub-object of $E_{i+1}$ for any $i\in\{0,\ldots,n-1\}$.
Let \[\boldsymbol{0}=E_0\rightarrowtail E_1\rightarrowtail \cdots \rightarrowtail E_n=E\quad
\text{and}\quad
\boldsymbol{0}=F_0\rightarrowtail F_1\rightarrowtail \cdots\rightarrowtail F_m=E\] be filtrations of $E$. We say they are isomorphic if
the equality $m=n$ holds, and there exists isomorphisms $f_i:E_i\rightarrow F_i$, $i\in\{0,\ldots,n\}$ satisfying $f_n=\operatorname{id}_E$ and making the following diagrams commute,
 \[\xymatrix{E_{i-1}\ar[d]_-{f_{i-1}}\ar@{ >->}[r]&E_{i}\ar[d]^-{f_{i}}\\
F_{i-1}\ar@{ >->}[r]&F_{i}}\]
where $i\in\{1,\ldots,n\}$.
\end{defn}

\begin{defn}Let $\Gamma$ be a bounded lattice and $E$ be the largest element of $\Gamma$. We call a \emph{filtration} of $E$ a finite sequence of elements in $\Gamma$, \[\boldsymbol{0}=E_0< E_1< \cdots\cdots< E_n=E,\]
where $\boldsymbol{0}$ is the smallest element of $\Gamma$.
\end{defn}

\begin{thm}\label{existence}(Existence of Harder-Narasimhan filtration)
Let $E$ be a non-zero object of a small proto-abelian category $\mathcal{C}$, which satisfies Noetherian and Artinian conditions, and let $\mu:S^0_E\rightarrow \Lambda$ be a slope function satisfying the strong slope inequality.
Then there exists a filtration of $E$,
\[\xymatrix{0=E_0\ar@{ >->}[r]& E_1\ar@{ >->}[r]&\cdots\cdots\ar@{ >->}[r]& E_n=E,}\]
which satisfies the following properties:
\begin{enumerate}[label=\rm(\arabic*)]
\item\label{Item: semi-stable subquotient} For any $i\in\{1,\ldots,n\}$, $E_{i}/E_{i-1}$ is semi-stable.
\item\label{Item: mu min is maximal element} Assume that $E_{i-1}\rightarrowtail E$ factorizes through strict sub-object $G_i\rightarrowtail E$. If $\mu_{\min}(G_i/E_{i-1})\geqslant\mu_{\min}(E_i/E_{i-1})$, then $G_i\rightarrowtail E$ factorizes through $E_i\rightarrowtail E$.
In particular, each $\mu_{\min}(E_i/E_{i-1})$ is a maximal element of the set of $\mu_{\min}(F_i/E_{i-1})$, 
where $F_i/E_{i-1}$ is a non-zero strict sub-objects of $E/E_{i-1}$, and for any decomposition $E_{i-1}\rightarrowtail F_i\rightarrowtail E$ of the strict sub-object $E_{i-1}\rightarrowtail E$ such that $E_{i-1}\rightarrowtail F_i$ is not an isomorphism, one has $\mu_{\min}(E_i/E_{i-1})\not\leqslant\mu_{\min}(F_i/E_{i-1})$.

\item One has
\begin{equation}\label{Equ: not increasing}
\forall\,i\in\{1,\ldots,n-1\},\quad\mu_{\min}(E_i/E_{i-1})\not\leqslant\mu_{\min}(E_{i+1}/E_{i}).\end{equation}
In particular, if $\Lambda$ is a totally ordered set, then the following inequalities hold:
\[\mu_{\min}(E_1/E_0)>\mu_{\min}(E_2/E_1)>\ldots>\mu_{\min}(E_n/E_{n-1}).\]
\end{enumerate}

\end{thm}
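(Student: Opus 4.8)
The plan is to transport the statement into the set-theoretic model of the preceding subsection and then run an induction driven by Lemma~\ref{main2}. First I would set $\Gamma=\{\mathcal F(\eta):\eta\text{ a strict sub-object of }E\}$, which is an admissible collection of subsets of $\mathcal F(E)$ by Example~\ref{eg.admissible}(\ref{Item. proto}). Under the equivalence $\mathcal S_E\simeq\mathcal S'_{\mathcal F(E)}$ furnished by $\mathcal{\tilde F}$ (see also Corollary~\ref{cor:set and category}), the slope function $\mu$ corresponds to a slope function on $\mathcal S^{'0}_{\mathcal F(E)}$ which still satisfies the strong slope inequality; the Noetherian and Artinian conditions on $E$ become the corresponding chain conditions on $\Gamma$; and, using the compatibility results for induced slope functions (Remark~\ref{Rem: induced slope}) together with Remark~\ref{slopemin}, the values $\mu_{\min}$ of strict subquotients and the notion of a semi-stable subquotient agree with their set-theoretic counterparts (Definition~\ref{defn:slope min set}). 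So it is enough to build the filtration inside $(\mathcal F(E),\Gamma)$ and read it back.

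Next I would construct the filtration recursively. Put $E_0=\boldsymbol 0$; given $E_{i-1}\in\Gamma$ with $E_{i-1}\neq\mathcal F(E)$, the pair $(E_{i-1},\mathcal F(E))$ lies in $\mathcal S^{'0}_{\mathcal F(E)}$, so Lemma~\ref{main2} produces $E_i\in\Gamma$ with $E_{i-1}\subsetneq E_i\subseteq\mathcal F(E)$ such that $E_i/E_{i-1}$ is semi-stable and such that every $G\in\Gamma$ with $E_{i-1}\subsetneq G\subseteq\mathcal F(E)$ and $\mu_{\min}(G/E_{i-1})\geqslant\mu_{\min}(E_i/E_{i-1})$ satisfies $G\subseteq E_i$; moreover $\mu_{\min}(E_i/E_{i-1})$ is maximal among all $\mu_{\min}(W/E_{i-1})$ with $E_{i-1}\subsetneq W\subseteq\mathcal F(E)$. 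If $E_{i-1}=\mathcal F(E)$ I stop and set $n=i-1$. The $E_i$ form a strictly increasing chain in $\Gamma$, so the Noetherian condition forces the process to terminate; since $E\neq\boldsymbol 0$ we obtain $n\geqslant1$ and $E_n=\mathcal F(E)$, and reading back through $\mathcal{\tilde F}$ gives a filtration $\boldsymbol 0=E_0\rightarrowtail\cdots\rightarrowtail E_n=E$ by proper strict monomorphisms. Property~(1) and the first assertion of property~(2) are precisely the two conclusions of Lemma~\ref{main2}, the condition ``$G\subseteq E_i$'' translating into ``$G_i\rightarrowtail E$ factorizes through $E_i\rightarrowtail E$'' by Example~\ref{eg.admissible}(\ref{Item. proto}); the remaining ``in particular'' clauses of (2) follow from the maximality statement of Lemma~\ref{main2} and the absorbing property, since any such $F_i$ is forced into $E_i$ and hence its subquotient is a sub-object of the semi-stable $E_i/E_{i-1}$.

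For property~(3) I would first establish a minimal-slope see-saw inequality: for $A\subseteq H\subseteq B$ in $\Gamma$ one has $\mu_{\min}(B/A)\geqslant\inf\{\mu_{\min}(H/A),\mu_{\min}(B/H)\}$. The proof is a short case analysis: for $F\in\Gamma$ with $A\subseteq F\subsetneq B$, set $G=\sup\{F,H\}$; if $H\subseteq F$ then $G=F$ and Proposition~\ref{Pro: minimal slope inequality} gives $\mu(B/F)\geqslant\mu_{\min}(B/H)$, while if $H\not\subseteq F$ the strong slope inequality applied to the pairs $(\inf\{F,H\},H)$ and $(F,G)$, and then to $(F,G)$ and $(F,B)$, gives $\mu(B/F)\geqslant\mu(H/\inf\{F,H\})\geqslant\mu_{\min}(H/A)$; taking the infimum over $F$ yields the claim. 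Now fix $i\in\{1,\dots,n-1\}$. Applying the see-saw with $(A,H,B)=(E_{i-1},E_i,E_{i+1})$ gives $\mu_{\min}(E_{i+1}/E_{i-1})\geqslant\inf\{\mu_{\min}(E_i/E_{i-1}),\mu_{\min}(E_{i+1}/E_i)\}$. On the other hand $E_{i+1}\not\subseteq E_i$, so the absorbing property of the construction rules out $\mu_{\min}(E_{i+1}/E_{i-1})\geqslant\mu_{\min}(E_i/E_{i-1})$. If $\mu_{\min}(E_i/E_{i-1})\leqslant\mu_{\min}(E_{i+1}/E_i)$ held, the infimum above would equal $\mu_{\min}(E_i/E_{i-1})$, contradicting the previous sentence; hence $\mu_{\min}(E_i/E_{i-1})\not\leqslant\mu_{\min}(E_{i+1}/E_i)$, which in the totally ordered case is the strict inequality $\mu_{\min}(E_i/E_{i-1})>\mu_{\min}(E_{i+1}/E_i)$.

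The main obstacle I expect is twofold. One part is the see-saw lemma, a genuine (if brief) argument with the strong slope inequality and Proposition~\ref{Pro: minimal slope inequality}, which is the one non-formal input to property~(3). The other is the careful bookkeeping of the first step: verifying that semi-stability of subquotients, the invariants $\mu_{\min}$, and the Noetherian/Artinian conditions all transport correctly between the category $\mathcal C$ and the set model through $\mathcal{\tilde F}$ and the induced-slope-function constructions. Once these are settled, the recursion and the termination via the Noetherian condition are routine.
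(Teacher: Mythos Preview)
Your proposal is correct and follows the paper's own route: both transport to the set model, apply Lemma~\ref{main2} recursively (terminating by the Noetherian condition) to obtain (1) and (2), and derive (3) by the same case split on whether $E_i\subseteq U$ or not, using the strong slope inequality and Proposition~\ref{Pro: minimal slope inequality} to force $\mu_{\min}(E_{i+1}/E_{i-1})\geqslant\mu_{\min}(E_i/E_{i-1})$ and contradict the absorbing property. The only cosmetic difference is that you package this case analysis as a standalone ``see-saw'' inequality (and apply the strong slope inequality twice via $G=\sup\{F,H\}$ where a single direct application to $(\inf\{F,H\},H)$ and $(F,B)$ would suffice), whereas the paper runs the same computation inline under the contradiction hypothesis.
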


We prove Theorem \ref{existence} in a more general setting.
\begin{thm}\label{existence in set}
Let $\Gamma$ be a bounded lattice and $E$ be the largest element of $\Gamma$. Assume that $\Gamma$ satisfies Noetherian and Artinian conditions, and let $\mu:\mathcal S^{'0}_\Gamma\rightarrow \Lambda$ be a slope function satisfying the strong slope inequality. Then
there exists a filtration
\[\boldsymbol{0}=E_0< E_1< \cdots\cdots< E_n=E,\]
which satisfies the following properties:
\begin{enumerate}
\item\label{Item: semi-stable subquotient in set} For any $i\in\{1,\ldots,n\}$, $E_{i}/E_{i-1}$ is semi-stable.
\item\label{Item: mu min is maximal element in set} For any element $G_i\in \Gamma$ satisfying  $E_{i-1} < G_i$, if $\mu_{\min}(G_i/E_{i-1})\geqslant\mu_{\min}(E_i/E_{i-1})$, then $G_i\leqslant E_i$.
In particular, each $\mu_{\min}(E_i/E_{i-1})$ is a maximal element of the set of  $\mu_{\min}(F_i/E_{i-1})$ where $E_{i-1}< F_i$, which means for any $F_i$ satisfying $E_{i-1}< F_i$, one has $\mu_{\min}(E_i/E_{i-1})\not\leqslant\mu_{\min}(F_i/E_{i-1})$.

\item One has
\begin{equation}\label{Equ: not increasing in set}
\forall\,i\in\{1,\ldots,n-1\},\quad\mu_{\min}(E_i/E_{i-1})\not\leqslant
\mu_{\min}(E_{i+1}/E_{i}).\end{equation}
In particular, if $\Lambda$ is a totally ordered set, then the following inequalities hold:
\[\mu_{\min}(E_1/E_0)>\mu_{\min}(E_2/E_1)>\ldots>\mu_{\min}(E_n/E_{n-1}).\]
\end{enumerate}
\end{thm}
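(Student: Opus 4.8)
The plan is to construct the filtration recursively from the bottom using Lemma \ref{main2}. I would set $E_0=\boldsymbol{0}$ and, given a chain $\boldsymbol{0}=E_0\subsetneq E_1\subsetneq\cdots\subsetneq E_i$ of elements of $\Gamma$ with $E_i\neq E$, apply Lemma \ref{main2} to the pair $(E_i,E)\in S^{'0}_E$ to obtain $E_{i+1}\in\Gamma$ with $E_i\subsetneq E_{i+1}\subseteq E$, such that $E_{i+1}/E_i$ is semi-stable and every $G\in\Gamma$ with $E_i\subsetneq G\subseteq E$ and $\mu_{\min}(G/E_i)\geqslant\mu_{\min}(E_{i+1}/E_i)$ satisfies $G\subseteq E_{i+1}$. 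Each step strictly enlarges the current top, so the Noetherian condition on $E$ forces the procedure to terminate; since it continues as long as the top differs from $E$, the final top is $E$, and we obtain a filtration $\boldsymbol{0}=E_0\subsetneq E_1\subsetneq\cdots\subsetneq E_n=E$.

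Properties \ref{Item: semi-stable subquotient in set} and \ref{Item: mu min is maximal element in set} are then read off the construction: $E_i/E_{i-1}$ is semi-stable by Lemma \ref{main2}, and the saturation statement is exactly condition (2) of Lemma \ref{main2} for the pair $(E_{i-1},E)$. For the ``in particular'' part, if $F_i\in\Gamma$ satisfies $E_i\subsetneq F_i$ and $\mu_{\min}(E_i/E_{i-1})\leqslant\mu_{\min}(F_i/E_{i-1})$, then $E_{i-1}\subsetneq F_i$, so saturation forces $F_i\subseteq E_i$, contradicting $E_i\subsetneq F_i$; maximality of $\mu_{\min}(E_i/E_{i-1})$ among the values $\mu_{\min}(F/E_{i-1})$ with $E_{i-1}\subsetneq F$ follows by combining this with the semi-stability of $E_i/E_{i-1}$ (which rules out $\mu_{\min}(F/E_{i-1})>\mu_{\min}(E_i/E_{i-1})$ when $F\subseteq E_i$) and with Proposition \ref{main} applied to $H=E_{i-1}$, $V=E_i$, $W=F$ when $F\not\subseteq E_i$ (which would force $\sup\{E_i,F\}\subseteq E_i$, a contradiction).

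The substantive step is property \ref{Item: not increasing in set}. First I would establish the unconditional inequality
\[\mu_{\min}(E_{i+1}/E_{i-1})\;\geqslant\;\inf\{\mu_{\min}(E_i/E_{i-1}),\,\mu_{\min}(E_{i+1}/E_i)\}.\]
For any $F\in\Gamma$ with $E_{i-1}\subseteq F\subsetneq E_{i+1}$ there are two cases. If $E_i\subseteq F$, then $\mu(E_{i+1}/F)\geqslant\mu_{\min}(E_{i+1}/E_i)$ by Definition \ref{defn:slope min set}. If $E_i\not\subseteq F$, then $\inf\{F,E_i\}\subsetneq E_i$ while $E_{i-1}\subseteq\inf\{F,E_i\}$; since $F$ and $E_i$ are subsets of $E_{i+1}$ and $\sup\{E_i,F\}\subseteq E_{i+1}$, the strong slope inequality of Definition \ref{defn:slope inequality set} yields $\mu(E_i/\inf\{F,E_i\})\leqslant\mu(E_{i+1}/F)$, and $\mu(E_i/\inf\{F,E_i\})\geqslant\mu_{\min}(E_i/E_{i-1})$ by Definition \ref{defn:slope min set}. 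In both cases $\mu(E_{i+1}/F)$ is at least the right-hand infimum, and taking the infimum over $F$ gives the claim (this is essentially the argument behind Proposition \ref{main}). Now suppose for contradiction that $\mu_{\min}(E_i/E_{i-1})\leqslant\mu_{\min}(E_{i+1}/E_i)$; then the right-hand infimum equals $\mu_{\min}(E_i/E_{i-1})$, so $\mu_{\min}(E_{i+1}/E_{i-1})\geqslant\mu_{\min}(E_i/E_{i-1})$, and Lemma \ref{main2} applied to $(E_{i-1},E)$ with $G=E_{i+1}$ forces $E_{i+1}\subseteq E_i$, contradicting $E_i\subsetneq E_{i+1}$. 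Hence $\mu_{\min}(E_i/E_{i-1})\not\leqslant\mu_{\min}(E_{i+1}/E_i)$, which becomes the asserted strictly decreasing chain when $\Lambda$ is totally ordered.

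I expect the main obstacle to be property \ref{Item: not increasing in set}: one must feed the strong slope inequality exactly the right quadruple of subsets and stay alert to the fact that $\Lambda$ is only partially ordered, so that ``maximal'' is weaker than ``maximum'' and $\not\leqslant$ is weaker than $>$; every place where one is tempted to ``compare'' two incomparable slopes must instead be routed through an infimum. Finally, Theorem \ref{existence} is deduced from Theorem \ref{existence in set} by transporting everything along the equivalence $S_E\simeq S'_{\mathcal F(E)}$ of Corollary \ref{cor:set and category}, the collection $\Gamma=\{\mathcal F(\eta)\}$ being admissible by Example \ref{eg.admissible}.
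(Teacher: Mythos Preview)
Your proof is correct and follows essentially the same approach as the paper: recursive construction via Lemma \ref{main2}, termination by the Noetherian condition, and for part \eqref{Equ: not increasing in set} the same two-case analysis (whether $E_i\subseteq F$ or not) using the strong slope inequality to deduce $\mu_{\min}(E_{i+1}/E_{i-1})\geqslant\mu_{\min}(E_i/E_{i-1})$ and reach a contradiction with the saturation property of $E_i$. The only cosmetic difference is that you first isolate the inequality $\mu_{\min}(E_{i+1}/E_{i-1})\geqslant\inf\{\mu_{\min}(E_i/E_{i-1}),\mu_{\min}(E_{i+1}/E_i)\}$ unconditionally before invoking the contradiction hypothesis, whereas the paper folds that hypothesis into each case from the start.
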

\begin{proof}

Note that  $\Gamma$ satisfies both Noetherian and Artinian conditions, thus Lemma \ref{main2} allows us to construct recursively a filtration
\[\boldsymbol{0}=E_0< E_1< \cdots\cdots< E_n=E,\]
where $\boldsymbol{0}$ is the smallest element of $\Gamma$,
 and the filtration satisfies the properties (\ref{Item: semi-stable subquotient in set}) and  (\ref{Item: mu min is maximal element in set}). Note that the recursive procedure terminates in finite steps, thanks to the Noetherian condition on $\Gamma$.

To show the inequality \eqref{Equ: not increasing in set}, by replacing $E/E_0$ by $E/E_{i-1}$, we may assume without loss of generality that $i=1$. 
We only need to show $\mu_{\min}(E_1/E_{0})\not\leqslant
\mu_{\min}(E_{2}/E_{1})$. Suppose by contradiction that
$\mu_{\min}(E_1/E_0)\leqslant\mu_{\min}(E_2/E_{1})$ holds.
  Let $U$ be an element of $\Gamma$ satisfying $E_0\leqslant U< E_2$. If $E_1\leqslant U$, by Proposition \ref{Pro: minimal slope inequality}, one has \[\mu_{\min}(E_2/E_1)\leqslant\mu_{\min}(E_2/U)\leqslant \mu(E_2/U).\]
  Thus, $\mu_{\min}(E_1/E_0)\leqslant \mu(E_2/U)$. If $E_1\not\leqslant U$, then  $U\wedge E_1< E_1$.  By the strong slope inequality, we obtain that
\[\mu(E_1/(U\wedge E_1))\leqslant\mu(E_2/U),\]
which still leads to $\mu_{\min}(E_1/E_0)\leqslant\mu(E_2/U)$. Taking the infimum with respect to $U$, we obtain that the inequality $\mu_{\min}(E_1/E_0)\leqslant\mu_{\min}(E_2/E_0)$ holds, which leads to a contradiction.
\end{proof}

\begin{defn}\label{ HN filtration}
Let $E$ be a non-zero object of the proto-abelian category $\mathcal{C}$,  $\mu:S^0_E\rightarrow \Lambda$ be a slope function, where $\Lambda$ is a partially ordered set satisfying assumption \ref{assumption}. Let
 \[\boldsymbol{0}=E_0\rightarrowtail E_1\rightarrowtail \cdots \rightarrowtail E_n=E\]
 be a filtration of $E$ such that $E_1/E_0,\ldots,E_{n}/E_{n-1}$ are semi-stable and
\[\mu_{\min}(E_1/E_0)>\mu_{\min}(E_2/E_1)>\ldots>\mu_{\min}(E_n/E_{n-1}).\]
We call the filtration
\[\boldsymbol{0}=E_0\rightarrowtail E_1\rightarrowtail \cdots \rightarrowtail E_n=E\]
 a Harder-Narasimhan filtration of $E$.

\end{defn}
\begin{eg}

In this example, we explain that the condition "strong slope inequality" in Theorem \ref{existence}(2) is necessary. Recall Examples \ref{examples} (\ref{item: Counterexample slope function}), we defined a proto-abelian category $\mathcal C$.
\[\xymatrix{
                &\boldsymbol{ 0} \ar[dl]\ar[d]\ar[dr] &            \\
 L  \ar@{>->}[r]_-f &M\ar@{->>}[r]_-g &N             }\]
 We define a function $\mu:\mathcal S^0_M\rightarrow \mathbb{R}$ by $\mu(L)=2, \mu(M)=1$ and $\mu(N)=3$. Let $W, W'$ be strict sub-objects of $M$, thus $(W'+W)/W'$ is isomorphic to $W/(W'\cap W)$ by enumerating all possible situations.  Therefore, $\mu$ satisfies the slope inequality. However, notice that $0+L=L$ is a strict sub-object of $M$ and $\mu(M)<\mu(L)$, $\mu$ does not satisfy the strong slope inequality. Observe that both $\mu_{\min}(M)=1$ and $\mu_{\min}(L)=2$ hold, so $M$ is not semi-stable. The only filtration could be a Harder-Narasimhan filtration is
 \[\boldsymbol{0}\rightarrowtail L\rightarrowtail M.\]
  However, note that $\hat{\mu}_1=\mu_{\min}(L)=2$ and $\hat{\mu}_2=\mu_{\min}(M/L)=3$, one has $\hat{\mu}_2>\hat{\mu}_1$ which means the filtration does not satisfy the decreasing condition in Theorem \ref{existence} (2).
\end{eg}

\begin{defn}\label{ HN filtration in set}
Let $\Gamma$ be a bounded lattice, $E$ be the largest element of $\Gamma$, and $\mu:\mathcal S^{'0}_\Gamma\rightarrow \Lambda$ be a slope function, where $\Lambda$ is a partially ordered set satisfying assumption \ref{assumption}. Let
 \[\boldsymbol{0}=E_0< E_1< \cdots< E_n=E\]
 be a filtration of $E$ such that $E_1/E_0,\ldots,E_{n}/E_{n-1}$ are semi-stable and
\[\mu_{\min}(E_1/E_0)>\mu_{\min}(E_2/E_1)>\ldots>\mu_{\min}(E_n/E_{n-1}).\]
We call the filtration
\[\boldsymbol{0}=E_0< E_1< \cdots< E_n=E\]
 a Harder-Narasimhan filtration of $E$.

\end{defn}

\begin{thm}\label{Uniqueness}(Uniqueness of Harder-Narasimhan filtration)
Let $E$ be a non-zero object in a small proto-abelian category $\mathcal{C}$ satisfying Noetherian and Artinian conditions, $(\Lambda,\leqslant)$ be a \emph{totally ordered} set which satisfies Assumption \ref{assumption}, and $\mu:\mathcal S^0_E\rightarrow \Lambda$ be a slope function. If there exists a Harder-Narasimhan filtration of $E$, it is unique up to isomorphism.
\end{thm}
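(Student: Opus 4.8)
The plan is to show that the first step of any Harder--Narasimhan filtration is the maximal destabilising strict sub-object, exactly as in the construction behind Theorem~\ref{existence}, but to deduce this characterisation directly from the two defining properties of a Harder--Narasimhan filtration (semi-stable subquotients and strictly decreasing minimal slopes), so that no strong slope inequality is invoked. The core is the following \emph{mapping lemma}: \emph{if $\boldsymbol 0=E_0\rightarrowtail E_1\rightarrowtail\cdots\rightarrowtail E_n=E$ is a Harder--Narasimhan filtration and $A\rightarrowtail E$ is a non-zero strict sub-object, and if $j$ is the least index for which $A\rightarrowtail E$ factorizes through $E_j\rightarrowtail E$, then $\mu_{\min}(A)\leqslant\mu_{\min}(E_j/E_{j-1})$.} Since $\Lambda$ is totally ordered and the slopes $\mu_{\min}(E_i/E_{i-1})$ are strictly decreasing, this yields at once: (C1) every non-zero strict sub-object $A\rightarrowtail E$ satisfies $\mu_{\min}(A)\leqslant\mu_{\min}(E_1)$, so $\mu_{\min}(E_1)$ is the maximal value of $\mu_{\min}$ on non-zero strict sub-objects; and (C2) if in addition $\mu_{\min}(A)\geqslant\mu_{\min}(E_1)$, then necessarily $j=1$, i.e.\ $A$ factorizes through $E_1$.

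I would prove the mapping lemma by induction on $n$. For $n=1$, or for $n\geqslant 2$ and $j=1$, it is just semi-stability of $E_1$ (using the total order on $\Lambda$). For $n\geqslant 2$ and $j\geqslant 2$, put $\pi\colon E\twoheadrightarrow\bar E:=E/E_1$ and $\bar A:=(A+E_1)/E_1$, and note that $E_2/E_1\rightarrowtail\cdots\rightarrowtail\bar E$ is a Harder--Narasimhan filtration of $\bar E$ of length $n-1$ (the induced slope function on each $(E_{i+1}/E_1)/(E_i/E_1)\cong E_{i+1}/E_i$ agrees with the one induced from $E$). The object $\bar A$ is a non-zero strict sub-object of $\bar E$: indeed $A+E_1\rightarrowtail E$ is a strict monomorphism (Proposition~\ref{properties}), so is $E_1\rightarrowtail A+E_1$, and the induced morphism $(A+E_1)/E_1\rightarrow E/E_1$ is a kernel of the canonical factorization $E/E_1\rightarrow E/(A+E_1)$ — hence a strict monomorphism, as one sees by pulling back along $\pi$ and using Lemma~\ref{Lem: kernel cokernel} and axiom~\ref{Axiom c} — and it is non-zero because $A$ does not factorize through $E_1$. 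The least index $i$ for which $\bar A$ factorizes through $E_{i+1}/E_1$ is $j-1$, so the inductive hypothesis gives $\mu_{\min}(\bar A)\leqslant\mu_{\min}\big((E_j/E_1)/(E_{j-1}/E_1)\big)=\mu_{\min}(E_j/E_{j-1})$. It therefore suffices to prove $\mu_{\min}(A)\leqslant\mu_{\min}(\bar A)$. By Remark~\ref{slopemin} applied to $\bar A$ (strict sub-objects of $\bar A$ corresponding to strict sub-objects $U$ of $A+E_1$ containing $E_1$) this means $\mu_{\min}(A)\leqslant\mu\big((A+E_1)/U\big)$ for every such proper $U$. Now $A$ does not factorize through $U$ (otherwise $A+E_1\rightarrowtail U$), so the pull-back $A\times_E U\rightarrowtail A$ is not an isomorphism and $\mu_{\min}(A)\leqslant\mu(A/(A\times_E U))$; and since $E_1\subseteq U\subseteq A+E_1$ forces $A+U=A+E_1$, the slope inequality of Definition~\ref{Def: slope function}, applied to the cartesian square with vertices $A\times_E U,\,A,\,U,\,A+E_1$ (whose induced monomorphism $A+U\rightarrowtail A+E_1$ is an isomorphism), gives $\mu(A/(A\times_E U))\leqslant\mu((A+E_1)/U)$. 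Taking the infimum over $U$ finishes the induction.

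Uniqueness then follows by induction on $n$. Let $\boldsymbol 0=E_0\rightarrowtail\cdots\rightarrowtail E_n=E$ and $\boldsymbol 0=F_0\rightarrowtail\cdots\rightarrowtail F_m=E$ be two Harder--Narasimhan filtrations. Applying (C1) to the first with $A=F_1$ and to the second with $A=E_1$ gives $\mu_{\min}(E_1)=\mu_{\min}(F_1)$; then (C2) applied to each filtration produces morphisms $F_1\rightarrowtail E_1$ and $E_1\rightarrowtail F_1$ over $E$, which are mutually inverse since the structural monomorphisms into $E$ are monic, so $E_1\cong F_1$ as strict sub-objects of $E$. If $n=1$ this forces $F_1=E$ and $m=1$. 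If $n\geqslant 2$, then $E/E_1=E/F_1$ — a non-zero object which again satisfies the Noetherian and Artinian conditions by Remark~\ref{rem: noetherian artinian} — carries the two induced Harder--Narasimhan filtrations $E_\bullet/E_1$ and $F_\bullet/F_1$, of lengths $n-1$ and $m-1$; by the inductive hypothesis these are isomorphic, whence $n=m$, and pulling the isomorphisms back along $\pi$ (using $E_i=\pi^{-1}(E_i/E_1)$ and $F_i=\pi^{-1}(F_i/F_1)$) yields compatible isomorphisms $E_i\cong F_i$ over $E$ with $E_n\rightarrow F_n$ the identity of $E$.

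The delicate point throughout is the handling of quotients in a proto-abelian category: the image of a strict sub-object $A\rightarrowtail E$ in a quotient $E/E_1$ is in general only a monomorphism, not a strict one, because the canonical morphism from a coimage to an image need not be an isomorphism. This is why the argument is routed through $A+E_1$, whose reduction $(A+E_1)/E_1$ genuinely is a strict sub-object of $E/E_1$, and why the slope comparison is arranged so that the identity $A+U=A+E_1$ makes the \emph{weak} slope inequality applicable — the strong slope inequality of Proposition~\ref{main} and Theorem~\ref{existence} is unavailable here, Theorem~\ref{Uniqueness} assuming only that $\mu$ is a slope function.
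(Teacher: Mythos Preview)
Your proof is correct, and the decisive slope computation is the very same one the paper uses: for a strict sub-object $A$ not contained in some filtration step $F_{j-1}$, one ranges over $U$ with $F_{j-1}\subseteq U\subsetneq A+F_{j-1}$, observes that $A+U=A+F_{j-1}$ so that the \emph{weak} slope inequality applies to the cartesian square on $A\times_E U,\,A,\,U,\,A+F_{j-1}$, and infers $\mu_{\min}(A)\leqslant\mu_{\min}\big((A+F_{j-1})/F_{j-1}\big)$, which semi-stability of $F_j/F_{j-1}$ then caps by $\mu_{\min}(F_j/F_{j-1})$.

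The organisation, however, is genuinely different. The paper argues by contradiction in one shot: let $k$ be the first index where the two filtrations differ, take $A=E_k$ and let $j$ be the least index with $E_k\subseteq F_j$, run the computation above, and use the strictly decreasing slopes to force $j=k$; symmetrising gives $E_k=F_k$, a contradiction. You instead isolate a reusable \emph{mapping lemma} --- for any non-zero $A$, $\mu_{\min}(A)\leqslant\mu_{\min}(E_j/E_{j-1})$ with $j$ minimal --- and prove it by induction on the length of the filtration, descending to the quotient $E/E_1$ and invoking the compatibility of induced slope functions (Remark~\ref{Rem: induced slope}). From the lemma you read off that $E_1$ is the unique maximal destabilising sub-object, match $E_1\cong F_1$, and then finish by induction on the quotient. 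What your route buys is modularity (the mapping lemma is a clean stand-alone statement, essentially the analogue of ``$\mu_{\max}$ bounds slopes of sub-objects'' in the classical theory) and an explicit destabilising characterisation of each step; what the paper's route buys is brevity --- no passage to quotients and no verification that $(A+E_1)/E_1$ is a strict sub-object of $E/E_1$ (which you handle correctly via axiom~\ref{Axiom c}). Both arguments rely only on the weak slope inequality, consistent with the hypotheses of Theorem~\ref{Uniqueness}.
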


We prove Theorem \ref{Uniqueness} in a more general setting.
\begin{thm}\label{Uniqueness in set}
Let $\Gamma$ be a bounded lattice, $E$ be the largest element of $\Gamma$, $(\Lambda,\leqslant)$ be a \emph{totally ordered} set which satisfies Assumption \ref{assumption}, and $\mu:\mathcal S^{'0}_\Gamma\rightarrow \Lambda$ be a slope function of $\Gamma$. Assume that $\Gamma$  satisfies Noetherian and Artinian conditions.  If there exists a Harder-Narasimhan filtration of $E$, it is unique.
\end{thm}
\begin{proof}
Let
 \[\boldsymbol{0}=E_0< E_1< \cdots< E_n=E\quad
\text{and}\quad
\boldsymbol{0}=F_0< F_1< \cdots< F_m=E\]
be two Harder-Narasimhan filtrations of $E$.
Suppose that $E_j\neq F_j$ for some $j$ and let $k$ be the smallest of such $j$. Since $\Lambda$ is a totally ordered set, one has either $\mu_{\min}(E_k/E_{k-1})\leqslant\mu_{\min}(F_k/F_{k-1})$ or $\mu_{\min}(F_k/F_{k-1})\leqslant\mu_{\min}(E_k/E_{k-1})$ holds. 
Without loss of generality, we may suppose that $\mu_{\min}(F_k/F_{k-1})\leqslant\mu_{\min}(E_k/E_{k-1})$. Let $j\in\{k,\ldots,m\}$ be the least index such that $E_k\leqslant F_j$, and $U$ be an element in $\Gamma$ satisfying $F_{j-1}\leqslant U< F_{j-1}\vee E_k$. Thus $E_k\wedge U< E_k$. Moreover, since $E_{k-1}=F_{k-1}\leqslant F_{j-1}\leqslant U$, one has $E_{k-1}\leqslant E_k\wedge U$. By definition, one has
 \[\mu_{\min}(E_k/E_{k-1})\leqslant \mu(E_k/(E_k\wedge U)).\]
 Observe that $F_{j-1}\vee E_k=U\vee E_k$, using the slope inequality, one finds that
 \[\mu(E_k/(E_k\wedge U))\leqslant \mu((F_{j-1}\vee E_k)/U).\]
  Taking the infimum with respect to $U$, one has
 \[\mu_{\min}(E_k/E_{k-1})\leqslant \mu_{\min}((F_{j-1}\vee E_k)/F_{j-1}).\]
Since $F_j/F_{j-1}$ is semi-stable,
\[\mu_{\min}(E_k/E_{k-1})\leqslant \mu_{\min}((F_{j-1}\vee E_k)/F_{j-1})\leqslant \mu_{\min}(F_j/F_{j-1}).\]
If $j>k$, then by definition of Harder-Narasimhan filtration,  
\[\mu_{\min}(F_j/F_{j-1})<\mu_{\min}(F_k/F_{k-1}).\] 
Thus  $\mu_{\min}(E_k/E_{k-1})< \mu_{\min}(F_k/F_{k-1})$, which contradicts the assumption that $\mu_{\min}(F_k/F_{k-1})\leqslant \mu_{\min}(E_k/E_{k-1})$. Hence $j=k$ which implies that $E_k\leqslant F_k$ and $\mu_{\min}(E_k/E_{k-1})=\mu_{\min}(F_k/F_{k-1})$. Switching the two filtrations, by applying the above argument we obtain that $F_k= E_k$ which is opposite to the assumption.
Therefore, $n=m$ and $E_i=F_i$ for any $1\leqslant i\leqslant n$.

\end{proof}

\begin{rmk}
In this remark, we give an example that shows that the total order assumption is necessary in Theorem \ref{Uniqueness}. Let $\mathcal{C}$ be the category of finite abelian groups, which is an abelian category, $E$ be a nonzero finite abelian group, $\Lambda$ be the partially ordered set defined as follows:

$\Lambda=\{A: A\subseteq \mathbb{N}\}$ and
$A\leq B \Longleftrightarrow A\supseteq B$.\\
 We define a map $\mu: \mathcal S^0_E\rightarrow \Lambda$ as follows:
 \[\mu(\xymatrix{W'\ar@{ >->}[r]&W\ar@{ >->}[r]&E})=\begin{cases}
 \{p\}, & \text{if $ \ord(W/W')=p^n$},\\
 \emptyset, & \text{otherwise},
 \end{cases}
  \]
  where $p$ is a prime, $\ord(W/W')$ is the order of the finite group $W/W'$.
 Note that for any finite abelian group $F$,
 \[\mu_{\min}(F)=\{ p: \text{ $p|\ord(F)$}\}.\]
  Therefore, the nonzero finite abelian group $F$ is semi-stable if and only if $\ord(F)=p^n$.
   Assume that $\ord(E)$ is not a power of a prime, one has $\mu_{\min}(E)=\{p_1,\cdots, p_m\}$ where $m\geq 2$.  By the structure theorem of finite abelian group, $E$ is isomorphic to $\bigoplus_{k=1}^m E_{p_k}$, where $E_{p_k}$ is the Sylow $p_k$-subgroup of $E$. Hence the following filtrations,
   \[0\rightarrowtail E_{p_1}\rightarrowtail E_{p_1}\bigoplus E_{p_2} \rightarrowtail\cdots \rightarrowtail E\quad
\text{and}\quad
0\rightarrowtail E_{p_2}\rightarrowtail E_{p_1}\bigoplus E_{p_2}\rightarrowtail \cdots\rightarrowtail E,\]
   are not isomorphic because $E_{p_1}$ is not isomorphic to $E_{p_2}$.
\end{rmk}

\begin{eg}
Let $E$ be a  Hermitian space, $\mathscr{A}$ be a  Hermitian transform on $E$, $h$ be the minimal polynomial of $\mathscr{A}$ and $\mathcal{C}_h$ be the proto-abelian category defined in Examples \ref{examples} (\ref{Item: spectral decomposition}).
 We define the slope function $\mu:\mathcal S^0_{(E, \mathscr{A})}\rightarrow \mathbb{R}$ as follows, for any non-zero strict subquotient $(H, \mathscr{A}_H)$ of $(E, \mathscr{A})$,
  \[\mu(H, \mathscr{A}_H)=\max\{\lambda: \text{$\lambda$ is an eigenvalue of $\mathscr{A}_H$ on H}\}.\]
  Hence $\mu$ satisfies the strong slope inequality by definition. Observe that
  \begin{equation*}
  \begin{aligned}
  \mu_{\min}(H, \mathscr{A}_H)=&\inf\{\mu(G,\mathscr{A}_G): \text{$(G,\mathscr{A}_G)$ is a non-zero strict quotient of $(H,\mathscr{A}_H)$}\}\\
  =&\min\{\lambda: \text{$\lambda$ is an eigenvalue of $\mathscr{A}_H$ on $H$}\}. \\
  \end{aligned}
  \end{equation*}
  Hence the object $(H,\mathscr{A}_H)$ is semi-stable if and only if $\mathscr{A}_H$ has a unique eigenvalue on $H$.
   Let $\{\lambda_1,\cdots,\lambda_n\}$ be the set of eigenvalues of $\mathscr{A}$, where $\lambda_1>\cdots>\lambda_n$,
  and let
  \[\xymatrix{\boldsymbol{0}=(E_0, \mathscr{A}_{E_0})\ar@{ >->}[r]& (E_1, \mathscr{A}_{E_1})\ar@{ >->}[r]&\cdots\cdots\ar@{ >->}[r]& (E_n, \mathscr{A}_{E_n})=(E, \mathscr{A}),}\]
  be the Harder-Narasimhan filtration of $E$, where $\hat{\mu}_i=\mu_{\min}(E_i/E_{i-1}, \mathscr{A}_{E_i/E_{i-1}})$.
   One has, for any $1\leq i\leq n$, $\lambda_i=\hat{\mu}_i$ and  \[E_i=\bigoplus_{j\leqslant i} V_j,\] where $V_j$ is the eigenspace associated with the eigenvalue $\lambda_j$.
  Therefore, the Harder-Narasimhan filtration of $(E, \mathscr{A})$ is the spectral decomposition of the Hermitian transform $\mathscr{A}$.
\end{eg}
\subsection{Compatibility}
In this subsection, we want to make a comparison between the Harder-Narasimhan filtration constructed in Theorem \ref{existence} and that in the classical articles such as \cite{MR2571693} where the slope function is defined as a ratio of a degree function and a rank function in the frame of proto-abelian category.
\begin{defn}
A \emph{rank function} on a small category $\mathcal{C}$ is a function\\
\[\rk:Obj(\mathcal{C})\rightarrow \mathbb{N}\]
where $\mathbb{N}$ is the set of nonnegative integers, satisfying the following conditions:
\begin{enumerate}
 \item $\rk(X)=\rk(X')$ if $X$ and $X'$ are isomorphic.
 \item $\rk(L)+\rk(N)=\rk(M)$ for any short exact sequence $0\rightarrow L\rightarrow M\rightarrow N\rightarrow 0$.
 \item $\rk(X)=0$ if and only if $X$ is a zero object in $\mathcal{C}$.
 \end{enumerate}
\end{defn}

 Let $\mathcal{C}$ be a small proto-abelian category, and assume that there is a rank function on $\mathcal{C}$. 
 If $E$ is an object of $\mathcal{C}$,  then $E$ satisfies both Noetherian and Artianian conditions.\\

\begin{defn}
Let $\mathcal{C}$ be a small category. A \emph{degree function} on $\mathcal{C}$ is a function\\
\[\deg:Obj(\mathcal{C})\rightarrow \mathbb{R},\]
 satisfying the following conditions:
\begin{enumerate}
\item $\deg(X)=\deg(X')$ if $X$ and $X'$ are isomorphic.
\item $\deg(M)=\deg(L)+\deg(N)$ for any  short exact sequence $0\rightarrow L\rightarrow M\rightarrow N\rightarrow 0$.
\end{enumerate}
\end{defn}

Let $\mathcal{C}$ be a small proto-abelian category, $\rk$ be a rank function on $\mathcal{C}$.
The slope function is defined by
\[\mu(M)=\frac{\deg(M)}{\rk(M)},\]
for any nonzero object $M\in Obj(\mathcal{C})$. Moreover, we assume that
\begin{equation} \label{Equ: Assumption}
\mu(N/(N\cap H))\leqslant \mu((N+H)/H),
\end{equation}
 where $N,H$ are strict sub-objects of some object $E$.

 \begin{rmk}
In the work of Andr\'e \cite{MR2571693}, he assumes that if there exists a morphism $f:M\rightarrow N$ in a proto-abelian category $\mathcal{C}$ such that $f$ is not only an epimorphism, but also a monomorphism, then
$\mu(M)\leqslant \mu(N)$. This assumption is stronger than (\ref{Equ: Assumption}).\\
\end{rmk}

  Let $F$ be an object of $\mathcal C$. We define \[\mu_{\max}(F)=\sup_{F'\rightarrowtail F} \mu(F'),\] where $F'$ is a nonzero strict sub-object of $F$. Assume that $N,H,W$ are strict sub-objects of some object $E$, thus one has $\mu_{\max}(N/(N\cap H))\leqslant \mu_{\max}((N+H)/H)$. Moreover, $\mu_{\max}((N+H)/H)\leqslant \mu_{\max}(W/H)$ if $N+H\rightarrowtail E$ factorizes through $W\rightarrowtail E$, hence $\mu_{\max}(N/(N\cap H))\leqslant \mu_{\max}(W/H)$. We define the slope function of $E$ valued in $\mathbb{R}$ (see Definition \ref{Def: slope function}) as follows,
  \[\mu^E(\mathcal Y)=\mu_{\max}(F/F'),\]
  where $\mathcal Y=(\xymatrix{F'\ar@{ >->}[r]^-{u}&F\ar@{ >->}[r]^-{v}&E})$ is an element
in $\mathcal S^0_E$.  Therefore, $\mu^E(N/N\cap H)\leqslant \mu^E(W/H)$, which is the "strong slope inequality".

We call a strict sub-object $E'_1$ of $E$ a universal destabilizing sub-object of $E$ if the following conditions hold:
\begin{enumerate}
 \item $\mu(E_1')=\mu_{\max}(E)$.
 \item Let $H\rightarrow E$ be a strict monomorphism. If $\mu(H)=\mu(E_1')$, then $H\rightarrow E$ factorizes through $E'_1 \rightarrowtail E$.
 \end{enumerate}
The universal destabilizing sub-object always exists and is unique up to a unique isomorphism (see \cite[p. 22]{MR2571693}). Moreover, $\mu(E_1')> \mu_{\max}(E/E_1')$ holds (see \cite[p.25]{MR2571693}).

Let $V$ be a strict sub-object of $E$. We say $V$ is $\mu$-\emph{semi-stable}, if for any strict sub-object $F$ of $V$, $\mu(F)\leqslant \mu(V)$. In particular, the universal destabilizing sub-object $E_1'$ is $\mu$-semi-stable. Moreover, if $V$ is $\mu$-semi-stable, then $\mu(V/F)\geqslant \mu(V)$, where $F$ is a strict sub-object of $V$.
\begin{prop}\label{slope minima and slope}
Assume that $V$ is a strict sub-object of $E$, and it is $\mu$-semi-stable, then
\[\mu(V)=\mu^E_{\min}(V).\]
\end{prop}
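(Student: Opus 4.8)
The plan is to unwind $\mu^E_{\min}(V)$ into an infimum of classical maximal slopes of quotients of $V$, and then to pin that infimum to $\mu(V)$ from both sides, using only the classical semi-stability of $V$ together with the additivity of $\deg$ and $\rk$. First I would apply Remark \ref{slopemin} to the slope function induced by $\mu^E$ on $S^0_V$ along the strict monomorphism $V\rightarrowtail E$ (Remark \ref{Rem: induced slope}); together with the formula defining $\mu^E$ recalled just above, this gives
\[\mu^E_{\min}(V)=\inf_{F}\mu^E(F\rightarrowtail V\rightarrowtail E)=\inf_{F}\mu_{\max}(V/F),\]
where $F$ runs over the proper strict sub-objects of $V$ and $\mu_{\max}(V/F)$ is the classical maximal slope of the object $V/F$, namely the supremum of $\mu(\widetilde V/F)$ over strict sub-objects $\widetilde V$ of $V$ with $F\subsetneq\widetilde V\subseteq V$. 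Two observations will be used: the zero sub-object $\boldsymbol{0}\rightarrowtail V$ is an admissible choice of $F$ since $V$ is non-zero, and taking $\widetilde V=V$ gives $\mu_{\max}(V/F)\geq\mu(V/F)$ for every proper $F$.

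For the upper bound I would take $F=\boldsymbol{0}$, so that $\mu_{\max}(V/\boldsymbol{0})=\mu_{\max}(V)=\sup_{\boldsymbol{0}\neq V'\rightarrowtail V}\mu(V')$. Classical semi-stability of $V$ says precisely $\mu(V')\leq\mu(V)$ for every non-zero strict sub-object $V'$, and the value $\mu(V)$ is attained at $V'=V$; hence $\mu_{\max}(V)=\mu(V)$ and therefore $\mu^E_{\min}(V)\leq\mu(V)$. For the lower bound I would fix an arbitrary proper strict sub-object $F\rightarrowtail V$ and prove $\mu_{\max}(V/F)\geq\mu(V)$, for which it suffices to check $\mu(V/F)\geq\mu(V)$. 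This is an equality when $F=\boldsymbol{0}$; when $F\neq\boldsymbol{0}$, the short exact sequence $0\rightarrow F\rightarrow V\rightarrow V/F\rightarrow 0$ gives $\deg(V/F)=\deg(V)-\deg(F)$ and $\rk(V/F)=\rk(V)-\rk(F)>0$ by additivity, and clearing the positive denominators $\rk(V)$ and $\rk(V)-\rk(F)$ shows that $\mu(V/F)\geq\mu(V)$ is equivalent to $\deg(F)\,\rk(V)\leq\deg(V)\,\rk(F)$, i.e.\ to $\mu(F)\leq\mu(V)$, which holds by the classical semi-stability of $V$ applied to $F\rightarrowtail V$. Taking the infimum over $F$ yields $\mu^E_{\min}(V)\geq\mu(V)$, and combining the two bounds gives $\mu^E_{\min}(V)=\mu(V)$.

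The estimates are the standard one-line slope manipulations, so the only step that really needs care is the bookkeeping in the first paragraph: checking that the induced-slope-function formalism of Remark \ref{Rem: induced slope} genuinely reduces $\mu^E_{\min}(V)$ to $\inf_{F}\mu_{\max}(V/F)$, and that the abstract $\mu_{\max}$ of a subquotient, evaluated on subquotients of the shape $V/F$, coincides with the classical maximal-slope computation in terms of $\deg$ and $\rk$ (via the correspondence between strict sub-objects of $V/F$ and strict sub-objects of $V$ through which $F\rightarrowtail V$ factors). Once that identification is secured, nothing beyond the two specialisations above is required.
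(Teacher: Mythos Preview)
Your argument is correct and follows essentially the same route as the paper's proof: both establish $\mu^E_{\min}(V)\leq\mu^E(V)=\mu_{\max}(V)=\mu(V)$ for the upper bound, and for the lower bound both use $\mu^E(V/F)=\mu_{\max}(V/F)\geq\mu(V/F)\geq\mu(V)$ and take the infimum over proper $F$. The only difference is cosmetic: the paper simply asserts that $\mu(V/F)\geq\mu(V)$ ``because $V$ is semi-stable in classical sense,'' whereas you spell out the standard additivity computation reducing this to $\mu(F)\leq\mu(V)$; your extra bookkeeping paragraph identifying $\mu^E_{\min}(V)$ with $\inf_F\mu_{\max}(V/F)$ is likewise implicit in the paper's one-line chain of inequalities.
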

\begin{proof}
 By definition, one has
 \[\mu^E_{\min}(V)\leqslant\mu^E(V)=\mu_{\max}(V)=\mu(V).\]
 Furthermore, for any nonzero strict quotient $V/F$ of $V$,
 \[\mu^E(V/F)=\mu_{\max}(V/F)\geqslant \mu(V/F)\geqslant \mu(V).\]
 The second inequality holds because $V$ is $\mu$-semi-stable. Thus $\mu(V)\leqslant \mu^E_{\min}(V)$ by taking the infimum with respect to $F$. Therefore,  $\mu^E_{\min}(V)=\mu(V)$.
\end{proof}
In Theorem \ref{existence}, we find a Harder-Narasimhan filtration of $E$,
\[\boldsymbol{0}=E_0\rightarrowtail E_1\rightarrowtail \cdots \rightarrowtail E_n=E.\]  The strict sub-object $E_1$ of $E$ satisfies \[\mu^E_{\min}(E_1)=\sup_{W\rightarrow E} \mu^E_{\min}(W),\]
where $W$ is a nonzero strict sub-object of $E$. One could see that $\mu^E_{\min}(E_1)\leqslant \mu(E_1')$ because
 \[\mu^E_{\min}(E_1)\leqslant \mu^E(E_1)= \mu_{\max}(E_1)\leqslant\mu_{\max}(E)=\mu(E_1').\]
 On the other hand, \[\mu^E_{\min}(E_1)\geqslant \mu^E_{\min}(E_1')=\mu(E'_1),\] by Theorem \ref{existence} \ref{Item: mu min is maximal element} and Proposition \ref{slope minima and slope}. As a result,  $\mu^E_{\min}(E_1)=\mu(E_1')$.
  Since $\mu^E_{\min}(E_1)=\mu(E'_1)=\mu^E_{\min}(E_1')$, $E_1'\rightarrowtail E$ factorizes through $E_1\rightarrowtail E$ by $h_1:E'_1\rightarrowtail E_1$ by Theorem \ref{existence} \ref{Item: mu min is maximal element}. If $h_1$ is not an isomorphism,  then
  \[\mu_{\max}(E_1/E_1')\leqslant \mu_{\max}(E/E_1')<\mu(E_1'),\]
  which leads to a contradiction since \[\mu(E_1')=\mu^E_{\min}(E_1)\leqslant \mu^E(E_1/E_1')=\mu_{\max}(E_1/E_1').\]
  Therefore, $h_1$ is an isomorphism.

Let $E'_{i+1}/E'_i$ be the universal destabilizing sub-object of $E/E'_i$. One has a filtration \[\boldsymbol{0}=E'_0\rightarrowtail E'_1\rightarrowtail\cdots \rightarrowtail E'_m=E.\]
 Just replace $E_1$ by $E_{i+1}/E_i$, $E_1'$ by $E_{i+1}'/E_i'$ and use the same method, there is an isomorphism $h_{i+1}:E'_{i+1}\rightarrow E_{i+1}$ such that the following diagram commutes.
 \[\xymatrix{E'_{i+1}\ar@{ >->}[rd]\ar[rr]^-{h_{i+1}}&&E_{i+1}\ar@{ >->}[ld]\\&E}\]Therefore,
 the two filtrations
 $\bold{0}=E_0'\rightarrowtail E_1'\rightarrowtail \cdots\rightarrowtail E'_m=E $
 and  $\bold{0}=E_0\rightarrowtail E_1\rightarrowtail \cdots\rightarrowtail E_n=E $
   are isomorphic and $\mu(E'_{i+1}/E'_i)=\mu^E_{\min}(E_{i+1}/E_i)$.

   \section*{acknowledgement}
   This article is accomplished under the tutelage of Professor Huayi Chen. Firstly, I should thank him deeply for his repeatedly reading this article and ceaselessly giving me advice. The motivation of this article is also from a discuss with him. Moreover, Professor Cornut pointed out partial order on the set of strict sub-objects may help to understand the Harder-Narasimhan filtration. I am grateful for his suggestions. I would also express my gratitude for his/her careful reading and his/her suggestions for the writing of the article.

 \bibliographystyle{Plain}
 \bibliography{refer}
 \end{document}